\numberwithin{equation}{section}
\def\qed{\hfill$\Box$\vspace{12pt}}
\def\d{\mathrm{d}}
\newcommand{\E}{\mathbb{E}}
\newcommand{\R}{\mathbb{R}}
\newcommand{\lcx}{\preceq_{\mathrm{cx}}}
\renewcommand{\ge}{\geqslant}
\renewcommand{\le}{\leqslant}
\renewcommand{\geq}{\geqslant}
\renewcommand{\leq}{\leqslant}
\renewcommand{\epsilon}{\varepsilon}
\renewcommand{\cite}{\citet}
\newtheorem{theorem}{Theorem}[section]
\newtheorem{lemma}[theorem]{Lemma}
\newtheorem{proposition}[theorem]{Proposition}
\newtheorem{example}[theorem]{Example}
\newtheorem{remark}[theorem]{Remark}
\title{Exploratory mean–variance portfolio selection with Choquet regularizers}
\author[a]{Junyi Guo}
\author[a]{Xia Han}
\author[b]{Hao Wang\thanks{Corresponding author.

\ \ \text{E-mail addresses: jyguo@nankai.edu.cn (J. Guo); xiahan@nankai.edu.cn (X. Han);}

\ \ \text{hao.wang@mail.nankai.edu.cn (H. Wang)}}}
\affil [a] {School of Mathematical Sciences and LPMC, Nankai University, Tianjin, 300071, China}
\affil [b] {School of Mathematical Sciences, Nankai University, Tianjin, 300071, China}
\date{}
\begin{document}

\maketitle

\begin{abstract}
In this paper, we study a  continuous-time   exploratory mean-variance  (EMV) problem  under the framework of reinforcement learning (RL), and  the Choquet regularizers  are used to measure the level of exploration.  By applying the classical Bellman principle of optimality, the Hamilton–Jacobi–Bellman equation of the EMV  problem  is derived and   solved   explicitly via maximizing statically a mean–variance  constrained Choquet regularizer.  In particular,  the optimal distributions form a  location–scale family, whose shape depends on the choices of the Choquet regularizer. We further reformulate the continuous-time  Choquet-regularized EMV problem using a  variant of the  Choquet regularizer.    Several examples  are given  under   specific Choquet regularizers   that generate broadly used exploratory samplers such as  exponential, uniform and Gaussian. Finally, we  design a RL algorithm to simulate and compare  results  under the  two different forms of regularizers.

\bigskip

\noindent\textbf{Keywords:} Choquet regularization, mean-variance problem, reinforcement learning, stochastic control

\bigskip

\end{abstract}

\section{INTRODUCTION}

 Reinforcement learning (RL) is an active subarea of machine learning. In RL, the agent can directly interact with the black box environment and get feedback. This kind of learning that focuses on the interaction process between the agent and the environment is called trial-and-error learning. By trial and error learning, we  skip the parameter estimation of the model and directly learn the optimal policy \citep{SB18}, which can overcome some  difficulties that traditional optimization theory may have in practice.    Many   RL algorithms are based on traditional deterministic optimization, and the optimal solution is usually a deterministic policy. But in some situations, it makes sense to solve for an optimal stochastic policy for exploration purposes. The stochastic policy is to change the determined action into a probability distribution through randomization.  Searching for the optimal stochastic policy has many advantages, such as robustness \citep{Z10} and better convergence \citep{GLGTL16} when the system dynamics are uncertain. 

Entropy  measures  the randomness of the actions an agent takes, and thus  can indicate the level of exploration in RL.  The idea of maximum entropy RL is to make the strategy more random in addition to maximizing the cumulative reward, so entropy together with a  temperature parameter   is added to the objective function as a regularization term; see e.g.,  \cite{NGJ17}.  Here, the temperature parameter  is a regularization coefficient used to control the importance of entropy;  the larger the parameter, the stronger the exploratory ability, which helps to accelerate the subsequent policy learning and reduces the possibility of the policy converging to a local optimum. \cite{HTAL17} generalized maximum entropy RL to continuous state and continuous action settings rather than tabular settings. \cite{WZZ20a} first established a continuous-time  RL framework with continuous state and action from the perspective of stochastic control and proved that the optimal exploration strategy for the linear–quadratic (LQ) control problem in the infinite time horizon is Gaussian. Further, \cite{WZ20} applied this RL framework for the first time to solve the continuous-time mean-variance (MV) problem, and we refer to \cite{Z21} for  more summaries. 
Motivated by   \cite{WZZ20a},  \cite{DDJ23} extended the exploratory stochastic control framework  to an incomplete
market, where the asset return correlates with a stochastic market state,  and learned  an equilibrium policy under  a mean-variance criterion. \cite{JSW22}  studied
the exploratory Kelly problem by considering both the amount
of investment in stock and the portion of wealth in stock as
the control for a general time-varying temperature
parameter. 

From the perspective of risk measures, \cite{HWZ23} first  introduced another kind of index that can measure the randomness of actions called Choquet regularization. They showed that  the optimal exploration distribution of LQ control problem with infinite time horizon is  no longer necessarily Gaussian as in \cite{WZZ20a}, but are dictated by the choice of Choquet
 regularizers.  As mentioned in  \cite{HWZ23},  Choquet regularizers have   a number  of theoretical and practical advantages to be  used  for RL. In particular, they satisfy several “good” properties such as quantile additivity, normalization, concavity, and consistency with convex  order (mean-preserving spreads) that facilitate analysis as regularizers.   Moreover,  the availability of a large class of Choquet regularizers makes it possible to compare and choose specific regularizers  to achieve certain objectives specific to each learning problem.   To the best of our knowledge, there is no literature  using other  regularizers rather than  entropy to quantify the information gain of exploring the environment   for practical problems.  Thus, it is natural to consider some practical exploratory stochastic control problems  using  the Choquet regularizers for regularization.

 This paper mainly studies the continuous-time  exploratory mean-variance (EMV)  problem as in  \cite{WZ20} in which we replace the differential entropy used for regularization with the Choquet regularizers. When looking for pre-committed optimal strategies as the goal, the MV model can be converted into a LQ model in finite time horizon  by \cite{ZL00}. The form of the LQ-specialized HJB equation suggests that the problem boils down to a static optimization where the given Choquet regularizer  is to be maximized over distributions with given mean and variance, which has been solved by \cite{LCLW20}.  Since the  EMV portfolio selection is  formulated in a finite time horizon,  we show that   the optimal distributions  form a location–scale family with a time-decaying variance whose shape  depends on the choice of Choquet regularizers.   This suggests that the level of exploration decreases as the time approaches the end of the planning horizon. We further give the optimal exploration strategies under several  specific Choquet regularizers, and  observe insights of  the perfect separation between exploitation and exploration in the mean and variance of the optimal distribution and the positive effect of a random environment on learning.
 
 Inspired by the form of entropy, we further reformulate the continuous-time Choquet-regularized RL problem  based on a variant of Choquet regularizers -- logarithmic Choquet regularizers.  Because of  the  monotonicity of the logarithmic function,  the problem can still be solved by  maximizing the  Choquet regularizer  over distributions with given mean and variance. However, since the   regularizers   affect the value function,   it is to be expected that the variance of the optimal distributions is  different.   Explicitly expressed costs of exploration for the two different forms of regularizers and close connections between the classical and the EMV problems are discussed.   It is interesting to see that  the costs of exploration for the two EMV problems are quite different.  To be specific,  with   the Choquet regularizers, the  exploration cost depends on the unknown model parameters and the  specific regularizers,   while   with logarithmic Choquet  regularizers, the derived exploration cost  only depends on  the exploration parameter and the time horizon, and it  is the same as the cost when using entropy as the regularizer in \cite{WZ20}. 
  
Finally, based on the  policy improvement and  convergence theorems,  we designed a RL algorithm to solve the   EMV problems  according to the continuous-time policy gradient method proposed by \cite{JZ22b} and then simulated it. By letting the  Choquet integral being   some concrete choices, we show that  our RL algorithm   based on Choquet regularizations and logarithmic Choquet regularizers perform on par with the one  in \cite{WZ20} where  the differential entropy is applied and  Gaussian is always the optimal exploration distribution.

The rest of this paper is organized as follows. Section \ref{sec:2} introduces the MV problem under the Choquet regularizations. Section \ref{sec:3} solves the   continuous-time  EMV problem  and gives several examples. Section \ref{sec:4} discusses  the corresponding results under  the variant of Choquet regularizations. Section \ref{sec:5} introduces the RL algorithm, and the simulation results of the algorithm are summarized in Section \ref{sec:6}. Section \ref{sec:7} concludes the paper.
\section{FORMULATION OF PROBLEM}\label{sec:2}

\subsection{Choquet regularizers}
We assume that $(\Omega, \mathcal{F}, \mathbb{P})$ is an atomless probability space. With a slight abuse of notation,
let 
$\mathcal M$  denote both the set of (probability) distribution functions of real random variables and the set of Borel probability measures on $\mathbb{R}$, with
the obvious identity $\Pi(x)\equiv  \Pi((-\infty, x])$   for $x \in \mathbb{R}$
and
$\Pi \in \mathcal{M}$. We denote by  $\mathcal M^p\subset \mathcal M$,  $p\in[1,\infty)$,  the set of distribution functions or probability measures with finite $p$-th moment.   For a random variable $X$ and a distribution $\Pi$,  we write $X \sim \Pi$ if the distribution of  $X$ is $\Pi$ under $\mathbb{P}$, and $X \stackrel{\rm d}{=} Y$ if two random variables $X$ and $Y$ have the same distribution.
 We denote by $\mu$ and $\sigma^2$  the mean and  variance functionals on  $\mathcal{M}^2$, respectively; that is, $\mu(\Pi)$ is the mean of $\Pi$ and $\sigma^2(\Pi)$  the variance of $\Pi$ for $\Pi \in\mathcal M^2$.  We denote by $\mathcal{M}^2(m,s^2)$    the set of $\Pi \in\mathcal M^2$  satisfying  $\mu(\Pi)=m\in\R$ and $\sigma^2(\Pi)=s^2>0$.

In \cite{HWZ23},  the  {\it Choquet regularizer} is defined to   measure and manage the level of exploration for RL  based on a subclass of signed Choquet integrals \citep{WWW20b}. Given a concave function $h:[0,1]\to \R$  of bounded variation with $h(0)=h(1)=0$ and   $\Pi\in \mathcal M$,
  the Choquet regularizer $\Phi_h$  on $\mathcal M$ is defined as 
\begin{equation*}
\Phi_h(\Pi)\equiv \int  h\circ \Pi([x,\infty))\d x:=\int_{-\infty}^{0}\left[h\circ \Pi ( [x,\infty) )-h(1)\right]\d x+\int_0^\infty h\circ \Pi([x,\infty))\d x. 
\end{equation*}
Note that  the concavity of $h$ is equivalent to several other properties, and in particular,
to that $\Phi_h$ is a concave mapping which means that 
$$\Phi_h(\lambda \Pi_1 + (1-\lambda) \Pi_2 ) \ge \lambda \Phi_h( \Pi_1)  + (1-\lambda) \Phi_h(\Pi_2 ), ~~\mbox{for all~}\Pi_1,\Pi_2\in \mathcal M\mbox{~and~}\lambda\in [0,1],$$
and consistency with convex order means
$$\Phi_h( \Pi_1  ) \le  \Phi_h(\Pi_2 ), ~~\mbox{for all~}\Pi_1,\Pi_2\in \mathcal M \mbox{~with~} \Pi_1\lcx \Pi_2.\footnote{ $\Pi_1$ is smaller than $\Pi_2$  in \emph{convex order}, denoted by  $ \Pi_1\lcx \Pi_2$, if   $\mathbb{E}[f( \Pi_1)] \leq \mathbb{E}[f( \Pi_2)]$ for all convex functions $f$,  provided that the two expectations exist. It is immediate that $\Pi_1 \lcx \Pi_2$ implies $\mathbb{E}[\Pi_1]\leq\mathbb{E}[\Pi_2]$.}$$
If $\Pi_1\lcx \Pi_2$, then  $\Pi_2$ is also called a  {\it mean-preserving spread} of $\Pi_1$, which intuitively means that $\Pi_2$ is more spread-out (and hence ``more random")  than $\Pi_1$. The set of $h:[0,1]\to \R$ is denoted by   $\mathcal H$.

We remark that the above  properties  indeed suggest  that $\Phi_h(\Pi)$ serves as a measure of randomness for  $\Pi$, since both a mixture and a mean-preserving spread introduce extra randomness. 
 On the other hand,  $h(0)=h(1)=0$  is equivalent to $\Phi_h(\delta_c)=0$, $\forall c\in \R$, where $\delta_c$ is the Dirac measure at $c$. That is, degenerate distributions do not have any randomness measured by $\Phi_h$. 
  Choquet regularizers include, for instance,  range,  mean-median deviation, the Gini deviation, and  inter-ES differences; see Section 2.6 of \cite{WWW20b}.  

By Lemma 2.2 of \cite{HWZ23},  $\Phi_h$  is  well defined, non-negative,  and
 location invariant and scale homogeneous for $h\in \mathcal H$.\footnote{We  call $\Phi_h$ to be location invariant and scale homogeneous if
  $\Phi_h(\Pi')=\lambda \Phi_h (\Pi)$
 where $\Pi' $ is the distribution of $\lambda X+c$ for $\lambda >0$, $c\in \R$ and $X\sim \Pi$.}  The properties imply that  any distribution for exploration can be measured in non-negative values. Moreover, the measurement of randomness does not depend on the location and  is linear in its scale, which  make $\Phi_h$ a meaningful  regularizer that measures  the level of  randomness, or the level of exploration in the RL context.

\medskip

For a distribution $\Pi\in\mathcal{M}$,  let its   left-quantile for $p\in(0,1]$ be defined as $$Q_\Pi(p)=\inf \left\{x\in \R: \Pi(x) \ge p\right\} .$$
  Next,  we give  a lemma which we will  rely on  when considering the  EMV problem formulated by \cite{WZ20}. Let  $h'$ be  the right-derivative of $h$ and $\Vert h'\Vert_2=\left(\int_0^1(h'(p))^2dp\right)^{1/2}$.
\begin{lemma}[Theorem 3.1 of \cite{LCLW20}]\label{lem:liu}
If $h$ is continuous and not constantly zero, then
a maximizer $\Pi^*$ to the optimization problem \begin{align}\label{eq:opt}
\max_{\Pi \in \mathcal M^2}\Phi_h (\Pi) \mbox{~~~~~subject to~} \mu (\Pi) =m ~\mbox{and}~ \sigma^2 (\Pi) = s^2
\end{align}  has the following quantile function
\begin{equation}\label{eq:lemma3}
Q_{\Pi^*}(p) =  m + s\frac{ h'(1-p) }{ ||h'||_2}, ~~ \mbox{~a.e. }p\in (0,1),
\end{equation}
and the  maximum value of \eqref{eq:opt} is $\Phi_h(\Pi^*)= s||h'||_2$. 
\end{lemma}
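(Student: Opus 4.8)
The plan is to pass from distributions to their quantile functions, which turns \eqref{eq:opt} into an elementary optimization over $L^2(0,1)$ solvable by the Cauchy--Schwarz inequality.

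First I would record a quantile representation of the regularizer. Since $h\in\mathcal H$ is concave and continuous on $[0,1]$ with $h(0)=h(1)=0$, it is absolutely continuous, its right-derivative $h'$ is non-increasing, and $h'\in L^1(0,1)$ with $\int_0^1 h'(p)\,\d p=h(1)-h(0)=0$. Using $h(t)=\int_0^t h'(u)\,\d u$ inside $\Phi_h(\Pi)=\int_\R h\bigl(\Pi([x,\infty))\bigr)\,\d x$ and interchanging the order of integration (a standard computation for signed Choquet integrals) yields
\begin{equation*}
\Phi_h(\Pi)=\int_0^1 Q_\Pi(p)\,h'(1-p)\,\d p=\int_0^1\bigl(Q_\Pi(p)-\mu(\Pi)\bigr)h'(1-p)\,\d p,\qquad \Pi\in\mathcal M^2,
\end{equation*}
the second equality because $\int_0^1 h'(1-p)\,\d p=0$; this is finite whenever $\|h'\|_2<\infty$, which I assume henceforth (otherwise one checks that the supremum in \eqref{eq:opt} is $+\infty$ and is not attained). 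In these coordinates the constraints read $\int_0^1 Q_\Pi(p)\,\d p=m$ and $\int_0^1\bigl(Q_\Pi(p)-m\bigr)^2\,\d p=s^2$.

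Next I would drop the requirement that an admissible $Q$ be non-decreasing and solve the relaxed problem: maximize $Q\mapsto\int_0^1 Q(p)h'(1-p)\,\d p$ over all $Q\in L^2(0,1)$ with $\int_0^1 Q=m$ and $\int_0^1(Q-m)^2=s^2$. Writing $Q=m+sV$ with $\int_0^1 V=0$ and $\|V\|_2=1$, the objective equals $s\int_0^1 V(p)h'(1-p)\,\d p\le s\|V\|_2\,\|h'\|_2=s\|h'\|_2$ by Cauchy--Schwarz, with equality iff $V(p)=h'(1-p)/\|h'\|_2$ a.e. This pins down $Q^*(p)=m+s\,h'(1-p)/\|h'\|_2$, which is precisely \eqref{eq:lemma3}, and the value $s\|h'\|_2$. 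The relaxation is lossless: concavity of $h$ makes $h'$ non-increasing, hence $p\mapsto h'(1-p)$ non-decreasing, so $Q^*$ is a genuine quantile function; it lies in $L^2(0,1)$ with $\|h'\|_2>0$ since $h\not\equiv0$, hence it is the quantile function of some $\Pi^*\in\mathcal M^2$, and $\int_0^1 h'(1-p)\,\d p=0$, $\int_0^1 h'(1-p)^2\,\d p=\|h'\|_2^2$ give $\mu(\Pi^*)=m$ and $\sigma^2(\Pi^*)=s^2$. Thus $\Pi^*$ is feasible, it maximizes over a class containing all feasible $\Pi$, and it is the unique maximizer up to a.e.\ equality of its quantile function, which matches the phrasing of the statement.

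I expect the genuine work to be in the first step: making the quantile representation of $\Phi_h$ rigorous for \emph{every} $\Pi\in\mathcal M^2$. This needs care with the regularity of $h$ (absolute continuity and the identity $h(t)=\int_0^t h'$, integrability of $h'$, behaviour of $h'$ near the endpoints), with the Fubini interchange when $\Pi$ has unbounded support, and with the exact identification of the superlevel sets $\{x:\Pi([x,\infty))>u\}$ with the quantile $Q_\Pi(1-u)$; this is also the only place the continuity hypothesis on $h$ is used. Once the representation is available, the optimization reduces to the one-line Cauchy--Schwarz argument above, and the convenient fact that the unconstrained optimizer is automatically a valid (monotone) quantile function comes for free from the concavity of $h$.
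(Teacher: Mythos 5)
Your proof is correct, and it is essentially the canonical argument for this result: the paper itself gives no proof of Lemma \ref{lem:liu} (it is imported verbatim as Theorem 3.1 of Liu, Cai, Lemieux and Wang (2020)), and the cited reference proves it exactly as you do, via the quantile representation $\Phi_h(\Pi)=\int_0^1 Q_\Pi(p)h'(1-p)\,\d p$ followed by Cauchy--Schwarz on the centred, normalized quantile, with concavity of $h$ guaranteeing that the unconstrained optimizer is monotone and hence a genuine quantile function. Your flagged technical points (absolute continuity of $h$, $h'\in L^1$ with $\int_0^1 h'=0$, the Fubini step, and the implicit standing assumption $\|h'\|_2<\infty$) are precisely the ones the reference handles, so there is no gap.
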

By Lemma \ref{lem:liu}, \cite{HWZ23}  presented many examples linking specific exploratory distributions with the corresponding Choquet regularizers and  generated some common exploration measures including $\epsilon$-greedy, three-point, exponential, uniform and Gaussian; see their Examples 4.3--4.6 and Sections 4.3--4.5. 
\begin{remark}\label{rem:general} The result in Lemma \ref{lem:liu} can be extended to a more general case involving higher moments.  For $a>1$, Theorem 5 in \cite{PWW20} showed that if the uncertain set is given by
 $$\mathcal{M}^{a}(m, v)=\left\{\Pi \in \mathcal{M}^a: \mu(\Pi)=m~\mbox{and}~ \mathbb{E}\left[|\Pi-m|^{a}\right] \leq v^{a}\right\},$$
 the optimization problem
 $\max_{\Pi \in \mathcal M^{a}}\Phi_h (\Pi) $, for $p\in(0,1)$, can be solved by 
\begin{equation*}
Q_\Pi(p) =  m + v \frac{\left|h^{\prime}(1-p)-c_{h, b}\right|^{b}}{h^{\prime}(1-p)-c_{h, b}}[h]_{b}^{1-b}, ~\mbox{ if }~h^{\prime}(1-p)-c_{h, b} \neq 0, ~ \mbox{ and }Q_\Pi(p) =  m~  \mbox{otherwise}.
\end{equation*}   Here,  $ b \in[1, \infty]$ is the H\"older conjugate of $a$, namely $b=(1-1 / a)^{-1}$, or equivalently, $1/a+1/b=1$,
$$
c_{h, b}=\underset{x \in \mathbb{R}}{\arg \min }\left\|h^{\prime}-x\right\|_{b} \quad \text { and } \quad[h]_{b}=\min _{x \in \mathbb{R}}\left\|h^{\prime}-x\right\|_{b}=\left\|h^{\prime}-c_{h, b}\right\|_{b},
$$ with $$
\left\|h^{\prime}-x\right\|_{b}=\left(\int_{0}^{1}\left|h^{\prime}(p)-x\right|^{b} \mathrm{~d} p\right)^{1 / b}, ~b<\infty \text {~ and }~\left\|h^{\prime}-x\right\|_{\infty}=\max _{p \in[0,1]}\left|h^{\prime}(p)-x\right|,~ x \in \mathbb{R}.
$$
  \end{remark}

\subsection{Continuous-time  EMV problem}\label{sec:2.2}

The classical MV problem has been well studied in the literature; see e.g., \cite{M52}, \cite{LN00} and \cite{LZL02}.  We  first briefly introduce the classical MV problem in continuous time.
 
   Let $T$ be a fixed investment planning horizon and $\{W_t,0\leqslant t \leqslant T\}$ be a standard  Brownian motion defined on a given filtered probability space $(\Omega,\mathscr{F},\{\mathscr{F}_t\}_{0\leqslant t\leqslant T},\mathbb{P})$ that statisfies usual conditions. 
Assume that a financial market  consists of a riskless asset and only one risky asset,  where the riskless asset has a constant interest rate $r>0$ and the risky asset has a price process governed by
\begin{equation}\label{gb}
	\d S_t=S_t(\mu \d t+\sigma \d W_t),\ \ ~~~ 0\leqslant t
\leqslant T,
\end{equation}
with $S_0=s_0>0$ where  $\mu \in \mathbb{R},\sigma >0$ is  the mean and volatility parameters, respectively. The Sharpe ratio of the risky asset is defined by $\rho={(\mu-r)}/{\sigma}$. Let $u=\{u_t,0\leqslant t \leqslant T\}$ denote the discounted amount invested in the risky asset at time  $t$, and the rest of the wealth is invested in the risk-free asset.  By \eqref{gb},  the discounted wealth process $\{X^u_t,0\leqslant t \leqslant T\}$  for a strategy $u_t$ is then given as
\begin{align}\label{dv}
	\d X_t^u&=\sigma u_t(\rho \d t+\d W_t),\ \ ~~~ 0\leqslant t
\leqslant T,
\end{align}
with $X_0^u=x_0\in \mathbb{R}$.  Under the continuous-time MV  setting, we aim  to solve the following constrained optimization problem
\begin{align}\label{cmv}
	\begin{split}
	&\min\limits_u \mathrm{Var}[X_T^u]~~~~~\text{subject to } \mathrm E[X_T^u]=z,
	\end{split}
\end{align}
where $\{X_t^u,0\leqslant t \leqslant T\}$ satisfies the dynamics \eqref{dv} under the investment strategy $u$, and $z \in \mathbb R$ is an investment target determined  at $t=0$ as the desired mean payoff at the end of the investment horizon $[0,T]$.

By applying a Lagrange multiplier $w$, we can transform \eqref{cmv} into an unconstrained problem 
\begin{align}\label{object}
\min\limits_u\mathrm E[(X^u_T)^2]-z^2-2w(\mathrm E[X_T^u]-z)=\min\limits_u\mathrm E[(X^u_T-w)^2]-(w-z)^2.
\end{align}	The problem in  \eqref{object} was well studied by  \cite{LN00},  and it  can be solved analytically, whose solution $u^*$ depends on $w$. Then the original constraint $\mathrm E[X_T^{u^*}]=z	$ determines the value of $w$.

 Employing the method in \cite{WZZ20a} and \cite{WZ20}, we  give the ``exploratory" version of the state dynamic \eqref{dv}  motivated by repetitive learning in RL.  In this formulation, the control process is now randomized, leading to a distributional or exploratory control process  denoted by $\Pi=\{\Pi_t,0\leqslant t \leqslant T\}$. Here,  $\Pi_{t}\in \mathscr{M}(U)$ is the probability distribution function for control at time $t$,   with  $\mathscr{M}(U)$  being the set of   distribution functions on $U$.  For  such  a given distributional control  $\Pi\in\mathscr{M}(U)$,  the exploratory version of the state dynamics in \eqref{dv} is changed to
 \begin{align}\label{emv}
	\d X_t^{\Pi}=\widetilde{b}(\Pi_t)\d t+\widetilde{\sigma}(\Pi_t)\d W_t,\ \ 0<t\leqslant T,
\end{align}
with $X_0^{\Pi}=x_0$, where
\begin{align}\label{b}
	\widetilde{b}(\Pi):=\int_{\mathbb R}\rho \sigma u \d\Pi (u)~~~~\text{and}~~~~
	\widetilde{\sigma}(\Pi):=\sqrt{\int_{\mathbb R}\sigma^2 u^2 \d \Pi(u)}.
\end{align}

Denote the mean and variance processes associated with the control process $\Pi $ by $\mu_t$ and $\sigma^2_t$ for $0\leqslant t\leqslant T$: 
\begin{align}\label{meanvar1}
	\begin{split}
		\mu_t: =\int_{\mathbb R}u \d\Pi_t(u), 
		~~~\sigma^2_t:&=\int_{\mathbb R}u^2\d \Pi_t(u)-\mu_t^2. 
	\end{split}
\end{align}
Then it follows from \eqref{emv}--\eqref{meanvar1} that \begin{align}\label{emv2}
	\begin{split}
		\d X_t^{\Pi }=\rho\sigma\mu_t\d t+\sigma\sqrt{\mu_t^2+\sigma_t^2}\d W_t,
	\end{split}
\end{align}
	with $X_0^{\Pi }=x_0$. We refer to  \cite[pp. 6--8]{WZZ20a} for more detailed explanation of where this exploratory formulation comes from.

Next, we use a Choquet regularizer  $\Phi_h$ to measure the level of exploration, and  the aim of the exploratory control is to achieve a continuous-time EMV problem  under the framework of RL. For any fixed $w \in \mathbb R$,    we get the Choquet-regularized EMV problem by adding an exploration weight $\lambda>0$, which reflects the strength of the exploration desire: \begin{align*}
	\min\limits_{\Pi \in\mathscr{A}(0,x_0)} \mathrm{E}\left[(X_T^{\Pi }-w)^2-\lambda\int_0^T\Phi_h(\Pi_t)\d t\right]-(w-z)^2,
\end{align*}
where    $\mathscr{A}(t,x)$   is  the set of all admissible controls $\Pi $ for  $(t,x)\in [0,T)\times \mathbb R$.   A  control process  $\Pi\in\mathscr{A}(t,x)$ is said to be admissible  if (i) for $t\leqslant s\leqslant T$, $\Pi_s \in \mathscr{M}({\mathbb R})$ a.s.; (ii) for $A \in \mathscr{B}(\mathbb R),\ \{\int_A\Pi_s(u)\d u,t \leqslant s \leqslant T\}$ is $\mathscr{F}_s$-progressively measurable;
	(iii) $\mathrm E[\int_t^T(\mu_s^2+\sigma_s^2)\d s]<\infty$; and
(iv) $\mathrm{E}[(X_T^{\Pi }-w)^2-\lambda\int_t^T\Phi_h(\Pi_s)\d s\large|X_t^{\Pi }=x]<\infty$.

The value function is then defined as
\begin{align}\label{vf}
	V(t,x;w):=\inf\limits_{\Pi \in\mathscr{A}(t,x)}\mathrm{E}\left[(X_T^{\Pi }-w)^2-\lambda\int_t^T\Phi_h(\Pi _s)ds|X_t^{\Pi }=x\right]-(w-z)^2,
\end{align}
and the value function under feedback control $\Pi $ is
\begin{align}\label{vfp}
	V^{\Pi }(t,x;w):=\mathrm{E}\left[(X_T^{\Pi }-w)^2-\lambda\int_t^T\Phi_h(\Pi _s)\d s|X_t^{\Pi }=x\right]-(w-z)^2.
\end{align}
\section{SOLVING EMV PROBLEM}\label{sec:3}

In this section, we aim to to solve the Choquet-regularized EMV   problem. 
 Firstly, we have following result based on Lemma \ref{lem:liu}.
 \begin{proposition}\label{prop:EMV_prob}
Let a continuous $h\in \mathcal H$ be given.
 For any  $\Pi=\{\Pi_t\}_{t\ge 0}\in \mathcal A(t,x)$ with mean process $\{\mu_t\}_{t\ge 0}$ and variance process $\{\sigma_t^2\}_{t\ge 0}$, there exists  $\Pi^*=\{\Pi^*_t\}_{t\ge 0}\in \mathcal A(t,x)$ given by
 \begin{equation*}
Q_{\Pi^*_t}(p) = \mu_t + \sigma_t\frac{ h'(1-p) }{ ||h'||_2}, ~~ ~~\mbox{~a.e.}~p\in (0,1),\;\;t\geq0,
\end{equation*}
 which has the same mean  and variance processes satisfying
 $V^{\Pi^* }(t,x;w)\leqslant V^{\Pi }(t,x;w)$.

 \end{proposition}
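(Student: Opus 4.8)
The plan is to exploit the fact that the exploratory state dynamics \eqref{emv2} depend on the distributional control $\Pi$ only through its mean process $\{\mu_t\}$ and variance process $\{\sigma_t^2\}$. Consequently, replacing $\Pi$ by \emph{any} admissible control with the same mean and variance processes leaves the terminal cost $\E[(X_T^{\Pi}-w)^2]$ unchanged, while Lemma~\ref{lem:liu} tells us that the particular choice $\Pi^*$ maximizes the regularizer $\Phi_h$ among all distributions with the prescribed first two moments. Hence $\Pi^*$ makes the penalty term $\lambda\E[\int_t^T\Phi_h(\Pi_s)\d s]$ as large as possible and the value $V^{\Pi^*}$ in \eqref{vfp} as small as possible.

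First I would check that $\Pi_t^*$, defined through the quantile function $Q_{\Pi_t^*}(p)=\mu_t+\sigma_t\, h'(1-p)/\|h'\|_2$, is a genuine distribution and has the claimed moments: since $h\in\mathcal H$ is concave and continuous, its right-derivative $h'$ is non-increasing, so $p\mapsto h'(1-p)$ is non-decreasing and $Q_{\Pi_t^*}$ is a bona fide (non-decreasing, left-continuous) quantile function; using $\int_0^1 h'(1-p)\d p=h(1)-h(0)=0$ and the definition of $\|h'\|_2$ one obtains $\mu(\Pi_t^*)=\mu_t$ and $\sigma^2(\Pi_t^*)=\sigma_t^2$, so the mean and variance processes are preserved (this is already contained in the statement of Lemma~\ref{lem:liu}). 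Progressive measurability of $\{\Pi_t^*\}$ follows because $\Pi_t^*$ is an explicit Borel function of $(\mu_t,\sigma_t)$, which are themselves progressively measurable; properties (i) and (ii) of admissibility then hold, and (iii) is inherited from $\Pi$ because $\E[\int_t^T(\mu_s^2+\sigma_s^2)\d s]$ is unchanged. For (iv) I would use the identity established next.

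Next I would observe that, since the coefficients in \eqref{emv2} depend on the control only via $(\mu_s,\sigma_s^2)$, the processes $X^{\Pi}$ and $X^{\Pi^*}$ started from $X_t=x$ and driven by the same Brownian motion $W$ solve the \emph{same} stochastic differential equation; by pathwise uniqueness for this Lipschitz/linear-growth SDE they coincide a.s.\ on $[t,T]$, so in particular $X_T^{\Pi}\stackrel{\rm d}{=}X_T^{\Pi^*}$ and $\E[(X_T^{\Pi^*}-w)^2]=\E[(X_T^{\Pi}-w)^2]$, which also gives admissibility condition (iv) for $\Pi^*$. Applying Lemma~\ref{lem:liu} with $(m,s^2)=(\mu_s,\sigma_s^2)$ for each fixed $s$ yields $\Phi_h(\Pi_s^*)=\sigma_s\|h'\|_2\ge\Phi_h(\Pi_s)$, hence $\int_t^T\Phi_h(\Pi_s^*)\d s\ge\int_t^T\Phi_h(\Pi_s)\d s$ pathwise, and taking expectations, $\lambda\E[\int_t^T\Phi_h(\Pi_s^*)\d s]\ge\lambda\E[\int_t^T\Phi_h(\Pi_s)\d s]$. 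Substituting into \eqref{vfp} and subtracting the larger penalty gives $V^{\Pi^*}(t,x;w)\le V^{\Pi}(t,x;w)$, which is the assertion. The only points needing care are the progressive measurability of the distribution-valued process $\{\Pi_t^*\}$ and the interchange of the pathwise inequality with the expectation (justified once integrability condition (iii) is in hand); there is no substantive obstacle, since Lemma~\ref{lem:liu} already disposes of the hard static optimization and the exploratory dynamics conveniently factor through $(\mu_t,\sigma_t^2)$.
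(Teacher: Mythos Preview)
Your proposal is correct and follows essentially the same approach as the paper: observe that the exploratory dynamics \eqref{emv2}, and hence the terminal cost, depend on $\Pi$ only through $(\mu_t,\sigma_t^2)$, then invoke Lemma~\ref{lem:liu} pointwise in $s$ to maximize the regularizer term. You have simply fleshed out details (well-definedness of $\Pi_t^*$, admissibility, identification of $X^{\Pi}$ and $X^{\Pi^*}$) that the paper leaves implicit.
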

\begin{proof}
 By \eqref{emv}, it is clear  that the term $\mathrm{E}\left[(X_T^{\Pi }-w)^2|X_t^{\Pi }=x\right]$ in \eqref{vfp}   only depends on  the mean process $\{\mu_t\}_{t\ge 0}$ and the variance process $\{\sigma_t^2\}_{t\ge 0}$ of $\{\Pi_t\}_{t\ge 0}$. Thus,
for any fixed $t\geq 0$, choose $\Pi_t^*$ with mean $\mu_t$ and variance $\sigma_t^2$
that maximizes $\Phi_h(\Pi)$. Together with  Lemma \ref{lem:liu}, we get the desired result.   \end{proof}
Proposition \ref{prop:EMV_prob} indicates that the control  problem in  \eqref{vf}  
 is  maximized within  a location--scale family of distributions,\footnote{Recall that given a distribution $\Pi$ the  {\it location-scale family} of   $\Pi$  is the set of all distributions $\Pi_{a,b}$ parameterized by  $a\in\R$ and $b>0$ such that  $\Pi_{a,b}(x)=\Pi((x-a)/b)$  for all $ x \in \mathbb{R}$} which is determined only by $h$.
\begin{remark} We know from  Remark \ref{rem:general} that if  both the reward term and the dynamic process only depend on the mean process $\mu_t$ and the $
 a$-th moment  process $\sigma^{a}_t$  of  $\Pi_t$ for $ t\geq 0$, then we have  $V^{\Pi^* }(t,x;w)\leqslant V^{\Pi }(t,x;w)$ 
with  $\Pi^{*}_t$   satisfying   $$
Q_{\Pi_t^*}(p) =  \mu_t + \sigma_t \frac{\left|h^{\prime}(1-p)-c_{h, b}\right|^{b}}{h^{\prime}(1-p)-c_{h, b}}[h]_{b}^{1-b}, ~\mbox{ if }~h^{\prime}(1-p)-c_{h, b} \neq 0, ~ \mbox{ and }Q_{\Pi_t^*}(p) =  \mu_t,  \mbox{otherwise}.
$$    \end{remark}
Using the Bellman's dynamic principle, we get
\begin{align}\label{dp}
	V(t,x;w)=\inf\limits_{\Pi \in\mathscr{A}(t,x)}\mathrm{E}\left[-\lambda\int_t^s\Phi_h(\Pi_v)dv+V(s,X_s^{\Pi };w)|X_t^{\Pi }=x\right].
\end{align}
Then we can deduce  from \eqref{dp}
 that $V$ satisfies the HJB equation \begin{align}\label{hjb1}
	V_t(t,x;w)+\min\limits_{\Pi\in \mathscr{M}(\mathbb R)}	\left[\dfrac{1}{2}\widetilde{\sigma}^2(\Pi)V_{xx}(t,x;w)+\widetilde{b}(\Pi)V_x(t,x;w)-\lambda\Phi_h(\Pi)\right]=0.
\end{align}
By \eqref{b}, the HJB equation in \eqref{hjb1} is equivalent to 
\begin{align}\label{hjb}
	V_t(t,x;w)+\min\limits_{\Pi\in \mathscr{M}(\mathbb R)}\left[\dfrac{\sigma^2}{2}\left(\mu(\Pi)^2+\sigma(\Pi)^2\right)V_{xx}(t,x;w)+\rho \sigma \mu(\Pi)V_x(t,x;w)-\lambda\Phi_h(\Pi)\right]=0,
\end{align}
with terminal condition $V(T,x;w)=(x-w)^2-(w-z)^2$. Here, we assume that $\Pi$ has finite second-order moment, and $\mu(\Pi)$ and $\sigma(\Pi)^2$ are the mean and variance of $\Pi$, respectively.

We now  pay attention to the minimization in \eqref{hjb}.  Let
\begin{align*}
	\varphi(t,x,\Pi)=\dfrac{\sigma^2}{2}\left(\mu(\Pi)^2+\sigma(\Pi)^2\right)V_{xx}(t,x;w)+\rho \sigma \mu(\Pi)V_x(t,x;w)-\lambda\Phi_h(\Pi).
\end{align*}
Note that $\varphi(t,x,\Pi)$ only depends on $\Pi$ by $\mu(\Pi)$ and $\sigma(\Pi)^2$ except $\Phi_h(\Pi)$, we get
\begin{align*}
	\min\limits_{\Pi\in \mathscr{M}(\mathbb R)}\varphi(t,x,\Pi)=\min\limits_{m\in \mathbb R,s>0}\min\limits_{\substack{\Pi\in \mathscr{M}(R)\\ \mu(\Pi)=m,\sigma(\Pi)^2=s^2}}\varphi(t,x,\Pi),
\end{align*}
and the inner minimization problem is equivalent to
\begin{align}\label{mcr}
	\max\limits_{\Pi\in \mathscr{M}(R)}\Phi_h(\Pi)\ \ \text{subject to }\mu(\Pi)=m,\ \sigma(\Pi)^2=s^2.
\end{align}
By Lemma \ref{lem:liu}, the maximizer $\Pi^*$ of \eqref{mcr} whose quantile function is $Q_{\Pi^*}(p)$ satisfies
\begin{align}\label{qf}
	Q_{\Pi^*}(p)=m+s\dfrac{h'(1-p)}{\Vert h'\Vert_2},
\end{align}
and
	$\Phi_h(\Pi^*)=s\Vert h'\Vert_2.$ 
Then  the HJB equation in \eqref{hjb} is converted to
\begin{align}\label{hjb2}
	V_t(t,x;w)+\min\limits_{m\in\mathbb R,s>0}\left[\dfrac{\sigma^2}{2}(m^2+s^2)V_{xx}(t,x;w)+\rho\sigma mV_x(t,x;w)-\lambda s\Vert h'\Vert_2\right]=0.
\end{align} 
By the first-order conditions, we get the minimizer of \eqref{hjb2}
\begin{align}\label{ms}
	m^*=-\dfrac{\rho}{\sigma}\dfrac{V_x}{V_{xx}},~~~~\ \text{and } ~~~s^*=\dfrac{\lambda \Vert h' \Vert_2}{\sigma^2v_{xx}}.
\end{align}
Bringing $m^*$ and $s^*$ back into \eqref{hjb2}, we can rewrite \eqref{hjb2} as
\begin{align}\label{hjb3}
	V_t-\dfrac{\rho^2}{2}\dfrac{V_x^2}{V_{xx}}-\dfrac{\lambda^2}{2\sigma^2}\dfrac{\Vert h'\Vert_2^2}{V_{xx}}=0.
\end{align}
By the terminal condition $V(T,x;w)=(x-w)^2-(w-z)^2$, a smooth solution to  \eqref{hjb3} is given by
\begin{align}\label{solution}
	V(t,x;w)=(x-w)^2e^{-\rho^2(T-t)}-\dfrac{\lambda^2\Vert h'\Vert_2^2}{4\rho^2\sigma^2}(e^{\rho^2(T-t)}-1)-(w-z)^2.
\end{align}
Then we can deduce from \eqref{qf}, \eqref{ms} and \eqref{solution} that 
$$
	m^*=-\dfrac{\rho}{\sigma}(x-w)\label{m}, ~~~~\text{and}~~~~
	s^*=\dfrac{\lambda\Vert h'\Vert_2}{2\sigma^2}e^{\rho^2(T-t)}\label{s},$$
and the dynamic \eqref{emv2} under $\Pi^*$ becomes 
$$\d X_t^*=-\rho^2(X_t^*-w)\d t+\sqrt{\rho^2(X_t^*-w)^2+\dfrac{\lambda^2\Vert h'\Vert^2_2}{4\sigma^2}e^{2\rho^2(T-t)}}\d W_t$$
with $X_0^*=x_0$.

Finally, we try to calculate $w$. By $\mathrm E[\max\limits_{t\in[0,T]}(X_t^*)^2]<\infty$ and using Fubini theorem,  we get
\begin{align*}
	\mathrm E[X_t^*]=x_0+\mathrm E\left[\int_0^t-\rho^2(X_s^*-w)ds\right]=x_0+\int_0^t-\rho^2(\mathrm E[X_s^*]-w)\d s.
\end{align*}
Hence, $\mathrm E[X_t^*]=(x_0-w)^2e^{-\rho^2t}+w$. It follows from $\mathrm E[X_T^*]=z$ that $$w=\frac{ze^{\rho^2T}-x_0}{e^{\rho^2T}-1}.$$

We  summarize the above results in the following theorem.
\begin{theorem}\label{nolog}
	The value function of  Choquet-regularized EMV problem in \eqref{vf} is given by
	\begin{align}\label{valuefunction}
		V(t,x;w)=(x-w)^2e^{-\rho^2(T-t)}-\dfrac{\lambda^2\Vert h'\Vert_2^2}{4\rho^2\sigma^2}(e^{\rho^2(T-t)}-1)-(w-z)^2,
	\end{align}
	and the corresponding optimal control process is $\Pi^*$, whose quantile function is 
	\begin{align}\label{quantilefunction}
	Q_{\Pi^*}(p)=-\dfrac{\rho}{\sigma}(x-w)+\dfrac{\lambda h'(1-p)}{2\sigma^2}e^{\rho^2(T-t)},	
	\end{align} with the mean and variance of $\Pi^*$
   \begin{equation}\label{variance}
		\mu(\Pi^*)=-\dfrac{\rho}{\sigma}(x-w),~~~~\text{and}~~~~
	\sigma(\Pi^*)^2=\dfrac{\lambda^2\Vert h'\Vert_2^2}{4\sigma^4}e^{2\rho^2(T-t)}.
   \end{equation}
	The optimal wealth process under $\Pi^*$ is the unique solution of the SDE
	\begin{align*}
		\begin{split}
		\d X_t^*=-\rho^2(X_t^*-w)\d t+\sqrt{\rho^2(X_t^*-w)^2+\dfrac{\lambda^2\Vert h'\Vert^2_2}{4\sigma^2}e^{2\rho^2(T-t)}}\d W_t\
	    \end{split}
	\end{align*}
with 	    $x_0^*=x_0$. 	Finally, the Lagrange multiplier $w$ is  given by $$w=\frac{ze^{\rho^2T}-x_0}{e^{\rho^2T}-1}.$$
\end{theorem}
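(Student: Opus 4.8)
The plan is to follow the dynamic-programming derivation displayed above and then close it with a standard verification argument. First I would invoke Proposition~\ref{prop:EMV_prob} to restrict the search to distributional controls whose time-$t$ marginal is the location--scale distribution with quantile function $\mu_t+\sigma_t\,h'(1-p)/\Vert h'\Vert_2$: this loses no generality because the quadratic reward in \eqref{vfp} depends on $\Pi$ only through its mean and variance processes, while for fixed $(m,s^2)$ the regularizer $\Phi_h(\Pi_t)$ is maximized exactly by such a distribution, with maximal value $s\Vert h'\Vert_2$ by Lemma~\ref{lem:liu}.

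Next I would write the Bellman dynamic programming principle \eqref{dp}, pass to the HJB equation \eqref{hjb}, and use the preceding reduction together with Lemma~\ref{lem:liu} to replace the infimum over $\mathscr M(\mathbb R)$ by an infimum over $(m,s)\in\mathbb R\times(0,\infty)$, arriving at \eqref{hjb2}. The first-order conditions in $(m,s)$ give the candidate minimizers \eqref{ms}, and substituting them back collapses \eqref{hjb2} to the scalar PDE \eqref{hjb3}. Motivated by the terminal condition $V(T,x;w)=(x-w)^2-(w-z)^2$ and the LQ structure, I would try the separated ansatz $V(t,x;w)=(x-w)^2e^{-\rho^2(T-t)}+f(t)$; the $(x-w)^2$-coefficient balance then holds automatically, and what remains is a direct integration for $f$ with $f(T)=-(w-z)^2$, which reproduces exactly \eqref{valuefunction}. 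Feeding \eqref{valuefunction} into \eqref{ms} and \eqref{qf} yields $m^*=-(\rho/\sigma)(x-w)$ and $s^*=\tfrac{\lambda\Vert h'\Vert_2}{2\sigma^2}e^{\rho^2(T-t)}$, hence the quantile function \eqref{quantilefunction} and the moments \eqref{variance}; inserting $\mu(\Pi^*)=m^*$, $\sigma(\Pi^*)^2=(s^*)^2$ into \eqref{emv2} produces the stated SDE for $X^*$.

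Two points then need genuine care. The first is the verification theorem: one must confirm that the smooth function in \eqref{valuefunction} is indeed the value function \eqref{vf} and that $\Pi^*$ is admissible and optimal. The standard route is to apply It\^o's formula to $t\mapsto V(t,X_t^{\Pi};w)$ along an arbitrary admissible $\Pi$, use that by \eqref{hjb} the function $V$ satisfies $V_t(t,x;w)+\varphi(t,x,\Pi_t)\ge 0$ for every $\Pi$, and integrate to obtain $V^{\Pi}(0,x_0;w)\ge V(0,x_0;w)$, with equality for $\Pi^*$ since $(m^*,s^*)$ attains the minimum; the stochastic integral is a true martingale thanks to the integrability conditions (iii)--(iv) in the definition of $\mathscr A(t,x)$ and the affine/linear-growth structure of \eqref{emv2}. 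One also needs strong existence and uniqueness for the $X^*$-SDE, which holds because its drift is affine in $x$ and its diffusion coefficient $x\mapsto\sqrt{\rho^2(x-w)^2+c(t)}$, with $c(t)=\tfrac{\lambda^2\Vert h'\Vert_2^2}{4\sigma^2}e^{2\rho^2(T-t)}$ bounded away from $0$ and from $\infty$ on $[0,T]$, Lipschitz in $x$ uniformly over $t\in[0,T]$. The second, and the only genuinely computational, point is the Lagrange multiplier: from $\E[\max_{t\le T}(X_t^*)^2]<\infty$ and Fubini, taking expectations in the $X^*$-SDE gives the linear ODE $\tfrac{\d}{\d t}\E[X_t^*]=-\rho^2(\E[X_t^*]-w)$ with $\E[X_0^*]=x_0$, so $\E[X_t^*]=(x_0-w)e^{-\rho^2 t}+w$; imposing $\E[X_T^*]=z$ and solving for $w$ gives $w=(ze^{\rho^2T}-x_0)/(e^{\rho^2T}-1)$. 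I expect the verification/admissibility step to be the main obstacle, since everything else is a direct computation already carried out above.
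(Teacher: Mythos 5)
Your proposal is correct and follows essentially the same route as the paper: reduction to the location--scale family via Proposition \ref{prop:EMV_prob} and Lemma \ref{lem:liu}, the HJB equation \eqref{hjb2} with first-order conditions \eqref{ms}, the quadratic ansatz for $V$, and the Lagrange multiplier from $\E[X_t^*]=(x_0-w)e^{-\rho^2 t}+w$ (the paper's displayed $(x_0-w)^2$ there is a typo; your version is the correct one). The verification step you flag as the main obstacle is exactly the one the paper also defers, citing the verification theorem of Wang--Zariphopoulou--Zhou, and your sketch of it (It\^o plus the martingale property from the admissibility conditions, and well-posedness of the $X^*$-SDE from the uniformly Lipschitz, nondegenerate coefficients) is the standard argument intended there.
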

\begin{proof}
	Along with the similar lines of the verification  theorem  in \cite {WZZ20a} (see their Theorem 4), we can verify that for any $w\in\mathbb R$,  \eqref{valuefunction} is  indeed the value function and the optimal control $\Pi^*$ is admissible.\qed
\end{proof}

There are several observations to note in this result.  We    can see from  \eqref{quantilefunction}  that
for any  Choquet regularizer,  the optimal exploratory distribution    is  uniquely determined   by $h'$.  Different $h$  corresponds to a different Choquet regularizer; hence $h$ will certainly   affect the way and the level of exploration.  Also,   since $h'(x)$ is the ``probability weight" put on $x$ when calculating the (nonlinear) Choquet expectation; see e.g.,   \cite{GS89} and \cite{Q82},  the more weight put on the level of exploration, the more spreaded out the exploration becomes around the current position.  In addition, we point out that if we fix the value of $\Vert h'\Vert^2_2$ for  different Choquet regularizers by multiplying or dividing by a constant, the mean and variance of the different optimal distributions are equal. 

Moreover,  the optimal control processes under $\Phi_h$ has the same expectation  as the one in \cite{WZ20} when the differential  entropy is used as a regularizer, which is also identical to the optimal control of the classical, non-exploratory MV problem, and the expectation is independent of $\lambda$ and $h$. Meanwhile,  the variance of optimal control process is independent of state $x$ but decreases over time, which is different from \cite{HWZ23} where an infinite horizon counterpart is studied. This is intuitive because by exploration,  one can  get more  information over time,  and then the demand and aspiration of exploration decreases.  In a sense, the expectation represents exploitation which means making the best decision based on existing information, and the variance represents exploration.  As a result, the
 observations above  show a perfect separation between exploitation
 and exploration.

In the following example,   we show optimal exploration samplers under the EMV framework for some  concrete choices of $h$ studied in \cite{HWZ23}. Theorem \ref{nolog} yields that the mean of the  optimal distribution is independent of $h$,  so we will specify only  its quantile function and variance  for each  $h$ discussed below. 
\begin{example}\label{exm:3.4}
(i) Let $h(p)=-p\log(p)$. Then we have 
    $$\Phi_h (\Pi)=\int_0^\infty  \Pi([x,\infty)) \log(\Pi([x,\infty)))\d x,$$   which is  the cumulative residual entropy defined in \cite{HC20} and \cite{RCVW04};  see  Example 4.5  of \cite{HWZ23}.   The  optimal  policy is   a shifted-exponential  distribution   given as
$$\Pi^*(u; t,x)=1-\exp\left\{-\frac {2\sigma^2}{\lambda e^{\rho^2(T-t)}}\left(u+\frac{\rho}{\sigma}(x-w)\right) -1\right\}.$$ Since  $\|h'\|^2_2=1$, the variance of $\Pi^*$ is given by $$(\sigma^*(x))^2=\dfrac{\lambda^2}{4\sigma^4}e^{2\rho^2(T-t)}.$$

(ii) Let $h(p)=\int_0^p z(1-s)\d s$, where $z$ is the standard normal quantile function. We have  $\Phi_h (\Pi) =\int_0^1 Q_\Pi(p) z (p) \d p$; see Example 4.6 of \cite{HWZ23}.    The  optimal  policy  is a normal   distribution  given by
$$
{\Pi}^{*}(\cdot ; t,x)=  {\mathrm N}\left(-\dfrac{\rho}{\sigma}(x-w), \dfrac{\lambda^2}{4\sigma^4}e^{2\rho^2(T-t)}\right),
$$  owing to the fact that $\|h'\|^2_2=1$.

(iii)
Let $h(p)=p-p^2$. Then  $
  \Phi_h(\Pi) =  \E[|X_1-X_2|]/2$, which is the  Gini mean difference; see Section 4.5 of \cite{HWZ23}.  The  optimal  policy  $\Pi^*(\cdot;x)$ is  a uniform  distribution   given as
  $$\mathrm{U}\left[-\dfrac{\rho}{\sigma}(x-w)-\dfrac{\lambda}{2\sigma^2}e^{\rho^2(T-t)},-\dfrac{\rho}{\sigma}(x-w)+\dfrac{\lambda}{2\sigma^2}e^{\rho^2(T-t)}\right].$$
Since $\|h'\|^2_2=1/3$, the variance of $\Pi^*$ is given by $(\sigma^*(x))^2={\lambda^2e^{2\rho^2(T-t)}}/{12\sigma^4}.$

\end{example}

\section{An alternative form of  Choquet regularizers}\label{sec:4}

As mentioned in Introduction, for an absolutely continuous $\Pi$,  Shannon's differential entropy, defined  as \begin{equation*}\label{eq:DE}{\rm DE}(\Pi):=-\int_{\R}\Pi'(x)\log(\Pi'(x))\d x\end{equation*} 
is commonly used for exploration--exploitation balance in  RL; see  \cite{WZ20},  \cite{JSW22} and \cite{DDJ23}.  It admits a different quantile representation (see \cite{SS12}) 
\begin{equation*}
	{\rm DE}(\Pi)= \int_0^1 \log (Q'_\Pi(p)) \d p.
\end{equation*}
It is clear that DE is location invariant, but not scale homogeneous. It is not quantile additive either. Therefore,  DE is {\it not} a Choquet regularizer.

 Inspired by the logarithmic form of DE, we  consider another   EMV problem:
\begin{align}\label{dp1}
	\widehat V(t,x;w):=\inf\limits_{\Pi \in\mathscr{A}(t,x)}\mathrm{E}\left[(X_T^{\Pi }-w)^2-\lambda\int_t^T\log\Phi_h(\Pi_s)ds|X_t^{\Pi }=x\right]-(w-z)^2,
\end{align}
where we apply the  logarithmic form of $\Phi_h$  as  the regularizer   to   measure and manage the level of exploration. According to the monotonicity and concavity of logarithmic function, we can easily verify that $\log\Phi_h$ is  still  a concave mapping:
$$\log\Phi_h(\lambda \Pi_1 + (1-\lambda) \Pi_2 ) \ge \log(\lambda \Phi_h( \Pi_1)  + (1-\lambda) \Phi_h(\Pi_2 )) \ge \lambda\log\Phi_h(\Pi_1)+(1-\lambda)\log\Phi_h(\Pi_2)$$ $\mbox{for all~}\Pi_1,\Pi_2\in \mathcal M\mbox{~and~}\lambda\in [0,1],$
and consistent with convex order:  $$\log\Phi_h( \Pi_1  ) \le  \log\Phi_h(\Pi_2 ), ~~~~\mbox{for all~}\Pi_1,\Pi_2\in \mathcal M \mbox{~with~} \Pi_1\lcx \Pi_2.$$
Comparing to the properties of $\Phi_h$, $\log\Phi_h$ is not necessarily non-negative as $\Phi_h$.    However,  the non-negativity does not inherently affect the exploration.  Further, $\Phi(\Pi)$ is zero when $\Pi$ is Dirac measure, we  then have $\log\Phi(\delta_c)=-\infty$ for all $c\in \mathbb R$.  The location invariance for $\log\Phi_h$ is obvious. For scale homogeneity, $\log\Phi_h$ is no longer linear in its scale, but  we have  $\log\Phi_h(\Pi')=\log\Phi_h(\Pi)+\log\lambda$ for any $\lambda>0$  where $\Pi' $ is the distribution of $\lambda X$ for $\lambda >0$ and $X\sim \Pi$.    It is interesting to see that the level of  randomness is  captured by the term  of $\log\lambda$.  Based on the observations above, we find that $\log\Phi_h$ has many similarities with DE in capturing the randomness.

We remark that  maximizing $\Phi_h$  over $\mathcal{M}^2(m,s^2)$ is equivalent to maximizing  $\log\Phi_h$ over $\mathcal{M}^2(m,s^2)$.  In the following theorem, we give the optimal result of \eqref{dp1} directly.  Since the procedure is similar to Section \ref{sec:3}, we omit the details here.

\begin{theorem}\label{log}
	The value function of  \eqref{dp1} is given by
	\begin{align}\label{valuefunction1}
		\widehat V(t,x;w)=(x-w)^2e^{-\rho^2(T-t)}+\dfrac{\lambda\rho^2}{4}(T^2-t^2)-\dfrac{\lambda}{2}\left(\rho^2T+\log\dfrac{\lambda\Vert h' \Vert_2^2}{2e\sigma^2}\right)(T-t)-(w-z)^2,
	\end{align}
	and the corresponding optimal control process is $\widehat\Pi^*$ with quantile function
	\begin{align}\label{quantilefunction1}
	Q_{\widehat\Pi^*}(p)&=-\dfrac{\rho}{\sigma}(x-w)+\sqrt{\dfrac{\lambda}{2\sigma^2\Vert h'\Vert^2_2}}e^{\frac{1}{2}\rho^2(T-t)}h'(1-p).
	\end{align}
	Moreover, the mean and variance of $\Pi^*$ are
	\begin{align*}
		\mu(\widehat\Pi^*) =-\dfrac{\rho}{\sigma}(x-w),~~and ~~
	\sigma(\widehat\Pi^*)^2=\dfrac{\lambda}{2\sigma^2}e^{\rho^2(T-t)}.
	\end{align*}
	The optimal wealth process under $\Pi^*$ is the unique solution of the SDE
	\begin{align*}
		\begin{split}
		\d X_t^*&=-\rho^2(X_t^*-w)\d t+\sqrt{\rho^2(X_t^*-w)^2+\dfrac{\lambda}{2}e^{\rho^2(T-t)}}\d W_t
	    \end{split}
	\end{align*}
	with     $X_0^*=x_0$. 
	Finally, the Lagrange multiplier $w$ is given by $$w=\frac{ze^{\rho^2T}-x_0}{e^{\rho^2T}-1}.$$
\end{theorem}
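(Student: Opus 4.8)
The plan is to mirror, line for line, the derivation in Section \ref{sec:3}, replacing the penalty $\Phi_h$ by $\log\Phi_h$ throughout, and then to close with a verification argument as in Theorem \ref{nolog}. The first step is the reduction of the inner optimization. Since $\log$ is strictly increasing, for fixed $m\in\R$ and $s>0$ the problem $\max_{\Pi\in\mathcal M^2(m,s^2)}\log\Phi_h(\Pi)$ has the same maximizer as $\max_{\Pi\in\mathcal M^2(m,s^2)}\Phi_h(\Pi)$ (this equivalence is already noted just before the statement), so Lemma \ref{lem:liu} applies verbatim: the maximizer $\Pi^*$ has quantile function $Q_{\Pi^*}(p)=m+s\,h'(1-p)/\Vert h'\Vert_2$ and value $\Phi_h(\Pi^*)=s\Vert h'\Vert_2$, hence $\log\Phi_h(\Pi^*)=\log(s\Vert h'\Vert_2)$. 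The analogue of Proposition \ref{prop:EMV_prob} then shows the control stays in the location–scale family generated by $h$, and the HJB equation for $\widehat V$ reduces, exactly as in the passage leading to \eqref{hjb2}, to
$$\widehat V_t+\min_{m\in\R,\,s>0}\Big[\tfrac{\sigma^2}{2}(m^2+s^2)\widehat V_{xx}+\rho\sigma m\,\widehat V_x-\lambda\log(s\Vert h'\Vert_2)\Big]=0,$$
with terminal condition $\widehat V(T,x;w)=(x-w)^2-(w-z)^2$.

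Next I would perform the static minimization over $(m,s)$. The objective is strictly convex and coercive on $\R\times(0,\infty)$ whenever $\widehat V_{xx}>0$ (in $m$ it is a positive-definite quadratic; in $s$ the map $s\mapsto \tfrac{\sigma^2}{2}s^2\widehat V_{xx}-\lambda\log s$ is convex, blows up as $s\downarrow 0$ and as $s\uparrow\infty$), so the first-order conditions identify the unique minimizer: $m^*=-(\rho/\sigma)\widehat V_x/\widehat V_{xx}$ as in \eqref{ms}, and the $s$-equation $\sigma^2 s\widehat V_{xx}-\lambda/s=0$, i.e. $(s^*)^2=\lambda/(\sigma^2\widehat V_{xx})$. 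Substituting back, the $m^*$-terms contribute $-\tfrac{\rho^2}{2}\widehat V_x^2/\widehat V_{xx}$, the quadratic $s^*$-term contributes $+\lambda/2$, and the log-penalty contributes $-\tfrac{\lambda}{2}\log\big(\lambda\Vert h'\Vert_2^2/(\sigma^2\widehat V_{xx})\big)$, yielding the reduced PDE
$$\widehat V_t-\tfrac{\rho^2}{2}\frac{\widehat V_x^2}{\widehat V_{xx}}+\frac{\lambda}{2}-\frac{\lambda}{2}\log\!\Big(\frac{\lambda\Vert h'\Vert_2^2}{\sigma^2\widehat V_{xx}}\Big)=0.$$
Inserting the ansatz $\widehat V(t,x;w)=(x-w)^2e^{-\rho^2(T-t)}+\phi(t)-(w-z)^2$, the terms quadratic in $x-w$ cancel just as in Section \ref{sec:3}, $\widehat V_{xx}=2e^{-\rho^2(T-t)}$, and $\phi$ solves the linear ODE $\phi'(t)=\tfrac{\lambda}{2}\log\big(\lambda\Vert h'\Vert_2^2/(2e\sigma^2)\big)+\tfrac{\lambda\rho^2}{2}(T-t)$ with $\phi(T)=0$; integrating gives $\phi(t)=-\tfrac{\lambda\rho^2}{4}(T-t)^2-\tfrac{\lambda}{2}\log\big(\lambda\Vert h'\Vert_2^2/(2e\sigma^2)\big)(T-t)$, which rearranges to the expression \eqref{valuefunction1}. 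Reading off $m^*$ and $(s^*)^2$ with this $\widehat V_{xx}$ gives $\mu(\widehat\Pi^*)=-(\rho/\sigma)(x-w)$ and $\sigma(\widehat\Pi^*)^2=\tfrac{\lambda}{2\sigma^2}e^{\rho^2(T-t)}$, hence the quantile function \eqref{quantilefunction1}; substituting into \eqref{emv2} gives the stated wealth SDE, and taking expectations there (justified by $\E[\max_{t}(X_t^*)^2]<\infty$ and Fubini, as in Section \ref{sec:3}) yields a linear ODE for $\E[X_t^*]$ whose terminal constraint $\E[X_T^*]=z$ pins down $w=(ze^{\rho^2T}-x_0)/(e^{\rho^2T}-1)$.

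Finally, as in the proof of Theorem \ref{nolog}, I would close with a verification argument in the spirit of \cite{WZZ20a} (their Theorem 4): check that the smooth candidate above has the polynomial growth needed for the Itô/localization estimates, that $\widehat\Pi^*$ meets conditions (i)--(iv) of admissibility from Section \ref{sec:2.2}, and that for any admissible $\Pi$ the process $\widehat V(s,X^\Pi_s;w)-\lambda\int_t^s\log\Phi_h(\Pi_v)\,\d v$ is a submartingale (after localization), with equality under $\widehat\Pi^*$. The main obstacle is exactly this last step: unlike $\Phi_h$, the penalty $\log\Phi_h$ is not bounded below — it tends to $-\infty$ along degenerate controls and need not be non-negative — so condition (iv) and the uniform-integrability estimates cannot simply be quoted from the $\Phi_h$ case and must be re-examined; the saving grace is that along the candidate optimum the penalty is a bounded deterministic function, $\log\Phi_h(\widehat\Pi^*_s)=\tfrac12\log\big(\tfrac{\lambda}{2\sigma^2}e^{\rho^2(T-s)}\big)+\log\Vert h'\Vert_2$, so $\int_t^T\log\Phi_h(\widehat\Pi^*_s)\,\d s$ is finite and the verification goes through.
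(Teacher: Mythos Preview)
Your proposal is correct and follows precisely the approach the paper takes: the paper itself omits the details of Theorem \ref{log}, stating only that ``the procedure is similar to Section \ref{sec:3},'' and your derivation supplies exactly those omitted steps (Lemma \ref{lem:liu} for the inner optimization, first-order conditions in $(m,s)$, the quadratic ansatz, the ODE for $\phi$, and the verification along the lines of Theorem \ref{nolog}). Your added caution about the unboundedness of $\log\Phi_h$ from below and its resolution along the candidate optimum is a sensible refinement that the paper does not spell out.
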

\begin{remark}
By \eqref{quantilefunction1},  we can see that  the optimal exploratory distribution is  also   uniquely determined   by $h'$.   Since the form of  $\log\Phi_h$  affects  the value function,   even though the form of optimal distributions is the same, it is to be expected that  the variance of the optimal distributions is different from \eqref{quantilefunction}. 
It is worth  pointing  that  the mean and variance of the optimal distributions  are the same as those in \cite{WZ20} where the differential  entropy is used as a regularizer, which is an interesting observation.  This is because 
for the  payoff
 function depending only on the mean and variance processes of the distributional control,  the Gaussian distribution maximizes the entropy when the mean and variance  are fixed, and  the maximized
MV constrained entropy and $\log\Phi_h$  are equal  and  both logorithmic in the given standard deviation   and independent of the mean.  
 Moreover, since different $h$ corresponds to different  exploratory distributions, our  optimal exploratory distributions are no longer necessarily Gaussian as in  \cite{WZ20}, and are  dictated by the choice of Choquet regularizers, which can be such as Gaussian, uniform distribution  or exponential distribution. 
\end{remark}

Parallel to  Example \ref{exm:3.4}, we  give Example \ref{exm:4.3}. Theorem \ref{log} yields that both  the mean  and the variance of the  optimal distribution are independent of $h$,  so we will specify only  its quantile function. 
\begin{example}\label{exm:4.3}
(i) Let $h(p)=-p\log(p)$. Then we have
    $$\log\Phi_h (\Pi)=\log\int_0^\infty  \Pi([x,\infty)) \log(\Pi([x,\infty)))\d x.$$    The  optimal  policy is   a shifted-exponential  distribution   given as
$$\Pi^*(u; t,x)=1-\exp\left\{-\sqrt{\frac {2\sigma^2}{\lambda e^{\rho^2(T-t)}}}(u+\frac{\rho}{\sigma}(x-w)) -1\right\}.$$ 

(ii) Let $h(p)=\int_0^p z(1-s)\d s$, where $z$ is the standard normal quantile function. We have  $\log \Phi_h (\Pi) =\log\int_0^1 Q_\Pi(p) z (p) \d p$.  The  optimal  policy  is a normal   distribution  given by
$$
{\Pi}^{*}(\cdot ; t,x)=  {\mathrm N}\left(-\dfrac{\rho}{\sigma}(x-w),\dfrac{\lambda}{2\sigma^2}e^{\rho^2(T-t)}\right).
$$ 

(iii)
Let $h(p)=p-p^2$. Then  $
 \log \Phi_h(\Pi) =  \log \E[|X_1-X_2|]-\log2$.  The  optimal  policy  $\Pi^*(\cdot;x)$ is  a uniform  distribution   given as
  $$\mathrm{U}\left[-\dfrac{\rho}{\sigma}(x-w)-\sqrt{\dfrac{3\lambda}{2\sigma^2}e^{\rho^2(T-t)}},-\dfrac{\rho}{\sigma}(x-w)+\sqrt{\dfrac{3\lambda}{2\sigma^2}e^{\rho^2(T-t)}}\right].$$

\end{example}

 Next, we consider the solvability equivalence between the classical and the exploratory MV problems. Here,   “solvability equivalence”  implies  that the solution of one
 problem will lead to that of the other directly, without needing to solve it separately.   
 Recall  the classical MV problem in Section \ref{sec:2.2}.   The explicit forms of optimal control  and value function, denoted respectively by $u^*$ and $V^{cl}$,  were given by Theorem 3.2-(b) of \cite{WZ20}. We provide the solvability equivalence between the  classical and the exploratory MV problems defined by  \eqref{object}, \eqref{vf} and  \eqref{dp1}, respectively.  Since the proof is similar to that  of Theorem 9 in Appendix C of \cite{WZZ20a},   we omit the details here.

\begin{proposition}\label{eq}
	The following three statements (a), (b),  (c) are equivalent.\\
	(a) The function $V(t,x;w)=(x-w)^2e^{-\rho^2(T-t)}-\dfrac{\lambda^2\Vert h'\Vert_2^2}{4\rho^2\sigma^2}(e^{\rho^2(T-t)}-1)-(w-z)^2$, $(t,x)\in [0,T]\times\mathbb R$, is the value function of the EMV problem \eqref{vf} and the optimal feedback control is $\Pi^*$, whose quantile function is
	\begin{align*}
		Q_{\Pi^*}(p)&=-\dfrac{\rho}{\sigma}(x-w)+\dfrac{\lambda h'(1-p)}{2\sigma^2}e^{\rho^2(T-t)}.
	\end{align*}\\
	(b) The value function $	\widehat V(t,x;w)=(x-w)^2e^{-\rho^2(T-t)}+\dfrac{\lambda\rho^2}{4}(T^2-t^2)-\dfrac{\lambda}{2}(\rho^2T+\log\dfrac{\lambda\Vert h' \Vert_2^2}{2e\sigma^2})(T-t)-(w-z)^2
$, $(t,x)\in [0,T]\times\mathbb R$, is the value function of the EMV problem \eqref{dp1} and the optimal feedback control is $\widehat\Pi^*$, whose quantile function is
	\begin{align*}
	Q_{\widehat\Pi^*}(p)&=-\dfrac{\rho}{\sigma}(x-w)+\sqrt{\dfrac{\lambda}{2\sigma^2\Vert h'\Vert^2_2}}h'(1-p)e^{\frac{1}{2}\rho^2(T-t)}.	
	\end{align*}\\
	(c) The function $V^{cl}(t,x;w)=(x-w)^2e^{-\rho^2(T-t)}-(w-z)^2$, $(t,x)\in [0,T]\times\mathbb R$, is the value function of the classical MV problem \eqref{object} and the optimal feedback control is
	\begin{align*}
		u^*(t,x;w)=-\dfrac{\rho}{\sigma}(x-w).
	\end{align*}
	Moreover, the three problems above all have the  same Lagrange multiplier $$w=\frac{ze^{\rho^2T}-x_0}{e^{\rho^2T}-1}.$$
\end{proposition}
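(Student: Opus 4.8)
The plan is to establish the three equivalences by proving that each of statement $(a)$ and statement $(b)$ is equivalent to statement $(c)$, using the classical problem as a hub. The structural fact that makes this essentially bookkeeping — and that is already visible in the way the three value functions are written — is that they differ only by deterministic functions of time: with $V^{cl}(t,x;w)=(x-w)^2e^{-\rho^2(T-t)}-(w-z)^2$ one has $V(t,x;w)=V^{cl}(t,x;w)-g(t)$, where $g(t)=\frac{\lambda^2\|h'\|_2^2}{4\rho^2\sigma^2}(e^{\rho^2(T-t)}-1)$, and $\widehat V(t,x;w)=V^{cl}(t,x;w)+\widehat g(t)$, where $\widehat g(t)=\frac{\lambda\rho^2}{4}(T^2-t^2)-\frac{\lambda}{2}(\rho^2T+\log\frac{\lambda\|h'\|_2^2}{2e\sigma^2})(T-t)$. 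Consequently $V_x=\widehat V_x=V^{cl}_x$ and $V_{xx}=\widehat V_{xx}=V^{cl}_{xx}=2e^{-\rho^2(T-t)}$, and all three share the terminal datum $(x-w)^2-(w-z)^2$; in particular the optimal-mean formula in \eqref{ms} reduces to $m^*=-\frac{\rho}{\sigma}(x-w)$ in each case, which is exactly the classical feedback $u^*$ in $(c)$.

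For $(c)\Rightarrow(a)$ I would substitute $V=V^{cl}-g$ into the EMV HJB equation \eqref{hjb3}. Since $V^{cl}$ solves the classical HJB $V^{cl}_t-\frac{\rho^2}{2}(V^{cl}_x)^2/V^{cl}_{xx}=0$, the substitution collapses \eqref{hjb3} to the scalar ODE $g'(t)=-\frac{\lambda^2\|h'\|_2^2}{4\sigma^2}e^{\rho^2(T-t)}$ with $g(T)=0$, whose unique solution is the stated $g$; the optimal standard deviation $s^*$ is then read off from the first-order condition in \eqref{ms} with $V_{xx}=2e^{-\rho^2(T-t)}$, and Lemma \ref{lem:liu} turns the pair $(m^*,s^*)$ into the quantile function displayed in $(a)$. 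The converse $(a)\Rightarrow(c)$ is the same computation run backwards: adding $g$ to $V$ and deleting the regularization term recovers the classical HJB, whose (degenerate) minimizer is $\delta_{u^*}$. The equivalence $(c)\Leftrightarrow(b)$ is identical except that the inner static problem now maximizes $\log\Phi_h$ over $\mathcal M^2(m,s^2)$; by the monotonicity remark preceding Theorem \ref{log} this has the same maximizer as $\Phi_h$, so $\log\Phi_h(\widehat\Pi^*)=\log(s\|h'\|_2)$, the first-order condition in $s$ becomes $\sigma^2 s\,\widehat V_{xx}-\lambda/s=0$, giving $\sigma(\widehat\Pi^*)^2=\frac{\lambda}{2\sigma^2}e^{\rho^2(T-t)}$, and the residual ODE for $\widehat g$ integrates to the polynomial above.

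Once the HJB identifications are in hand, the common Lagrange multiplier is immediate: in each case the optimal wealth obeys $\d X_t^*=-\rho^2(X_t^*-w)\d t+(\cdots)\d W_t$, where the drift sees the control only through its mean $m^*=u^*$, so $\E[X_t^*]$ solves $\frac{\d}{\d t}\E[X_t^*]=-\rho^2(\E[X_t^*]-w)$ with $\E[X_0^*]=x_0$, and imposing $\E[X_T^*]=z$ forces $w=(ze^{\rho^2T}-x_0)/(e^{\rho^2T}-1)$ in all three problems. The step I expect to be the genuine obstacle — and the reason the paper defers to Appendix C of \cite{WZZ20a} — is the verification argument: a smooth solution of the HJB is only a candidate, so one must still check that the constructed distributional controls are admissible in the sense of (i)--(iv) of Section \ref{sec:2.2} (in particular $\E[\int_t^T(\mu_s^2+\sigma_s^2)\d s]<\infty$, which uses the explicit time profile of the optimal variance) and that the standard martingale and transversality estimates identify each candidate with the corresponding value function. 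This is precisely the verification already invoked in the proof of Theorem \ref{nolog}, and it transfers to $\widehat V$ without change because $\widehat V$ differs from $V$ only by a bounded deterministic function of $t$.
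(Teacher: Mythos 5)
Your proposal is correct and follows essentially the route the paper intends: the paper omits the proof by deferring to Theorem 9 in Appendix C of \cite{WZZ20a}, and that argument is exactly your decomposition $V=V^{cl}-g(t)$, $\widehat V=V^{cl}+\widehat g(t)$, with the HJB collapsing to a scalar ODE in $t$ and the remaining work being the verification/admissibility step you correctly flag. Your computations of $g$, $\widehat g$, the first-order conditions, and the common Lagrange multiplier all check out against Theorems \ref{nolog} and \ref{log}.
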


 From the  proposition  above,  we naturally want to explore more connections between $(a)$, $(b)$ and $(c)$. In fact, they have the following convergence property. 
\begin{proposition}
	Suppose that statement (a) or (b) or (c) of Proposition \ref{eq} holds. Then for each $(t,x,w)\in [0,T]\times\mathbb R \times\mathbb R$,
	\begin{align*}
	\lim\limits_{\lambda\rightarrow 0}\widehat\Pi^*(\cdot ;t,x;w)=	\lim\limits_{\lambda\rightarrow 0}\Pi^*(\cdot ;t,x;w)=\delta_{u^*(t,x;w)}(\cdot)\ \ \text{weakly},
	\end{align*}
	and  
	\begin{align*}
		\lim\limits_{\lambda\rightarrow 0}|V(t,x;w)-V^{cl}(t,x;w)|=0, ~~\text{and}~~\lim\limits_{\lambda\rightarrow 0}|\widehat V(t,x;w)-V^{cl}(t,x;w)|=0.
	\end{align*}
\end{proposition}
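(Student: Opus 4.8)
The plan is to read everything off the closed-form expressions collected in Proposition \ref{eq}: since statements (a), (b) and (c) there are equivalent, as soon as one of them holds we may use simultaneously the explicit formulas for $\Pi^*$, $\widehat\Pi^*$, $u^*$, $V$, $\widehat V$ and $V^{cl}$, each of which depends on $\lambda$ only through elementary factors. Fix $(t,x,w)\in[0,T]\times\mathbb R\times\mathbb R$ throughout and write $\mu_0:=u^*(t,x;w)=-\tfrac{\rho}{\sigma}(x-w)$ for the (common, $\lambda$-independent) mean.

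First I would treat the weak convergence of the exploratory controls. From \eqref{quantilefunction} and \eqref{quantilefunction1}, for a.e.\ $p\in(0,1)$,
\begin{equation*}
Q_{\Pi^*}(p)=\mu_0+\frac{\lambda\,e^{\rho^2(T-t)}}{2\sigma^2}\,h'(1-p),\qquad
Q_{\widehat\Pi^*}(p)=\mu_0+\sqrt{\frac{\lambda}{2\sigma^2\Vert h'\Vert_2^2}}\;e^{\frac12\rho^2(T-t)}\,h'(1-p).
\end{equation*}
Because $h'$ is a fixed function with $\Vert h'\Vert_2<\infty$, the value $h'(1-p)$ is finite for a.e.\ $p$, so both quantile functions converge pointwise a.e.\ to the constant $\mu_0$ as $\lambda\downarrow0$. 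Pointwise a.e.\ convergence of quantile functions to a constant is equivalent to weak convergence of the corresponding laws to the Dirac mass at that constant, which yields $\Pi^*(\cdot;t,x;w)\to\delta_{\mu_0}$ and $\widehat\Pi^*(\cdot;t,x;w)\to\delta_{\mu_0}$ weakly. Equivalently, one can argue directly by Chebyshev's inequality, using that $\mu(\Pi^*)=\mu(\widehat\Pi^*)=\mu_0$ does not depend on $\lambda$ while $\sigma(\Pi^*)^2=\tfrac{\lambda^2\Vert h'\Vert_2^2}{4\sigma^4}e^{2\rho^2(T-t)}\to0$ and $\sigma(\widehat\Pi^*)^2=\tfrac{\lambda}{2\sigma^2}e^{\rho^2(T-t)}\to0$, so that for each $\epsilon>0$ the mass placed outside $[\mu_0-\epsilon,\mu_0+\epsilon]$ tends to $0$.

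Next, the value-function limits follow by comparing the formulas term by term. Subtracting the expressions in Proposition \ref{eq} gives $V(t,x;w)-V^{cl}(t,x;w)=-\tfrac{\lambda^2\Vert h'\Vert_2^2}{4\rho^2\sigma^2}\bigl(e^{\rho^2(T-t)}-1\bigr)$, which is $O(\lambda^2)$ and hence tends to $0$. For the logarithmic problem,
\begin{equation*}
\widehat V(t,x;w)-V^{cl}(t,x;w)=\frac{\lambda\rho^2}{4}(T^2-t^2)-\frac{\lambda}{2}\Bigl(\rho^2T+\log\tfrac{\lambda\Vert h'\Vert_2^2}{2e\sigma^2}\Bigr)(T-t),
\end{equation*}
and the only point not entirely elementary here is that $\lambda\log\lambda\to0$ as $\lambda\downarrow0$; once that is noted, every term on the right-hand side vanishes in the limit.

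I do not anticipate any genuine obstacle, as the statement is essentially a corollary of the explicit forms in Proposition \ref{eq}. The only step that deserves more than a one-line remark is the weak-convergence claim for the exploratory laws: one should explicitly invoke (or quickly prove) the standard fact that a family of probability measures with a fixed mean and vanishing variance converges weakly to the Dirac mass at that mean --- equivalently, that a.e.\ convergence of quantile functions to a constant forces weak convergence --- rather than leaving it implicit.
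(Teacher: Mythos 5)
Your proposal is correct and follows essentially the same route as the paper, which simply declares the weak convergence obvious and reduces the value-function limits to the two observations that the $O(\lambda^2)$ term and the $\tfrac{\lambda}{2}\log\tfrac{\lambda\Vert h'\Vert_2^2}{2e\sigma^2}$ term both vanish as $\lambda\to 0$. Your additional detail (pointwise convergence of quantile functions, or equivalently the Chebyshev argument from fixed mean and vanishing variance) merely fills in the step the paper leaves implicit.
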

\begin{proof}
	The weak convergence is obvious and the convergence of value function follows from
	\begin{align*}
		\lim\limits_{\lambda\rightarrow 0} \dfrac{\lambda^2\Vert h'\Vert_2^2}{4\rho^2\sigma^2}(e^{\rho^2(T-t)}-1)=0,
	~~~~\text{and}~~~~
		\lim\limits_{\lambda\rightarrow 0} \dfrac{\lambda}{2}\log\dfrac{\lambda\Vert h'\Vert_2^2}{2e\sigma^2}=0.
\end{align*}\qed
\end{proof}
Next,  we examine the ``cost of  exploration" -- the loss in the original (i.e.,
non-regularized) objective due to exploration, which was originally defined and derived in \cite{WZZ20a} for problems with entropy regularization.  
Due to the explicit inclusion of exploration in the objectives \eqref{vf} and \eqref{dp1}, the cost of the EMV problems are defined as
\begin{align}\label{cost1}
	C^{u^*,\Pi^*}(0,x_0;w)=\left(V(0,x_0;w)+\lambda \mathbb E\left[\int_0^T\Phi_h(\Pi_t^*)dt|X_0^{\Pi^*}=x_0\right]\right)-V^{cl}(0,x_0;w),
\end{align}
and
\begin{align}\label{cost2}
	\widehat C^{u^*,\widehat\Pi^*}(0,x_0;w)=\left(\widehat V(0,x_0;w)+\lambda \mathbb E\left[\int_0^T\log\Phi_h(\widehat\Pi_t^*)dt|X_0^{\widehat \Pi^*}=x_0\right]\right)-V^{cl}(0,x_0;w).
\end{align}

\begin{proposition}\label{costtheorem}
	Suppose that statement (a) or (b) or (c) of Proposition \ref{eq} holds.  Then the cost of exploration for the EMV problem are, respectively, given as
	\begin{align}\label{cost01}
		C^{u^*,\Pi^*}(0,x_0;w)=\dfrac{\lambda^2\Vert h'\Vert_2^2}{4\rho^2\sigma^2}(e^{\rho^2T}-1),
	\end{align}
	and
\begin{align}\label{cost02}
	\widehat C^{u^*,\widehat\Pi^*}(0,x_0;w)=\dfrac{\lambda T}{2}~.
\end{align}

\end{proposition}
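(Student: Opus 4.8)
The plan is to reduce both identities to three mechanical ingredients and then combine them. In each case I would (i) read off the difference of value functions, $V(0,x_0;w)-V^{cl}(0,x_0;w)$ and $\widehat V(0,x_0;w)-V^{cl}(0,x_0;w)$, directly from Theorem \ref{nolog} (resp. Theorem \ref{log}) together with the formula for $V^{cl}$ in Proposition \ref{eq}(c); (ii) evaluate the running regularizer along the optimal path, where the key point is that $\Phi_h(\Pi^*_t)$ (resp. $\log\Phi_h(\widehat\Pi^*_t)$) turns out to be a \emph{deterministic} function of $t$, so that the conditional expectations in \eqref{cost1}--\eqref{cost2} are vacuous; and (iii) carry out an elementary time integral. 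Plugging these back into \eqref{cost1} and \eqref{cost2} should give \eqref{cost01} and \eqref{cost02} after cancellation.

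For \eqref{cost01}: by Lemma \ref{lem:liu} and Theorem \ref{nolog}, along $\Pi^*$ one has $\Phi_h(\Pi^*_t)=\sigma(\Pi^*_t)\Vert h'\Vert_2$, and since $\sigma(\Pi^*_t)^2=\lambda^2\Vert h'\Vert_2^2 e^{2\rho^2(T-t)}/(4\sigma^4)$ depends on $t$ only, so does $\Phi_h(\Pi^*_t)=\lambda\Vert h'\Vert_2^2 e^{\rho^2(T-t)}/(2\sigma^2)$; the expectation in \eqref{cost1} therefore disappears and $\lambda\int_0^T\Phi_h(\Pi^*_t)\d t=\lambda^2\Vert h'\Vert_2^2(e^{\rho^2 T}-1)/(2\rho^2\sigma^2)$, using $\int_0^T e^{\rho^2(T-t)}\d t=(e^{\rho^2 T}-1)/\rho^2$. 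On the other hand, subtracting the two value functions, the terms $(x_0-w)^2 e^{-\rho^2 T}$ and $-(w-z)^2$ cancel and one is left with $V(0,x_0;w)-V^{cl}(0,x_0;w)=-\lambda^2\Vert h'\Vert_2^2(e^{\rho^2 T}-1)/(4\rho^2\sigma^2)$. Adding the two pieces gives \eqref{cost01}.

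For \eqref{cost02}: analogously $\log\Phi_h(\widehat\Pi^*_t)=\log\left(\sigma(\widehat\Pi^*_t)\Vert h'\Vert_2\right)$ with $\sigma(\widehat\Pi^*_t)^2=\lambda e^{\rho^2(T-t)}/(2\sigma^2)$ from Theorem \ref{log}, hence $\log\Phi_h(\widehat\Pi^*_t)=\frac12\log\left(\lambda\Vert h'\Vert_2^2/(2\sigma^2)\right)+\frac12\rho^2(T-t)$, which is again deterministic; integrating and using $\int_0^T(T-t)\d t=T^2/2$ yields $\lambda\int_0^T\log\Phi_h(\widehat\Pi^*_t)\d t=\frac{\lambda T}{2}\log\left(\lambda\Vert h'\Vert_2^2/(2\sigma^2)\right)+\frac{\lambda\rho^2 T^2}{4}$. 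From Theorem \ref{log} and Proposition \ref{eq}(c), $\widehat V(0,x_0;w)-V^{cl}(0,x_0;w)=\frac{\lambda\rho^2 T^2}{4}-\frac{\lambda}{2}\left(\rho^2 T+\log(\lambda\Vert h'\Vert_2^2/(2e\sigma^2))\right)T$. Adding the two pieces, the $\rho^2 T^2$ contributions cancel ($\frac14-\frac12+\frac14=0$) and the logarithms combine via $\log(\lambda\Vert h'\Vert_2^2/(2\sigma^2))-\log(\lambda\Vert h'\Vert_2^2/(2e\sigma^2))=\log e=1$, leaving $\widehat C^{u^*,\widehat\Pi^*}(0,x_0;w)=\lambda T/2$.

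I do not expect a genuine obstacle; the one thing to watch is the bookkeeping — keeping straight the power of $\Vert h'\Vert_2$ (the factor from $\Phi_h(\Pi^*_t)=\sigma(\Pi^*_t)\Vert h'\Vert_2$ multiplies the $\Vert h'\Vert_2$ already inside $\sigma(\Pi^*_t)$), and observing that the constant $e$ inside the logarithm in the expression for $\widehat V$ is precisely what makes the cross term cancel so that $\widehat C$ ends up independent of $h$, $\rho$ and $\sigma$. If one wants to be fully rigorous about step (ii), one should note that the deterministic variance process along the optimum is continuous, hence integrable over $[0,T]$, so all interchanges of expectation and integral are legitimate, but this is routine.
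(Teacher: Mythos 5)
Your proposal is correct and follows the same route as the paper's proof: identify $\Phi_h(\Pi^*_t)=\sigma(\Pi^*_t)\Vert h'\Vert_2$ (resp.\ its logarithm) as a deterministic function of $t$ via Lemma \ref{lem:liu} and the variance formulas of Theorems \ref{nolog} and \ref{log}, substitute into \eqref{cost1}--\eqref{cost2}, and integrate. The paper leaves the final algebra implicit, whereas you have written it out (including the cancellation produced by the constant $e$ inside the logarithm), and all of your computations check out.
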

\begin{proof}
Note  that $$\Phi_h(\Pi^*_t)=\sigma(\Pi^*_t)\Vert h'\Vert_2=\dfrac{\lambda\Vert h'\Vert_2^2}{2\sigma^2}e^{\rho^2(T-t)},$$ and $$\log\Phi_h(\widehat\Pi^*_t)=\log\left(\sigma(\widehat\Pi^*_t)\Vert h'\Vert_2\right)=\frac{1}{2}\log\left(\dfrac{\lambda \Vert h'\Vert^2_2}{2\sigma^2}e^{\rho^2(T-t)}\right).$$ 
Bringing $\Phi_h(\Pi^*_t)$ and   $\log\Phi_h(\widehat\Pi^*_t)$ back into \eqref{cost1} and \eqref{cost2},  respectively, we can get \eqref{cost01} and \eqref{cost02}.\qed
\end{proof}
\begin{remark}
	The costs of exploration for the two EMV problems   are quite different.   When $\Phi_h$ is regarded as the regularizer, the derived exploration cost    does depend on the unknown model parameters  through $h$, $\mu$ and $\sigma$. \eqref{cost01} implies that, with other parameters being equal, to reduce the exploration cost one should choose regularizers with smaller  values of $\|h'\|_2$. 
 Moreover, by \eqref{variance}, we have 
	$$C^{u^*, \Pi^*}(0,x_0;w)=\frac{\lambda\|h'\|_2 }{2 \rho^2 }\sigma^*(x_0)-\dfrac{\lambda^2\Vert h'\Vert_2^2}{4\rho^2\sigma^2}, $$ meaning that the cost is proportional to the standardized deviation of the exploratory control, 	but  inversely proportional to  the square of the Sharp ratio $\rho^2$. 
In contrast,    when $\log\Phi_h$ is regarded as the regularizer,  the derived exploration cost only depends on $\lambda$ and $T$.    It is also interesting to note that 	$\widehat C^{u^*,\Pi^*}(0,x_0;w)$ in \eqref{cost02} is the same as the one using DE as the regularizer; see Theorem 3.4 of  \cite{WZ20}. 

Nevertheless, they also have some common features. 	The exploration cost  increases as  the exploration weight $\lambda$ and  the exploration horizon $T$  increase, due to more emphasis placed on exploration. In addition, the costs are both  independent of the Lagrange multiplier, which   suggests that the exploration cost will not increase when the agent is more aggressive (or risk-seeking) reflected by the expected target $z$ or equivalently the Lagrange multiplier $w$.
	\end{remark}

\begin{remark}	\label{costremark}	 
To compare   $C^{u^*,\Pi^*}(0,x_0;w)$ and $\widehat C^{u^*,\Pi^*}(0,x_0;w)$,  we have
	\begin{align*}
		\dfrac{C^{u^*,\Pi^*}(0,x_0;w)}{\widehat C^{u^*,\widehat\Pi^*}(0,x_0;w)}=\dfrac{\lambda\Vert h'\Vert_2^2}{2\sigma^2}\dfrac{e^{\rho^2T}-1}{\rho^2T}=\dfrac{\lambda\Vert h'\Vert_2^2}{2\sigma^2}\left(1+\sum\limits_{n=1}^{\infty}\dfrac{\rho^{2n}T^n}{(n+1)!}\right).
	\end{align*}
	Then we can easily verify which regularizer has smaller exploration cost under determined market parameters.  In general, from a cost point of view, when $\lambda$, $\Vert h'\Vert_2$  and $\rho^2$ are small enough and $\sigma$ is relatively large,  $\Phi_h$ is a good choice to reduce cost;  otherwise $\log \Phi_h$ may be a better choice.

	\end{remark}
\section{RL ALGORITHM DESIGN}\label{sec:5}
\subsection{Policy improvement}

In RL setting, the policy improvement is an important process which ensures the existence of a new policy better than any given policy. In Proposition \ref{prop:EMV_prob},  we have showed that the EMV problem in \eqref{vf} can be maximized within a location–scale family of distributions. Such a property  is  also applied to the   EMV problem in \eqref{dp1} when  $\log\Phi_h$ is regarded as the regularizer.
In  the following theorem, by Itô's formula,  we can also verify that for any given policy, when the regularizer is $\Phi_h$ or  $\log\Phi_h$, there always exists a better policy in a  location-scale family which depends on $h$. So we can search the optimal exploration distribution only in this location-scale family.

\begin{theorem}\label{pi}
	Let $w\in\mathbb R$ be fixed and $\Pi~(resp.~\widehat \Pi)$ be an arbitrarily given admissible feedback control whose corresponding value function is $V^{\Pi}(t,x;w)~(resp.~\widehat V^{\Pi}(t,x;w))$ under regularizer $\Phi_h~(resp.~\log\Phi_h)$. Suppose that $V^{\Pi}(t,x;w)~(resp.~\widehat V^{\Pi}(t,x;w))\in C^{1,2}([0,T)\times\mathbb R\cap C^0([0,T]\times\mathbb R))$ and $V^{\Pi}_{xx}(t,x;w)~(resp.~\widehat V^{\Pi}_{xx}(t,x;w))>0$ for any $(t,x)\in[0,T)\times\mathbb R$. Suppose further that the feedback control $\widetilde{\Pi}~(resp.~\widetilde{\widehat {\Pi}})$ whose quantile function is
	\begin{align}\label{pit}
		Q_{\widetilde{\Pi}}(p)&=-\dfrac{\rho}{\sigma}\dfrac{V_x^{\Pi}}{V_{xx}^{\Pi}}+\dfrac{\lambda}{\sigma^2V_{xx}^{\Pi}}h'(1-p)\\
		resp.~Q_{\widetilde{\widehat {\Pi}}}(p)&=-\dfrac{\rho}{\sigma}\dfrac{\widehat V_x^{\widehat\Pi}}{\widehat V_{xx}^{\widehat\Pi}}+\sqrt{\dfrac{\lambda}{\sigma^2\Vert h'\Vert^2_2\widehat V_{xx}^{\widehat\Pi}}}h'(1-p)\label{pit2}
	\end{align}
	is admissible. Then
	\begin{align*}
		V^{\widetilde{\Pi}}(t,x;w)&\leqslant V^{\Pi}(t,x;w),\ \ (t,x)\in [0,T)\times\mathbb R,\\
		resp.~\widehat V^{\widetilde{\widehat\Pi}}(t,x;w)&\leqslant \widehat V^{\widehat\Pi}(t,x;w),\ \ (t,x)\in [0,T)\times\mathbb R.
	\end{align*}
\end{theorem}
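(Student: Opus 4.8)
The plan is to treat $V^{\Pi}$ as a known smooth solution of the \emph{linear} PDE obtained by freezing the control at $\Pi$, to recognize $\widetilde\Pi$ as the pointwise minimizer of the associated Hamiltonian with $V$ replaced by $V^{\Pi}$, and then to run an It\^o-plus-localization argument along the dynamics driven by $\widetilde\Pi$ to turn the resulting differential inequality into $V^{\widetilde\Pi}\le V^{\Pi}$. I describe only the $\Phi_h$ case; the $\log\Phi_h$ case is verbatim the same, the only change being that the static minimization producing \eqref{pit2} replaces the term $-\lambda s\|h'\|_2$ by $-\lambda\log(s\|h'\|_2)$, which is still strictly convex in $s>0$ once $\widehat V^{\widehat\Pi}_{xx}>0$ and hence still has the unique interior minimizer appearing in \eqref{pit2}.

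First I would record the PDE satisfied by $V^{\Pi}$. Since $\Pi$ is a fixed feedback control, the Feynman--Kac representation \eqref{vfp}, together with the assumed $C^{1,2}([0,T)\times\R)\cap C^0([0,T]\times\R)$ regularity, gives
$$V^{\Pi}_t(t,x;w)+\tfrac{\sigma^2}{2}\bigl(\mu(\Pi_{t,x})^2+\sigma(\Pi_{t,x})^2\bigr)V^{\Pi}_{xx}(t,x;w)+\rho\sigma\mu(\Pi_{t,x})V^{\Pi}_x(t,x;w)-\lambda\Phi_h(\Pi_{t,x})=0$$
on $[0,T)\times\R$, with $V^{\Pi}(T,x;w)=(x-w)^2-(w-z)^2$, where $\Pi_{t,x}=\Pi(\cdot;t,x)$. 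Writing $\varphi^{\Pi}(t,x,\nu):=\tfrac{\sigma^2}{2}(\mu(\nu)^2+\sigma(\nu)^2)V^{\Pi}_{xx}+\rho\sigma\mu(\nu)V^{\Pi}_x-\lambda\Phi_h(\nu)$ for $\nu\in\mathscr M(\R)$, this reads $V^{\Pi}_t+\varphi^{\Pi}(t,x,\Pi_{t,x})=0$. The key step is to check that $\widetilde\Pi_{t,x}$ in \eqref{pit} minimizes $\varphi^{\Pi}(t,x,\cdot)$: this is exactly the computation of Section~\ref{sec:3} with $V$ replaced by $V^{\Pi}$, namely split the problem into an inner maximization of $\Phi_h$ over $\mathcal M^2(m,s^2)$ --- solved by Lemma~\ref{lem:liu}, value $s\|h'\|_2$, quantile $m+s\,h'(1-p)/\|h'\|_2$ --- followed by an outer minimization over $(m,s)$ of $\tfrac{\sigma^2}{2}(m^2+s^2)V^{\Pi}_{xx}+\rho\sigma m V^{\Pi}_x-\lambda s\|h'\|_2$; since $V^{\Pi}_{xx}>0$ this is strictly convex, and its first-order conditions $m^{*}=-\tfrac{\rho}{\sigma}V^{\Pi}_x/V^{\Pi}_{xx}$, $s^{*}=\lambda\|h'\|_2/(\sigma^2V^{\Pi}_{xx})$, inserted into the optimal quantile, recover \eqref{pit}. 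Hence $\varphi^{\Pi}(t,x,\widetilde\Pi_{t,x})\le\varphi^{\Pi}(t,x,\Pi_{t,x})=-V^{\Pi}_t(t,x;w)$, i.e.
$$V^{\Pi}_t(t,x;w)+\tfrac{\sigma^2}{2}\bigl(\mu(\widetilde\Pi_{t,x})^2+\sigma(\widetilde\Pi_{t,x})^2\bigr)V^{\Pi}_{xx}(t,x;w)+\rho\sigma\mu(\widetilde\Pi_{t,x})V^{\Pi}_x(t,x;w)\le\lambda\Phi_h(\widetilde\Pi_{t,x}).$$

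Finally I would apply It\^o's formula to $s\mapsto V^{\Pi}(s,X^{\widetilde\Pi}_s;w)$ on $[t,T]$, where $X^{\widetilde\Pi}$ solves \eqref{emv2} under $\widetilde\Pi$ with $X^{\widetilde\Pi}_t=x$. The drift equals the left-hand side of the last display evaluated along $X^{\widetilde\Pi}$, hence is $\le\lambda\Phi_h(\widetilde\Pi_s)$, so with a localizing sequence $\tau_n\uparrow T$,
$$V^{\Pi}(T\wedge\tau_n,X^{\widetilde\Pi}_{T\wedge\tau_n};w)\le V^{\Pi}(t,x;w)+\lambda\int_t^{T\wedge\tau_n}\Phi_h(\widetilde\Pi_v)\,\d v+M_{T\wedge\tau_n},$$
$M$ being the stochastic-integral term with $M_t=0$. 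Taking expectations kills $M_{T\wedge\tau_n}$; letting $n\to\infty$ and using the admissibility of $\widetilde\Pi$ (conditions (iii)--(iv)) together with the growth of $V^{\Pi}$ to justify the limit (dominated convergence on the left and in the running integral), the terminal condition yields
$$\E\bigl[(X^{\widetilde\Pi}_T-w)^2\bigr]-(w-z)^2-\lambda\,\E\Bigl[\int_t^T\Phi_h(\widetilde\Pi_v)\,\d v\,\Big|\,X^{\widetilde\Pi}_t=x\Bigr]\le V^{\Pi}(t,x;w),$$
whose left-hand side is $V^{\widetilde\Pi}(t,x;w)$ by \eqref{vfp}. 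This proves the claim, and the $\log\Phi_h$ statement follows by the same three steps.

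I expect the main obstacle to be this last measure-theoretic step --- localizing the It\^o expansion and showing the local martingale contributes nothing in the limit --- since this is precisely where the admissibility of the \emph{improved} control $\widetilde\Pi$ and the $C^{1,2}\cap C^0$ (implicitly polynomial-growth) regularity of $V^{\Pi}$ must enter; in \cite{WZZ20a} and \cite{WZ20} the analogous passage is handled by a verification-type estimate, and the same kind of estimate is what is needed here. By contrast, the identification of $\widetilde\Pi$ with the Hamiltonian minimizer (via Lemma~\ref{lem:liu} and the strict convexity coming from $V^{\Pi}_{xx}>0$) and the reduction of the logarithmic case to the same argument are routine given Section~\ref{sec:3}.
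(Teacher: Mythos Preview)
Your proposal is correct and matches the paper's own proof essentially step for step: the paper also derives the linear PDE for $V^{\Pi}$ (invoked there as the ``standard argument''), identifies $\widetilde\Pi$ as the minimizer of the frozen Hamiltonian via \eqref{ms}, applies It\^o's formula to $V^{\Pi}(s,X_s^{\widetilde\Pi})$, localizes with stopping times $\tau_n$ to kill the stochastic integral, and passes to the limit $s=T$, $n\to\infty$. Your write-up is in fact more explicit about why $\widetilde\Pi$ minimizes (via Lemma~\ref{lem:liu} and the strict convexity from $V^{\Pi}_{xx}>0$) and about the $\log\Phi_h$ modification, but there is no substantive difference in method.
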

\begin{proof}
	Let $\widetilde{\Pi}=\{\widetilde{\Pi}_s,s\in[t,T]\}$ and $\widetilde{\widehat\Pi} =\{\widetilde{\widehat\Pi} _s,s\in[t,T]\}$ be the open-loop control generated by the given feedback control policies $\widetilde{\Pi}$ and $\widetilde{\widehat\Pi}$, respectively. By assumption, ${\widetilde{\Pi}}$ and $\widetilde{\widehat\Pi}$ are admissible. Applying Itô's formula, we have for any $(t,x)\in [0,T]\times \mathbb R$,
	\begin{align}\label{ito}
		\begin{split}
		V^{\Pi}(s,X_s^{\widetilde{\Pi}})&=V^{\Pi}(t,x)+\int_t^s
		V_t^{\Pi}(v,X_v^{\widetilde{\Pi}})\d v+\int_t^s V_x^{\Pi}(v,X_v^{\widetilde{\Pi}})\d X_v^{\widetilde{\Pi}}\\
		&\quad+\dfrac{1}{2}\int_t^sV_{xx}^{\Pi}(v,X_v^{\widetilde{\Pi}})\d<X^{\widetilde{\Pi}},X^{\widetilde{\Pi}}>_v\\
		&=V^{\Pi}(t,x)+\int_t^sV_x^{\Pi}(v,X_v^{\widetilde{\Pi}})\sigma\sqrt{\mu(\widetilde{\Pi}_v)^2+\sigma(\widetilde{\Pi}_v)^2}\d W_v\\
		&\quad+\int_t^s[V_t^{\Pi}(v,X_v^{\widetilde{\Pi}})+\rho\sigma\mu(\widetilde{\Pi}_v)V_x^{\Pi}(v,X_v^{\widetilde{\Pi}})+\dfrac{\sigma^2}{2}(\mu(\widetilde{\Pi}_v)^2+\sigma(\widetilde{\Pi}_v)^2)V_{xx}^{\Pi}(v,X_v^{\widetilde{\Pi}})]\d v.
		\end{split}
	\end{align}
	 Let $\tau_n:=\inf\{s\geqslant t:\int_t^s \sigma^2 V_x^{\Pi}(v,X_v^{\widetilde{\Pi}})^2(\mu(\widetilde{\Pi}_v)^2+\sigma(\widetilde{\Pi}_v)^2)\d v\geqslant n\}$ be a family of stopping times, then substituting $s\wedge\tau_n$ into \eqref{ito} and taking expectation we get
	 \begin{align}\label{ex}
	 	\begin{split}
	 		V^{\Pi}(t,x)&=\mathrm E\left[ V^{\Pi}(s\wedge\tau_n,X_{s\wedge\tau_n}^{\widetilde{\Pi}})-\int_t^{s\wedge\tau_n}[V_t^{\Pi}(v,X_v^{\widetilde{\Pi}})+\rho\sigma\mu(\widetilde{\Pi}_v)V_x^{\Pi}(v,X_v^{\widetilde{\Pi}})\right.\\
	 	&\quad+\left.\dfrac{\sigma^2}{2}(\mu(\widetilde{\Pi}_v)^2+\sigma(\widetilde{\Pi}_v)^2)V_{xx}^{\Pi}(v,X_v^{\widetilde{\Pi}})]\d v|X_t^{\widetilde{\Pi}}=x \right].
	 	\end{split}
	 \end{align}
	 On the other hand, by standard argument we have
	 \begin{align*}
	 	V_t^{\Pi}(t,x)+\rho\sigma\mu(\Pi)V_x^{\Pi}(t,x)+\dfrac{\sigma^2}{2}(\mu(\Pi)^2+\sigma(\Pi)^2)V_{xx}^{\Pi}(t,x)-\lambda\Phi_h(\Pi)=0.
	 \end{align*}
	 It follows that
	 \begin{align}\label{sa}
	 	V_t^{\Pi}(t,x)+\min\limits_{\Pi'\in\mathscr{P}(\mathbb R)}\left[\rho\sigma\mu(\Pi')V_x^{\Pi}(t,x)+\dfrac{\sigma^2}{2}(\mu(\Pi')^2+\sigma(\Pi')^2)V_{xx}^{\Pi}(t,x)-\lambda\Phi_h(\Pi')\right]\leqslant 0.
	 \end{align}
	 By \eqref{ms}, we know $\widetilde{\Pi}$ is the minimizer of \eqref{sa}. Substituting $\widetilde{\Pi}$ into \eqref{sa} and bringing back to \eqref{ex} we have
	 \begin{align}\label{ts}
	 	V^{\Pi}(t,x)\geqslant\mathrm E\left[V^{\Pi}(s\wedge\tau_n,X_{s\wedge\tau_n}^{\widetilde{\Pi}})-\int_t^{s\wedge\tau_n}\lambda\Phi_h(\widetilde{\Pi}_v)dv|X_t^{\widetilde{\Pi}}=x\right].
	 \end{align}
	 Taking $s=T$ in \eqref{ts} and sending $n$ to $\infty$, we obtain 
	 \begin{align*}
	 	V^{\Pi}(t,x)\geqslant \mathrm E\left[V^{\widetilde{\Pi}}(T,X_T^{\widetilde{\Pi}})-\lambda\int_t^T\Phi_h(\widetilde{\Pi}_v)dv|X_t^{\widetilde{\Pi}}=x\right]=V^{\widetilde{\Pi}}(t,x).
	 \end{align*}
	 The proof of regularizer $\log\Phi_h$ is almost the same, so we omit it.\qed
\end{proof}

\begin{theorem}\label{update}
	Let $\Pi^0(u;t,x,w)$ be a feedback control which has quantile function
	\begin{align}\label{initialdistribution}
		Q_{\Pi^0}(p)=Q_{\widehat\Pi^0}(p)=a(x-w)+c_1e^{c_2(T-t)}h'(1-p),
	\end{align}
	and $\{\Pi^n(u;t,x,w)\}$  and  $\{\widehat\Pi^n(u;t,x,w)\}$ be the sequence of feedback controls updated by \eqref{pit} and \eqref{pit2}, respectively. Denoted by $\{V^{\Pi^n}(t,x;w)\}$  and $\{\widehat V^{\widehat\Pi^n}(t,x;w)\}$ the sequence of corresponding value functions. Then
	\begin{align*}
		\lim\limits_{n\rightarrow\infty}\Pi^n(\cdot;t,x,w)&=\Pi^*(\cdot;t,x,w)\ \ 
		\text{weakly},\\
resp.~\lim\limits_{n\rightarrow\infty}\widehat\Pi^n(\cdot;t,x,w)&=\widehat\Pi^*(\cdot;t,x,w)\ \ 
		\text{weakly},	\end{align*}
	and
	\begin{align*}
		\lim\limits_{n\rightarrow\infty}V^{\Pi^n}(t,x;w)&=V(t,x;w),\ \ (t,x)\in [0,T),\\resp.~\lim\limits_{n\rightarrow\infty}\widehat V^{\widehat\Pi^n}(t,x;w)&=\widehat V(t,x;w),\ \ (t,x)\in [0,T)\times\mathbb R,
	\end{align*}
	for any $(t,x,w)\in[0,T]\times\mathbb R\times\mathbb R$, where $\Pi^*$ and $\widehat\Pi^*$  in \eqref{quantilefunction} and \eqref{quantilefunction1} are the optimal controls,  and  $V$ and $\widehat V$ are the value functions given by  \eqref{valuefunction} and \eqref{valuefunction1}.	
\end{theorem}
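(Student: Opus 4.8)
The plan is to show that policy iteration never leaves the two‑parameter family of location--scale controls on which the problem already lives, to reduce the iteration to an explicit recursion for two scalar coefficient functions, and to read off from that recursion that the iterates are eventually constant and equal to the optimal objects; the asserted weak convergence of the controls and pointwise convergence of the value functions are then immediate. I describe the $\Phi_h$-case in detail; the $\log\Phi_h$-case is entirely parallel, using \eqref{pit2} in place of \eqref{pit}.

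\textbf{Step 1 (a quadratic ansatz is preserved; policy evaluation).} By induction on $n$ I would prove that $\Pi^n$ has quantile function $Q_{\Pi^n}(p)=\alpha_n(x-w)+\beta_n(t)h'(1-p)$ for a constant $\alpha_n$ and a strictly positive $\beta_n$, and that $V^{\Pi^n}(t,x;w)=A_n(t)(x-w)^2+B_n(t)-(w-z)^2$ with $A_n>0$ on $[0,T)$. The base case is \eqref{initialdistribution}, with $\alpha_0=a$ and $\beta_0(t)=c_1e^{c_2(T-t)}>0$. For a control of this form one has, using $\int_0^1 h'(1-p)\,\mathrm dp=h(1)-h(0)=0$, mean $\alpha_n(x-w)$ and variance $\beta_n(t)^2\|h'\|_2^2$, and, by location invariance and scale homogeneity of $\Phi_h$ together with Lemma \ref{lem:liu}, $\Phi_h(\Pi^n)=\beta_n(t)\|h'\|_2^2$. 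Hence the policy-evaluation equation for $V^{\Pi^n}$ — namely \eqref{hjb} with the minimization removed and $\Pi=\Pi^n$ inserted — is a \emph{linear} PDE with polynomial coefficients, and plugging in the quadratic ansatz reduces it to two linear ODEs, one for $A_n$ with $A_n(T)=1$ and one for $B_n$ with $B_n(T)=0$, both explicitly solvable. A Feynman--Kac verification, of the same localization/integrability type as in the proof of Theorem \ref{pi}, identifies this classical solution with $V^{\Pi^n}$; since $V^{\Pi^n}_{xx}=2A_n>0$, the next update \eqref{pit} is well defined and $\Pi^{n+1}$ is again of the above form and admissible, closing the induction.

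\textbf{Step 2 (the scalar recursion collapses).} Substituting $V^{\Pi^n}_x=2A_n(t)(x-w)$ and $V^{\Pi^n}_{xx}=2A_n(t)$ into \eqref{pit} gives $\alpha_{n+1}=-\rho/\sigma$ and $\beta_{n+1}(t)=\lambda/(2\sigma^2A_n(t))$ for every $n\ge0$. The key point is that $\alpha_{n+1}=-\rho/\sigma$ irrespective of the input, so feeding this value into the $(x-w)^2$-coefficient ODE from Step 1 collapses it to $A_{n+1}'=\rho^2A_{n+1}$, whence $A_n(t)=e^{-\rho^2(T-t)}$ for all $n\ge1$, and then $\beta_n(t)=\frac{\lambda}{2\sigma^2}e^{\rho^2(T-t)}$ for all $n\ge2$; solving the corresponding $B_n$-ODE returns exactly the function in \eqref{valuefunction}. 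Thus $\Pi^n$ coincides with $\Pi^*$ of \eqref{quantilefunction} and $V^{\Pi^n}$ with $V$ of \eqref{valuefunction} for all $n\ge2$ (and likewise $\widehat\Pi^n=\widehat\Pi^*$, $\widehat V^{\widehat\Pi^n}=\widehat V$ for $n\ge2$, via \eqref{quantilefunction1} and \eqref{valuefunction1}); in particular the sequences are eventually stationary, so the claimed limits hold. As a cross-check, Theorem \ref{pi} independently yields $V^{\Pi^{n+1}}\le V^{\Pi^n}$; a ``softer'' proof not using the explicit ODEs would combine this monotonicity with the lower bound $V^{\Pi^n}\ge V$ to extract a decreasing limit, show the limit solves \eqref{hjb3} by passing to the limit in the policy-evaluation equations, and conclude by uniqueness of the HJB solution — but the explicit route above is shorter and also gives the (finite) rate.

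\textbf{Expected main obstacle.} The only genuinely delicate point is the Feynman--Kac verification in Step 1: one must check that under each feedback control $\Pi^n$ the wealth SDE \eqref{emv2} has a unique solution with enough integrability — e.g. $\mathbb E[\max_{0\le t\le T}(X^{\Pi^n}_t)^2]<\infty$ — so that the stochastic integral appearing in Itô's formula is a true martingale and the quadratic PDE solution genuinely represents $V^{\Pi^n}$; this is also exactly what guarantees admissibility of $\Pi^{n+1}$ and keeps the induction running. Everything else is linear-ODE bookkeeping.
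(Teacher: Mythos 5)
Your proposal is correct and follows essentially the same route as the paper: evaluate $V^{\Pi^0}$ via Feynman--Kac under the quadratic ansatz, observe that the update \eqref{pit} forces the mean coefficient to $-\rho/\sigma$ regardless of the input so that the $(x-w)^2$-coefficient becomes $e^{-\rho^2(T-t)}$ after one step, and conclude that $\Pi^2$ already coincides with the optimal control \eqref{quantilefunction}, making the sequence stationary from $n=2$ on. The paper simply carries out the two explicit iterations rather than phrasing it as an induction, but the content is identical.
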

\begin{proof}  Here we only provide the detailed proof for the case of $\Phi_h$, and the results of $\log\Phi_h$ can be derived in the same way. 
Let $\{\Pi^0_s\}$ be the open-loop control generated by $\Pi^0$. We can verify that $\{\Pi^0_s\}$ is admissible.
	The dynamic of wealth under $\Pi^0$ is
	\begin{align*}
		\d X_t^{\Pi^0}=\rho\sigma\mu(\Pi^0)\d t+\sigma\sqrt{\mu(\Pi^0)^2+\sigma(\Pi^0)^2}\d W_t,\ \ X_t^{\Pi^0}=x,
	\end{align*}
	and the value function under $\Pi^0$ is
	\begin{align*}
		V^{\Pi^0}(t,x)=\mathrm E\left[\int_t^T-\lambda\Phi_h(\Pi_v^0)dv+(X_T^{\Pi^0}-w)^2|X_t^{\Pi^0}=x\right]-(w-z)^2.
	\end{align*}
	By Feynman–Kac formula, we deduce that $V^{\Pi^0}$ satisfies the following PDE
	\begin{align*}
		V_t(t,x)+\rho\sigma\mu(\Pi^0)V_x(t,x)+\dfrac{1}{2}\sigma^2(\mu(\Pi^0)^2+\sigma(\Pi^0)^2)V_{xx}(t,x)-\lambda\Phi_h(\Pi^0)=0,
	\end{align*}
	with terminal condition $V^{\Pi^0}(T,x)=(x-w)^2-(w-z)^2$. Solving this equation we obtain
	\begin{align*}
		V^{\Pi^0}(t,x;w)=(x-w)^2e^{(2\rho\sigma a+\sigma^2a^2)(T-t)}	+F_0(t),
	\end{align*}
	where $F_0(t)$ is a smooth function which only depends on $t$. Obviously, $V^{\Pi^0}(t,x;w)$ satisfies the conditions of Theorem \ref{pi}, so we can use \eqref{pit} to obtain $\Pi^1$ whose quantile function is	\begin{align*}
		Q_{\Pi^1}(p)&=-\dfrac{\rho}{\sigma}(x-w)+\dfrac{\lambda h'(1-p)}{2\sigma^2e^{(2\rho\sigma a+\sigma^2a^2)(T-t)}},
	\end{align*}	with \begin{align*}
		\mu(\Pi^1)=-\dfrac{\rho}{\sigma}(x-w),~~~\text{and}~~~
		\sigma^2(\Pi^1)=\dfrac{\lambda^2\Vert h'\Vert_2^2}{4\sigma^2e^{2(2\rho\sigma a+\sigma^2 a^2)(T-t)}}.
	\end{align*}
	By repeating the above program with $\Pi^1$, we have
	\begin{align*}
		V^{\Pi^1}(t,x;w)=(x-w)^2e^{-\rho^2(T-t)}	+F_1(t),
	\end{align*}
	where $F_1(t)$ is a smooth function which only depends on $t$. Using Theorem \ref{pi} again we obtain $\Pi^2$ whose quantile function is
	\begin{align*}
		Q_{\Pi^2}(p)=-\dfrac{\rho}{\sigma}(x-w)+\dfrac{\lambda h'(1-p)}{2\sigma^2}e^{\rho^2(T-t)},	\end{align*} with 
		\begin{align*}	\mu(\Pi^2)=-\dfrac{\rho}{\sigma}(x-w),~~~~\text{and}~~~~
		\sigma^2(\Pi^2)=\dfrac{\lambda^2\Vert h'\Vert_2^2}{4\sigma^4}e^{2\rho^2(T-t)}.
	\end{align*}
	By \eqref{quantilefunction}-\eqref{variance}, we know that $\Pi^2$ is optimal.\qed
\end{proof}
The above theorem shows that when designing a RL algorithm, the distribution with the quantile form \eqref{initialdistribution} can be selected as the initial distribution to ensure the convergence.

\subsection{The EMV algorithm}
In this section, we aim to solve \eqref{vf} and \eqref{dp1} by assuming that  there is no knowledge about the underlying parameters. One method to overcome this problem is to replace the parameters by their estimations. However, as mentioned in Introduction, the estimations are usually very sensitive to the sample.  We will give an offline RL algorithm based on the Actor-Critic algorithm in \cite{KT99}, \cite{SB18}  and \cite{JZ22b}. The Actor-Critic algorithm is essentially a policy-based algorithm, but additionally learns the value function in order to help the policy function learn better. Meanwhile, we use a self-correcting scheme in \cite{WZ20} to learn the Lagrange multiplier $w$. 

 Here, we only  present the RL algorithm for the case of  $\Phi_h$ to solve \eqref{vf}.  
When using $\log\Phi_h$ as the regularizer, we  only  need to replace $\Phi_h$ by $\log\Phi_h$ and  modify the parameterization appropriately.

In continuous-time setting, we first discretize $[0,T]$ into $N$ small intervals $[t_i,t_{i+1}], (i=0,1,...,N-1)$ whose length is equal to ${T}/{N}=\Delta t$. We use policy gradient principle to update Actor; and for Critic, \cite{JZ22a} showed that the time-discretized algorithm converges as $\Delta t \rightarrow 0$ as long as the corresponding discrete-time algorithms converges, thus we adopt a learning approach of temporal difference error (the TD error; see \cite{D20} and \cite{WZ20}).  
Assume that  $\Pi $ is a given admissible feedback policy and let $\mathscr D=\{(t_i,x_{t_i}),i=0,1,...,N\}$ be a set of samples, the initial sample is $(0,x_0)$, then for $i=1,2,...,N$, we sample $u_{t_{i-1}}$ from $\Pi_{t_{i-1}}$ and get $x_{t_i}$ at $t_i$. 

On the one hand, we have
\begin{align*}
	V^{\Pi }(t,x)=\mathrm{E}\left[(X_T^{\Pi }-w)^2-\lambda\int_t^T\Phi_h(\Pi_s)ds|X_t^{\Pi }=x\right]-(w-z)^2,
\end{align*}
so the TD error at $t_i$ is
\begin{align*}
	\delta_i=-\lambda\Phi_h(\Pi_{t_i})\Delta t+V^{\Pi }(t_{i+1},X_{t_{i+1}})-V^{\Pi }(t_{i},X_{t_{i}}),\quad i = 0,1,...,N-1.
\end{align*}

On the other hand, based on \eqref{valuefunction}, we can parameterize the Critic value by
\begin{align*}
	V^{\theta}(t,x)=(x-w)^2e^{-\theta_2(T-t)}-\theta_1e^{\theta_0(T-t)}-(w-z)^2.
\end{align*}
For a single point $t_i$, we define the loss function as 
\begin{align}\label{lossfunction}
	L(\theta)=\dfrac{1}{2}(U_{t_i}-V^{\theta}(t_{i},X_{t_{i}}))^2,
\end{align}
where $U_{t_i}$ is the estimation of $V(t_{i},X_{t_{i}})$. We take $U_{t_i}$ a bootstrapping estimate $-\lambda\Phi_h(\Pi_{t_i})\Delta t+V^{\theta}(t_{i+1},X_{t_{i+1}})$ in \eqref{lossfunction} as the temporal difference target which will not generate gradient to update the value function automatically. So the gradient of the loss function is
\begin{align}\label{lossgradient}
	\nabla_{\theta}L(\theta)=-(-\lambda\Phi_h(\Pi_{t_i})\Delta t+V^{\theta}(t_{i+1},X_{t_{i+1}})-V^{\theta}(t_{i},X_{t_{i}}))\nabla_{\theta}V^{\theta}(t_i,X_{t_i}).
\end{align}
Let $\alpha_{\theta}$ be the learning rate of $\theta$, then by \eqref{lossgradient}, we can get the gradient and the update rule of $\theta$ with a set of sample $\mathscr D$:
\begin{align}\label{thetagradient}
	\nabla\theta =-\sum\limits_{i=0}^{N-1}\dfrac{\partial V^{\theta}}{\partial \theta}(t_i,x_{t_i})[V^{\theta}(t_{i+1},x_{t_{i+1}})-V^{\theta}(t_{i},x_{t_{i}})-\lambda \Phi_h(\Pi_{t_i}^{\phi})\Delta t],
\end{align} and 
\begin{align}\label{thetaupdate}
	\theta \longleftarrow \theta - \alpha_{\theta}\nabla\theta.
\end{align}
Based on Theorem \ref{update}, we can parameterize the policy by $\Pi ^{\phi}$ with quantile function 
\begin{align*}
	Q_{\Pi_t^{\phi}}(p)=-\phi_0(x-w)+e^{\frac{1}{2	}\phi_1+\frac{1}{2}\phi_2(T-t)}h'(1-p).
\end{align*}
By Lemma 2.3 of  \cite{HWZ23}, we know  that
\begin{align*}
	\Phi_h(\Pi_t^{\phi})=\displaystyle\int_0^1(-\phi_0(x-w)+e^{\frac{1}{2	}\phi_1+\frac{1}{2}\phi_2(T-t)}h'(p)^2)\d p.
\end{align*}
Let  $g(t,x;\phi)=\nabla_{\theta}V^{\Pi ^{\phi}}(t,x)$ be the policy gradient of $\Pi ^{\phi}$ and $p(t,\phi)=\Phi_h(\Pi_t^{\phi})$,  together with Theorem 5 of  \cite{JZ22b},  $g(t,x;\phi)$ has the following representation:
\begin{align}\label{policygradient}
	g(t,x;\phi)=\mathrm E\left[\displaystyle\int_t^T\left\{\dfrac{\partial}{\partial\phi}\log\dot \Pi^{\phi}_t\left(\d V^{\Pi _{\phi}}(s,X_s^{\Pi ^{\phi}})-\lambda p(s,\phi)\d s\right)-\lambda \dfrac{\partial p}{\partial \phi}(s,\phi)\d s\right\}\left|X_t^{\Pi ^{\phi}}=x\right.\right],
\end{align}
where $\dot \Pi^{\phi}_t$ is the  density function of  $ \Pi^{\phi}_t$.  Let $\alpha_{\phi}$ be the learning rate of $\phi$, then by \eqref{policygradient}, we can also get the gradient and the update rule of $\theta$ with a set of sample $\mathscr D$:
\begin{align}\label{phigradient}
	\begin{split}
		\nabla\phi =&\sum\limits_{i=0}^{N-1}\left\{\dfrac{\partial }{\partial \phi}\right.\log\dot  \Pi^{\phi}(u_{t_i}|t_i,x_{t_i})[V^{\theta}(t_{i+1},x_{t_{i+1}})-V^{\theta}(t_{i},x_{t_{i}})-\lambda \Phi_h(\Pi_{t_i}^{\phi})\Delta t]\\
	&-\left.\lambda \dfrac{\partial p}{\partial \phi}(t_i,x_{t_i},\phi)\Delta t\right\},
	\end{split}
\end{align} and 
\begin{align}\label{phiupdate}
	\phi \longleftarrow \phi - \alpha_{\phi}\nabla\phi.
\end{align}
Let $\alpha_w$ be the learning rate of $\phi$, then by the constraint $\mathrm E[X_T]=z$ we can get the standard stochastic approximation update rule:
\begin{align*}
	w_{n+1}=w_n-\alpha_w(\dfrac{1}{m}\sum\limits_{i=j-m+1}^jx_T^{(i)} -z),
\end{align*}
where $x_T^{(i)}$ is the last point of sample $i$ and $j\equiv 0 \mod m$.

We summarize the algorithm as pseudocode in \textbf{Algorithm 1}. 
\begin{algorithm}
	\caption{Actor-Critic Algorithm for EMV Problem}	
	\textbf{Input:} initial wealth $x_0$, the parameters ($\mu, \sigma, r$) of Market, the target $z$, exploration weight $\lambda$, investment horizon $T$, time step $\Delta t$, number of time grids $N$, learning rates $\alpha_{\theta},\ \alpha_{\phi}, \alpha_w$, number of episodes $K$, sample average size $m$, and a simulator of the market called $Market$.\\
	\textbf{Learning procedure:}
	Initialize $\theta,\ \phi,\ w$.
	\begin{algorithmic}
		\FOR {episode $j=1$ \textbf{to} $K$}
		\STATE Initialize $n=0$
		\STATE  $x_{t_n} \leftarrow x_0$
		\WHILE {$n<N$}
		\STATE Compute and store $\dfrac{\partial}{\partial \theta}V^{\theta}(t_n,x_{t_n})$
		\STATE Sample $u_{t_n}$ from $\Pi^{\phi}(\cdot |t_n,x_{t_n} )$.
		\STATE Compute and store $p(t_n,x_{t_n},\phi)$.
		\STATE Compute and store $\dfrac{\partial p}{\partial\phi} (t_n,x_{t_n},\phi).$
		\STATE Compute and store $\dfrac{\partial}{\partial\phi}\log\dot \Pi^{\phi}(u_{t_n}|t_n,x_{t_n})$.
		\STATE Apply $u_{t_n}$ to the market simulator and get the state $x$ at next time point. 
		\STATE Store $x_{t_{k+1}}\leftarrow x$.
		\STATE $k\leftarrow k+1$.
		\ENDWHILE
		\STATE Store the terminal wealth $x_T^{(j)}\leftarrow x_{t_N}$.
		\STATE Compute the gradient of $\theta$ and $\phi$ by \eqref{thetagradient} and \eqref{phigradient}, respectively. 
		\begin{align*}
			\nabla\theta =-\sum\limits_{i=0}^{N-1}\dfrac{\partial V^{\theta}}{\partial \theta}(t_i,x_{t_i})[V^{\theta}(t_{i+1},x_{t_{i+1}})-V^{\theta}(t_{i},x_{t_{i}})-\lambda p(t_i,x_{t_i},\phi)\Delta t],
		\end{align*}
		and
		\begin{align*}
			\nabla\phi =&\sum\limits_{i=0}^{N-1}\left\{\dfrac{\partial \log \dot \Pi^{\phi} }{\partial \phi}\right.(u_{t_i}|t_i,x_{t_i})[V^{\theta}(t_{i+1},x_{t_{i+1}})-V^{\theta}(t_{i},x_{t_{i}})-\lambda p(t_i,x_{t_i},\phi)\Delta t]\\
			&-\left.\lambda \dfrac{\partial p}{\partial \phi}(t_i,x_{t_i})\right\}.
		\end{align*}
		\STATE Update $\theta$ and $\phi$ by \eqref{thetaupdate} and \eqref{phiupdate}, respectively.
		\begin{align*}
			\theta \longleftarrow \theta - \alpha_{\theta}l(j)\nabla\theta
		\end{align*}
		\begin{align*}
			\phi \longleftarrow \phi - \alpha_{\phi}l(j)\nabla\phi
		\end{align*}
		\STATE Update $w$ every $m$ episodes:
		\IF {$j\equiv 0 \mod\ m$}
		\STATE $$w\leftarrow w-\alpha_w(\dfrac{1}{m}\sum\limits_{i=j-m+1}^{j}x_T^{(i)}-z).$$
		\ENDIF
		\ENDFOR
	\end{algorithmic}
\end{algorithm}
\section{SIMULATION}\label{sec:6}
In this section, we conduct simulations and test our algorithm presented in \textbf{Algorithm 1}. In our setting, we take investment horizon to be $T=1$ and time step to be $\Delta t=\frac{1}{252}$, which can be interpreted as the MV problem considered over one-year period, and then the number of time grids is $N=252$ naturally. We can take the annualized interest rate to be $r=2\%$ and take the annualized return $\mu$ and volatility $\sigma$ from $\{-50\%,\ -30\%,\ -10\%,\ 10\%,\ 30\%,\ 50\%\}$ and $\{10\%,\ 20\%,\ 30\%,\ 40\%\}$, respectively. Let the initial wealth to be $x_0=1$ and the annualized target return on the terminal wealth is $40\%$ which yields $z=1.4$.

For our algorithm, we take the number of episodes $K=20000$, and take the sample average size for Lagrange multiplier $m=10$. Based on Proposition \ref{costtheorem} and Remark \ref{costremark}, to control their exploration costs, the exploration weight $\lambda$ is taken as $0.01$ when we apply $\Phi_h$ as  the regularizer, and $0.1$ for  $\log\Phi_h$ being  the regularizer. The learning rates are taken as $\alpha_{\theta}=\alpha_{\phi}=\alpha_{w}=0.01$ with decay rate $l(j)=j^{-0.51}$. 

Based on Examples \ref{exm:3.4} and \ref{exm:4.3}, we mainly investigate the simulation results for three  exploration distributions:  Gaussian, exponential distribution and uniform distribution. We present the mean and the variance of the last 200 terminal wealth, and the corresponding Sharpe ratio $(\frac{mean-1}{\sqrt{variance}})$. The simulation results of our algorithm are presented in Tables \ref{table:normal}--\ref{table:uniform}.

\begin{table}[ht!]
\centering
\caption{Performance of  Gaussian with  $h(p)=\int_0^p z(1-s)\d s$}\label{table:normal}
	\begin{tabular}{cc|ccc|ccc}
		\toprule
		\multicolumn{1}{c}{\multirow{2}{*}{$\mu$}}&\multicolumn{1}{c|}{\multirow{2}{*}{$\sigma$}}&\multicolumn{3}{c|}{$\Phi_h$}&\multicolumn{3}{c}{$\log\Phi_h$}\\ \cline{3-8}
		 & & Mean & Variance & Sharpe ratio & Mean & Variance & Sharpe ratio\\ \hline
		-0.5& 0.1 & 1.4052 & 0.0035 & 6.8192 & 1.4052 & 0.0037 & 6.6520 \\
		-0.3& 0.1 & 1.4141 & 0.0103 & 4.0852 & 1.4143 & 0.0104 & 4.0554 \\
		-0.1& 0.1 & 1.4479 & 0.1104 & 1.3482 & 1.4485 & 0.1107 & 1.3482 \\
		0.1 & 0.1 & 1.3966 & 0.2516 & 0.7906 & 1.3970 & 0.2571 & 0.7828 \\
		0.3 & 0.1 & 1.4052 & 0.0408 & 2.0043 & 1.4055 & 0.0441 & 1.9307 \\
		0.5 & 0.1 & 1.4007 & 0.0247 & 2.5722 & 1.4007 & 0.0267 & 2.4519 \\
		
		-0.5& 0.2 & 1.4078 & 0.0147 & 3.3654 & 1.4077 & 0.0153 & 3.2939 \\
		-0.3& 0.2 & 1.4208 & 0.0458 & 1.9668 & 1.4209 & 0.0464 & 1.9534 \\
		-0.1& 0.2 & 1.4557 & 0.5046 & 0.6416 & 1.4552 & 0.5038 & 0.6413 \\
		0.1 & 0.2 & 1.3576 & 0.8506 & 0.3878 & 1.3575 & 0.8643 & 0.3846 \\
		0.3 & 0.2 & 1.3967 & 0.1402 & 1.0595 & 1.3966 & 0.1487 & 1.0284 \\
		0.5 & 0.2 & 1.3943 & 0.0739 & 1.4506 & 1.3941 & 0.0799 & 1.3945 \\
		
		-0.5& 0.3 & 1.4118 & 0.0368 & 2.1456 & 1.4117 & 0.0382 & 2.1053 \\
		-0.3& 0.3 & 1.4290 & 0.1201 & 1.2362 & 1.4292 & 0.1221 & 1.2282 \\
		-0.1& 0.3 & 1.4143 & 1.0305 & 0.4081 & 1.4126 & 1.0228 & 0.4080 \\
		0.1 & 0.3 & 1.2978 & 1.3627 & 0.2551 & 1.2974 & 1.3796 & 0.2532 \\
		0.3 & 0.3 & 1.3887 & 0.2825 & 0.7314 & 1.3884 & 0.2961 & 0.7138 \\
		0.5 & 0.3 & 1.3890 & 0.1353 & 1.0574 & 1.3886 & 0.1444 & 1.0225 \\
		
		-0.5& 0.4 & 1.4171 & 0.0761 & 1.5122 & 1.4169 & 0.0786 & 1.4872 \\
		-0.3& 0.4 & 1.4364 & 0.2507 & 0.8715 & 1.4366 & 0.2539 & 0.8665 \\
		-0.1& 0.4 & 1.3539 & 1.4238 & 0.2966 & 1.3514 & 1.4054 & 0.2965 \\
		0.1 & 0.4 & 1.2358 & 1.5370 & 0.1902 & 1.2346 & 1.5465 & 0.1887 \\
		0.3 & 0.4 & 1.3801 & 0.4691 & 0.5550 & 1.3797 & 0.4879 & 0.5436 \\
		0.5 & 0.4 & 1.3844 & 0.2119 & 0.8351 & 1.3839 & 0.2244 & 0.8103 \\
		\bottomrule
	\end{tabular}
\end{table}
\begin{table}[ht!]
\centering
\caption{Performance of  exponential distribution with  $h(p)=-p\log p$}
	\begin{tabular}{cc|ccc|ccc}
		\toprule
		\multicolumn{1}{c}{\multirow{2}{*}{$\mu$}}&\multicolumn{1}{c|}{\multirow{2}{*}{$\sigma$}}&\multicolumn{3}{c|}{$\Phi_h$}&\multicolumn{3}{c}{$\log\Phi_h$}\\ \cline{3-8}
		 & & Mean & Variance & Sharpe ratio & Mean & Variance & Sharpe ratio\\ \hline
		-0.5& 0.1 & 1.2501 & 0.0033 & 4.3463 & 1.3914 & 0.0051 & 5.4729 \\
		-0.3& 0.1 & 1.3228 & 0.0096 & 3.3001 & 1.3625 & 0.0115 & 3.3737 \\
		-0.1& 0.1 & 1.2750 & 0.0452 & 1.2934 & 1.2788 & 0.0469 & 1.2868 \\
		0.1 & 0.1 & 1.2764 & 0.1619 & 0.6867 & 1.2623 & 0.1694 & 0.6373 \\
		0.3 & 0.1 & 1.3939 & 0.0519 & 1.7287 & 1.3793 & 0.0906 & 1.2601 \\
		0.5 & 0.1 & 1.3962 & 0.0377 & 2.0408 & 1.3884 & 0.0849 & 1.3328 \\
		
		-0.5& 0.2 & 1.2590 & 0.0133 & 2.2488 & 1.3940 & 0.0204 & 2.7564 \\
		-0.3& 0.2 & 1.3274 & 0.0392 & 1.6534 & 1.3665 & 0.0473 & 1.6858 \\
		-0.1& 0.2 & 1.2027 & 0.1059 & 0.6229 & 1.1990 & 0.1049 & 0.6114 \\
		0.1 & 0.2 & 1.2645 & 0.5390 & 0.3602 & 1.2556 & 0.5358 & 0.3492 \\
		0.3 & 0.2 & 1.3791 & 0.1694 & 0.9211 & 1.3666 & 0.2282 & 0.7675 \\
		0.5 & 0.2 & 1.3856 & 0.1140 & 1.1421 & 1.3776 & 0.1887 & 0.8693 \\
		
		-0.5& 0.3 & 1.2706 & 0.0314 & 1.5271 & 1.3960 & 0.0475 & 1.8165 \\
		-0.3& 0.3 & 1.3277 & 0.0893 & 1.0964 & 1.3665 & 0.1083 & 1.1139 \\
		-0.1& 0.3 & 1.0972 & 0.0763 & 0.3521 & 1.0851 & 0.0680 & 0.3261 \\
		0.1 & 0.3 & 1.2686 & 1.0588 & 0.2610 & 1.2679 & 1.0534 & 0.2610 \\
		0.3 & 0.3 & 1.3686 & 0.3034 & 0.6691 & 1.3618 & 0.3311 & 0.6288 \\
		0.5 & 0.3 & 1.3783 & 0.1927 & 0.8616 & 1.3726 & 0.2439 & 0.7545 \\
		
		-0.5& 0.4 & 1.2846 & 0.0597 & 1.1643 & 1.3972 & 0.0879 & 1.3396 \\
		-0.3& 0.4 & 1.3138 & 0.1512 & 0.8069 & 1.3488 & 0.1828 & 0.8157 \\
		-0.1& 0.4 & 1.0129 & 0.0390 & 0.0653 & 0.9962 & 0.0390 & -0.0192\\
		0.1 & 0.4 & 1.2869 & 1.7818 & 0.2150 & 1.2973 & 1.8495 & 0.2186 \\
		0.3 & 0.4 & 1.3610 & 0.4404 & 0.5440 & 1.3614 & 0.4285 & 0.5521 \\
		0.5 & 0.4 & 1.3731 & 0.2684 & 0.7203 & 1.3711 & 0.2797 & 0.7016 \\
		\bottomrule
	\end{tabular}
\end{table}
\begin{table}[ht!]
\centering
\caption{Performance of uniform distribution with  $h(p)=p-p^2$}\label{table:uniform}
	\begin{tabular}{cc|ccc|ccc}
		\toprule
		\multicolumn{1}{c}{\multirow{2}{*}{$\mu$}}&\multicolumn{1}{c|}{\multirow{2}{*}{$\sigma$}}&\multicolumn{3}{c|}{$\Phi_h$}&\multicolumn{3}{c}{$\log\Phi_h$}\\ \cline{3-8}
		 & & Mean & Variance & Sharpe ratio & Mean & Variance & Sharpe ratio\\ \hline
		-0.5& 0.1 & 1.4057 & 0.0035 & 6.8631 & 1.4057 & 0.0038 & 6.5978 \\
		-0.3& 0.1 & 1.4077 & 0.0107 & 3.9474 & 1.4077 & 0.0109 & 3.8992 \\
		-0.1& 0.1 & 1.3663 & 0.0719 & 1.3657 & 1.3663 & 0.0722 & 1.3637 \\
		0.1 & 0.1 & 1.2843 & 0.1128 & 0.8465 & 1.2846 & 0.1160 & 0.8356 \\
		0.3 & 0.1 & 1.3873 & 0.0200 & 2.7362 & 1.3877 & 0.0240 & 2.5044 \\
		0.5 & 0.1 & 1.3953 & 0.0096 & 4.0327 & 1.3956 & 0.0137 & 3.3803 \\
		
		-0.5& 0.2 & 1.4130 & 0.0145 & 3.4269 & 1.4130 & 0.0156 & 3.3090 \\
		-0.3& 0.2 & 1.4206 & 0.0457 & 1.9682 & 1.4207 & 0.0467 & 1.9465 \\
		-0.1& 0.2 & 1.3931 & 0.3254 & 0.6892 & 1.3932 & 0.3263 & 0.6882 \\
		0.1 & 0.2 & 1.2788 & 0.4258 & 0.4272 & 1.2792 & 0.4378 & 0.4220 \\
		0.3 & 0.2 & 1.3823 & 0.0813 & 1.3407 & 1.3831 & 0.0964 & 1.2342 \\
		0.5 & 0.2 & 1.3926 & 0.0389 & 1.9901 & 1.3933 & 0.0540 & 1.6929 \\
		
		-0.5& 0.3 & 1.4215 & 0.0346 & 2.2657 & 1.4215 & 0.0369 & 2.1937 \\
		-0.3& 0.3 & 1.4363 & 0.1118 & 1.3050 & 1.4364 & 0.1140 & 1.2921 \\
		-0.1& 0.3 & 1.4274 & 0.8438 & 0.4653 & 1.4274 & 0.8459 & 0.4647 \\
		0.1 & 0.3 & 1.2795 & 0.9259 & 0.2905 & 1.2800 & 0.9460 & 0.2879 \\
		0.3 & 0.3 & 1.3803 & 0.1941 & 0.8631 & 1.3812 & 0.2239 & 0.8058 \\
		0.5 & 0.3 & 1.3914 & 0.0950 & 1.2702 & 1.3923 & 0.1257 & 1.1066 \\
		
		-0.5& 0.4 & 1.4314 & 0.0661 & 1.6786 & 1.4314 & 0.0701 & 1.6300 \\
		-0.3& 0.4 & 1.4550 & 0.2196 & 0.9711 & 1.4550 & 0.2234 & 0.9628 \\
		-0.1& 0.4 & 1.4707 & 1.7666 & 0.3542 & 1.4707 & 1.7701 & 0.3538 \\
		0.1 & 0.4 & 1.2862 & 1.6430 & 0.2233 & 1.2863 & 1.6454 & 0.2232 \\
		0.3 & 0.4 & 1.3811 & 0.3868 & 0.6127 & 1.3818 & 0.4203 & 0.5889 \\
		0.5 & 0.4 & 1.3917 & 0.1937 & 0.8899 & 1.3925 & 0.2365 & 0.8071 \\
		\bottomrule
	\end{tabular}
\end{table}

For  different values of $\mu$ and $\sigma$,  we take means of every 100 terminal wealth for different $h$ to show the tendency of the expectation of terminal wealth in Figures \ref{logtend} and  \ref{nologtend}, respectively.  
We find that the algorithm performs more significantly as $|\mu|$ increases or as $\sigma$ decreases with other parameters fixed. When $\mu<0$, exponential distribution seems to be underperforming, but in fact after enough iterations,  the sample mean will still fluctuate around $1.4$. In addition, when $|\mu|$ is small and $\sigma$ is large relatively, the performance is  bad. This is because larger $\sigma$ reflects higher level of randomness of the environment, and at this time the significance of exploration becomes smaller.

  The performance under different $\lambda$ with  Gaussian is shown in Figures \ref{lambdalog} and \ref{lambdanolog}.  We can see  that when $\rho^2$ is  relatively larger, $\lambda$ has a more significant impact on algorithm performance under regularizer $\Phi_h$ than $\log\Phi_h$. This is consistent with Remark \ref{costremark}. Finally, we show one sample trajectory of $u_{t_i}$ under different $h$ in Figure \ref{sampledistribution}. It is clearly  from Figure \ref{sampledistribution} that the trajectories of $u_{t_i}$ under different regularizer are different, and the data from exponential distribution is more spread out compared to the normal and uniform distributions. In particular, most data of exponential distribution are small while some data are very large, which may be the reason why exponential distribution sometimes underperforms. Since our parameters and target settings are the same as those in \cite{WZ20}, we can see that   our RL algorithm   based on Choquet regularizations and logarithmic Choquet regularizers perform on par with the one  in \cite{WZ20}.  Compared with the results  that  Gaussian is always the optimal  in  \cite{WZ20}, the availability of a large class of Choquet regularizers makes it possible to choose specific regularizers  to achieve certain objective  used exploratory samplers such as exponential, uniform and Gaussian.

 \begin{figure}[htbp!]
\centering
\begin{minipage}[t]{0.48\textwidth}
\centering
\includegraphics[width=8cm]{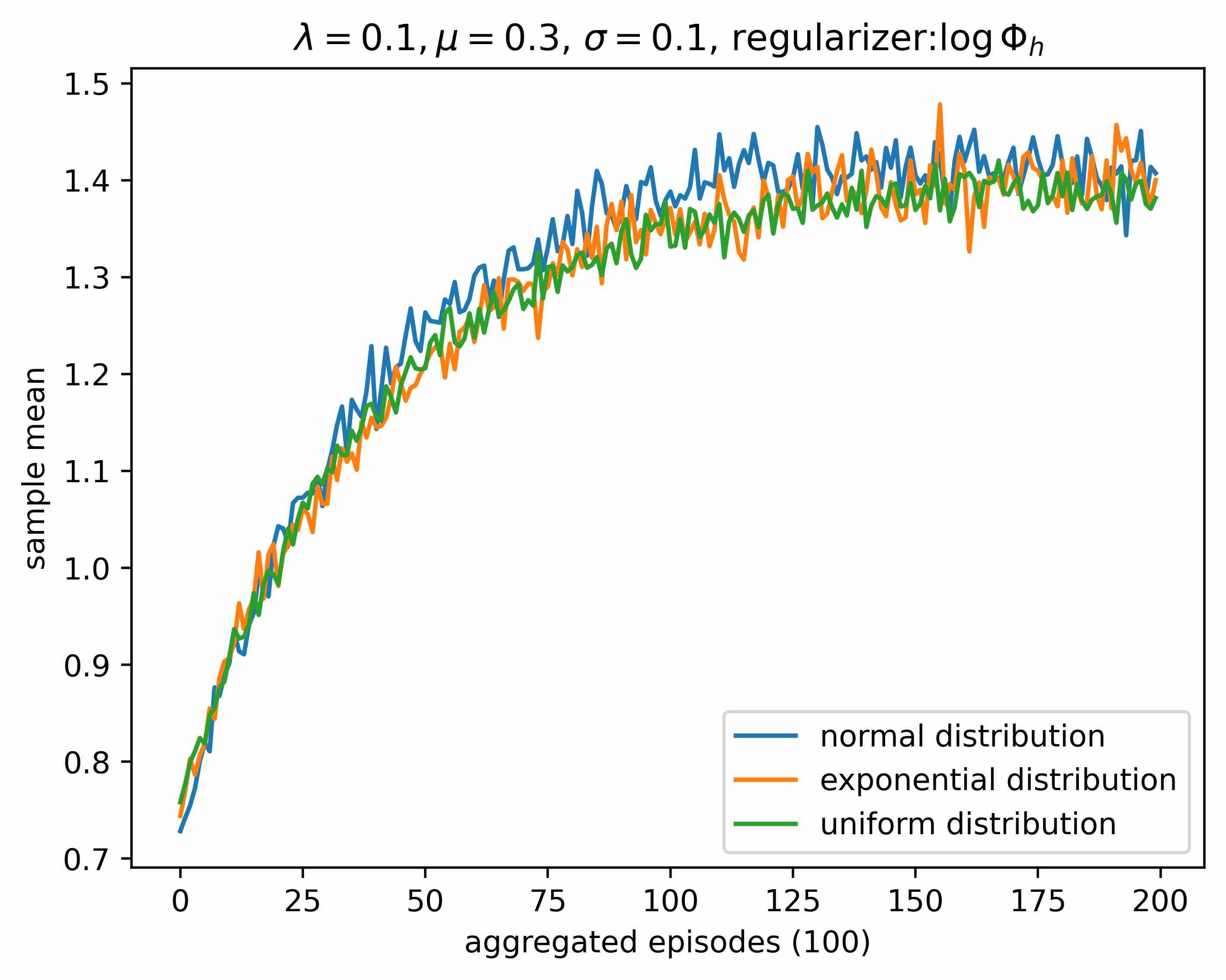}
\end{minipage}
\begin{minipage}[t]{0.48\textwidth}
\centering
\includegraphics[width=8cm]{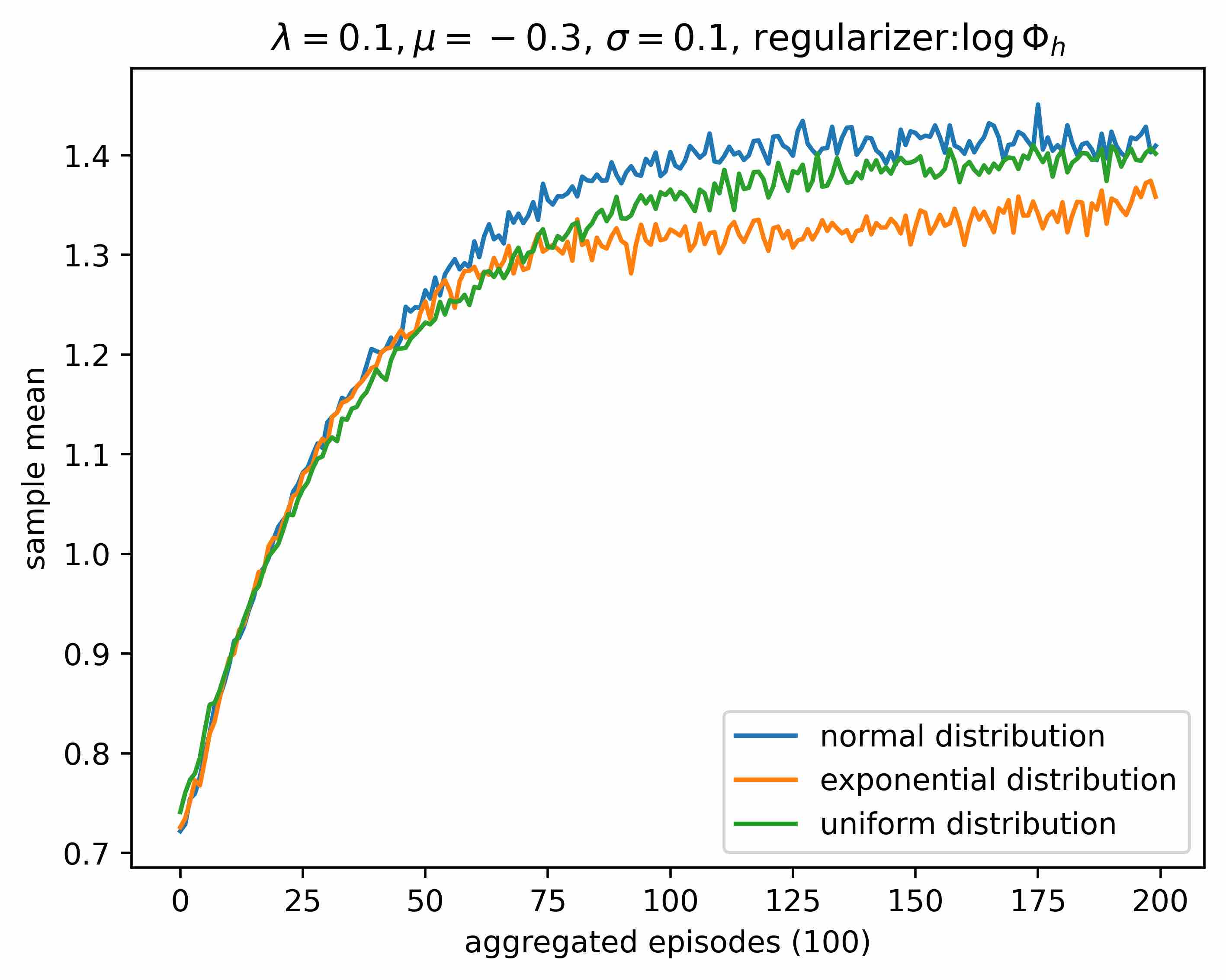}
\end{minipage}
 \quad   
 \begin{minipage}[t]{0.48\textwidth}
\centering
\includegraphics[width=8cm]{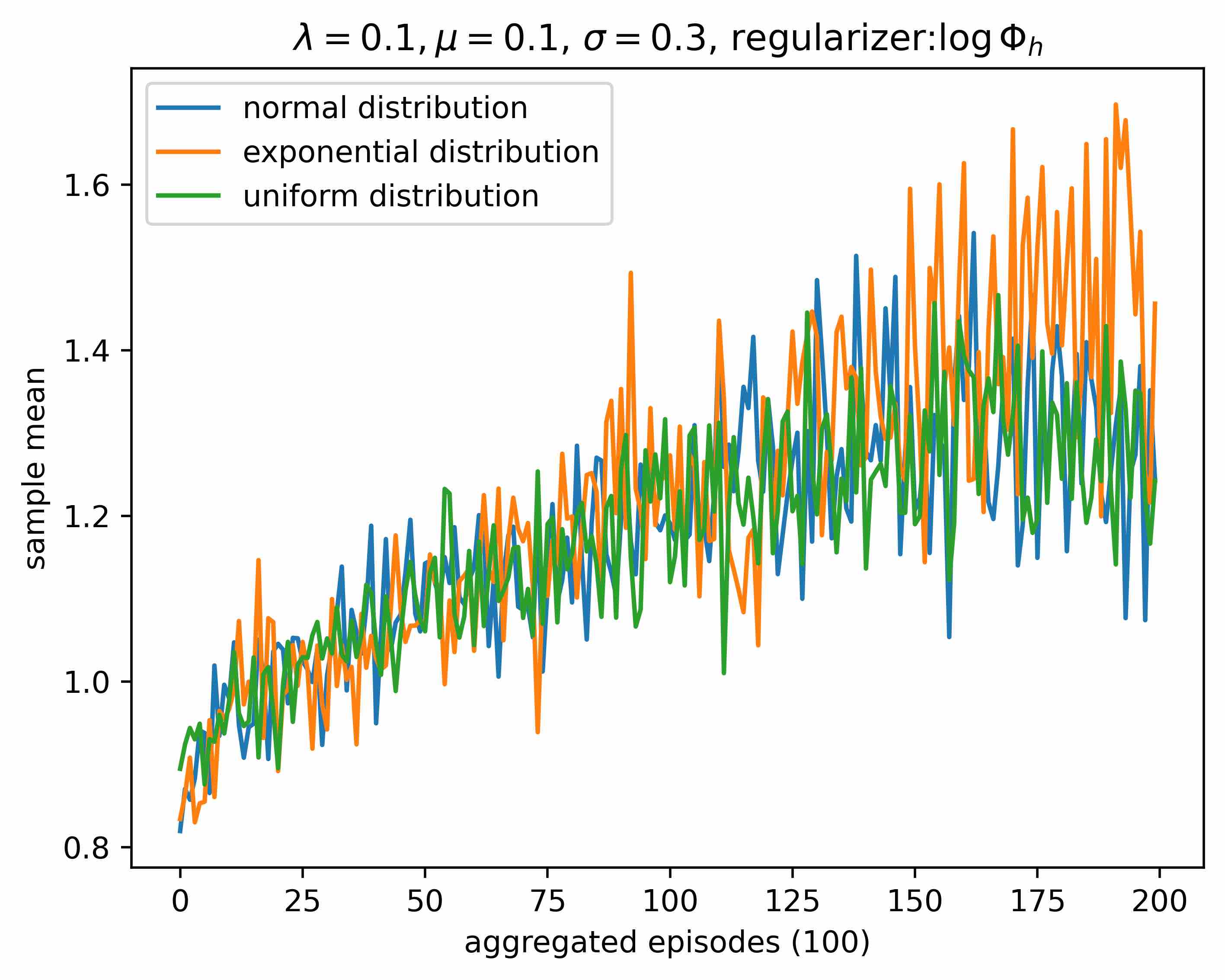}
\end{minipage}
\begin{minipage}[t]{0.48\textwidth}
\centering
\includegraphics[width=8cm]{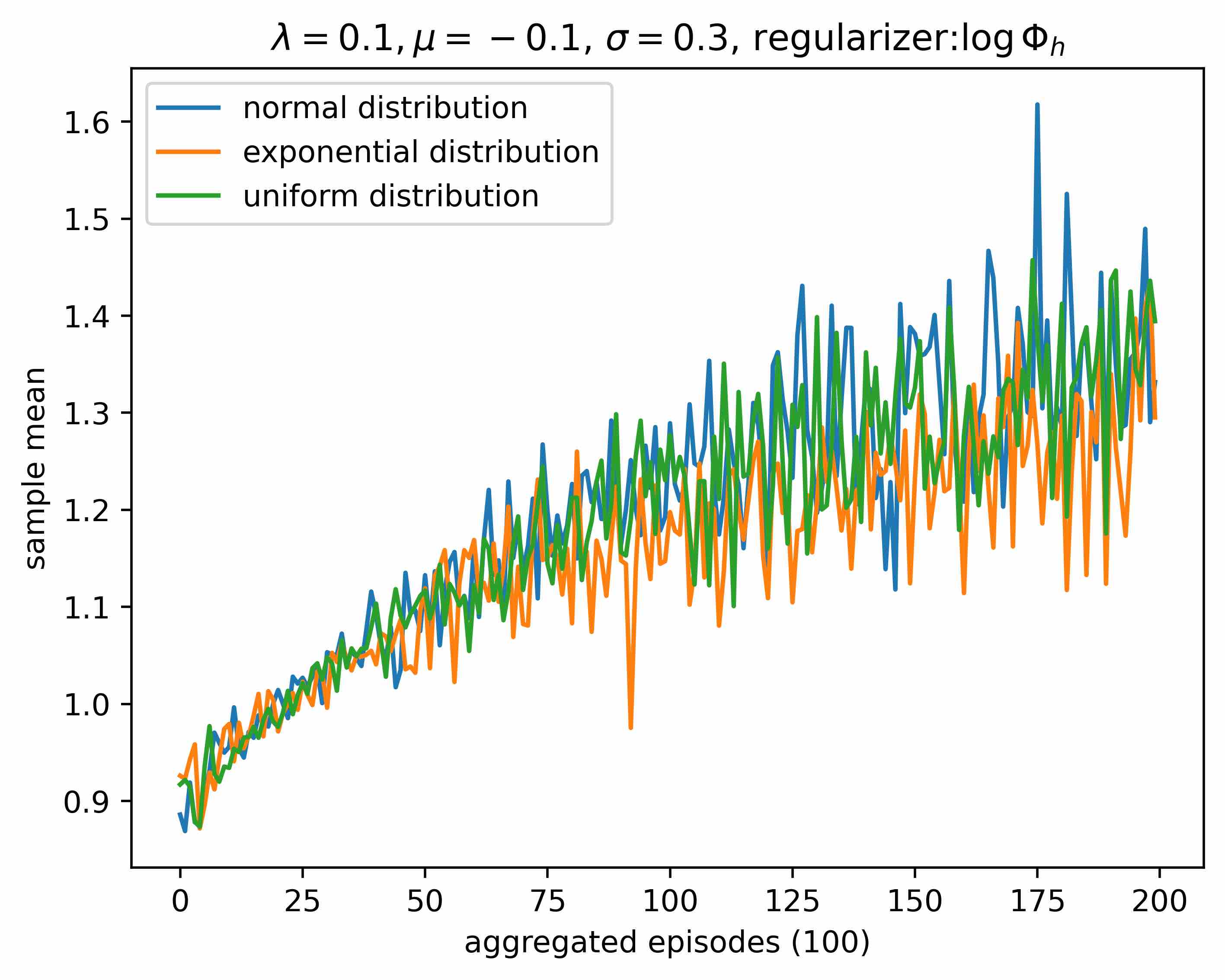}
\end{minipage}
\caption{The effect of $\mu$ and $\sigma$ on the exploration for the regularizer $\log\Phi_h$}\label{logtend}

\end{figure}
 
 \begin{figure}[htbp!]
\centering
\begin{minipage}[t]{0.48\textwidth}
\centering
\includegraphics[width=8cm]{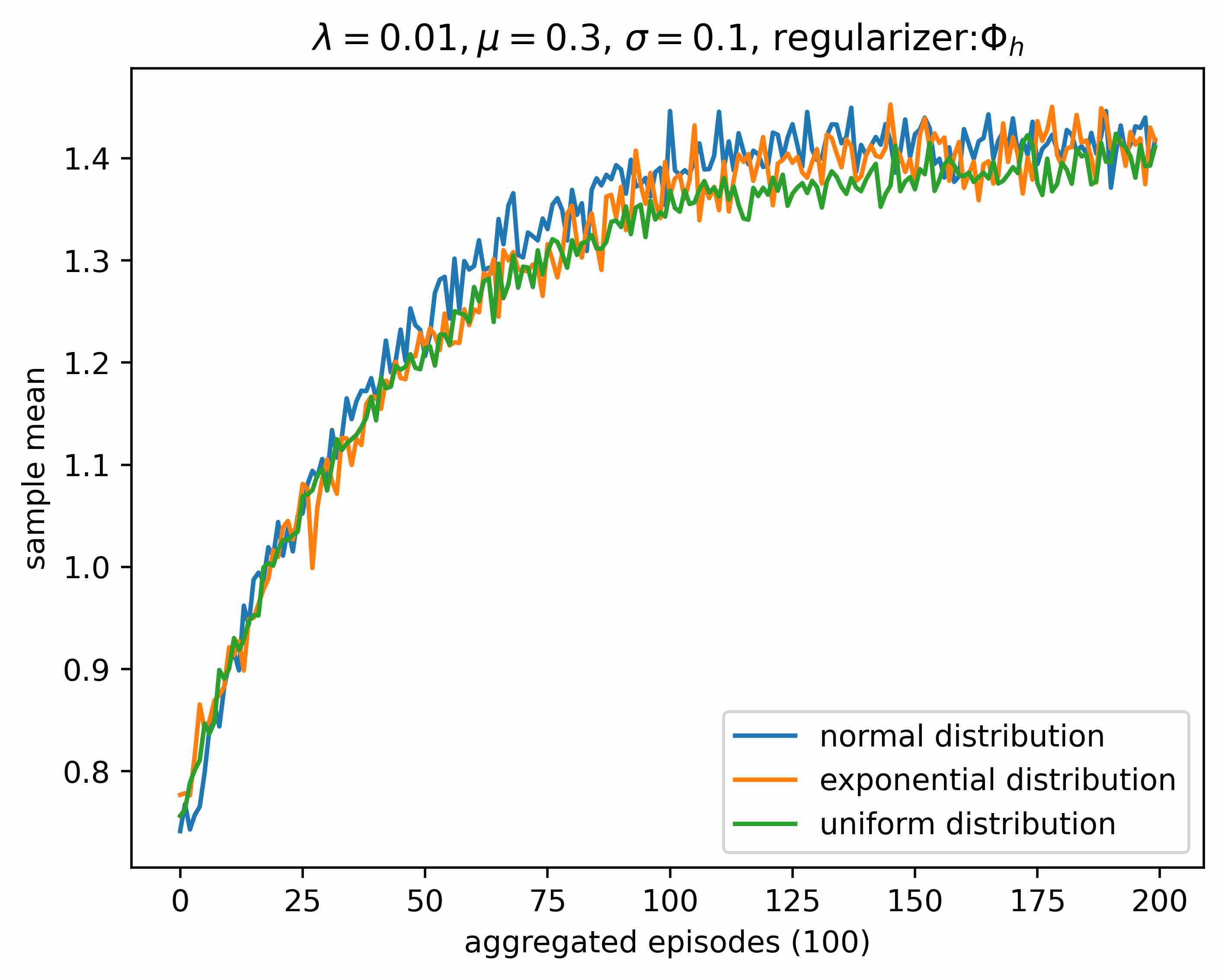}
\end{minipage}
\begin{minipage}[t]{0.48\textwidth}
\centering
\includegraphics[width=8cm]{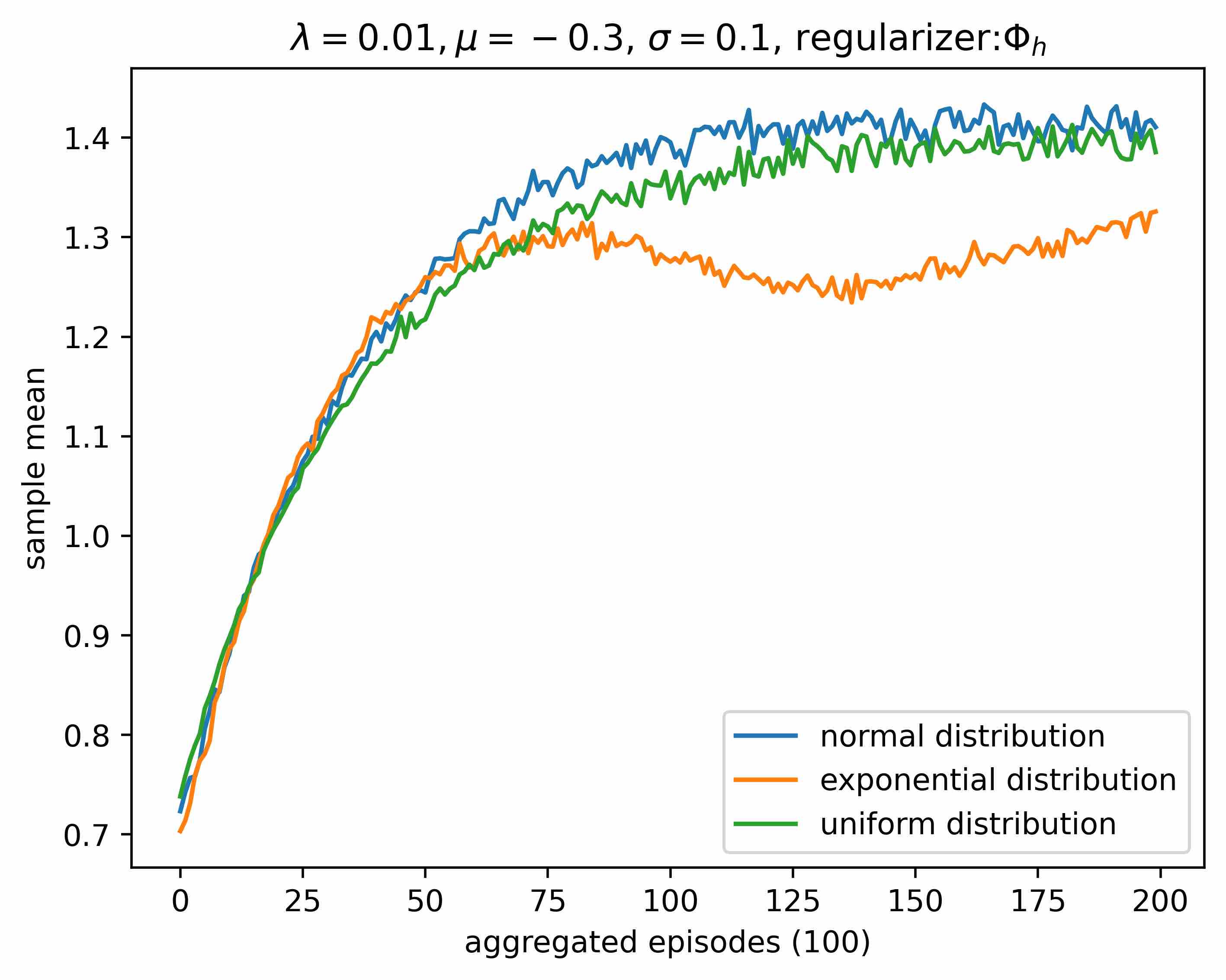}
\end{minipage}
 \quad   
 \begin{minipage}[t]{0.48\textwidth}
\centering
\includegraphics[width=8cm]{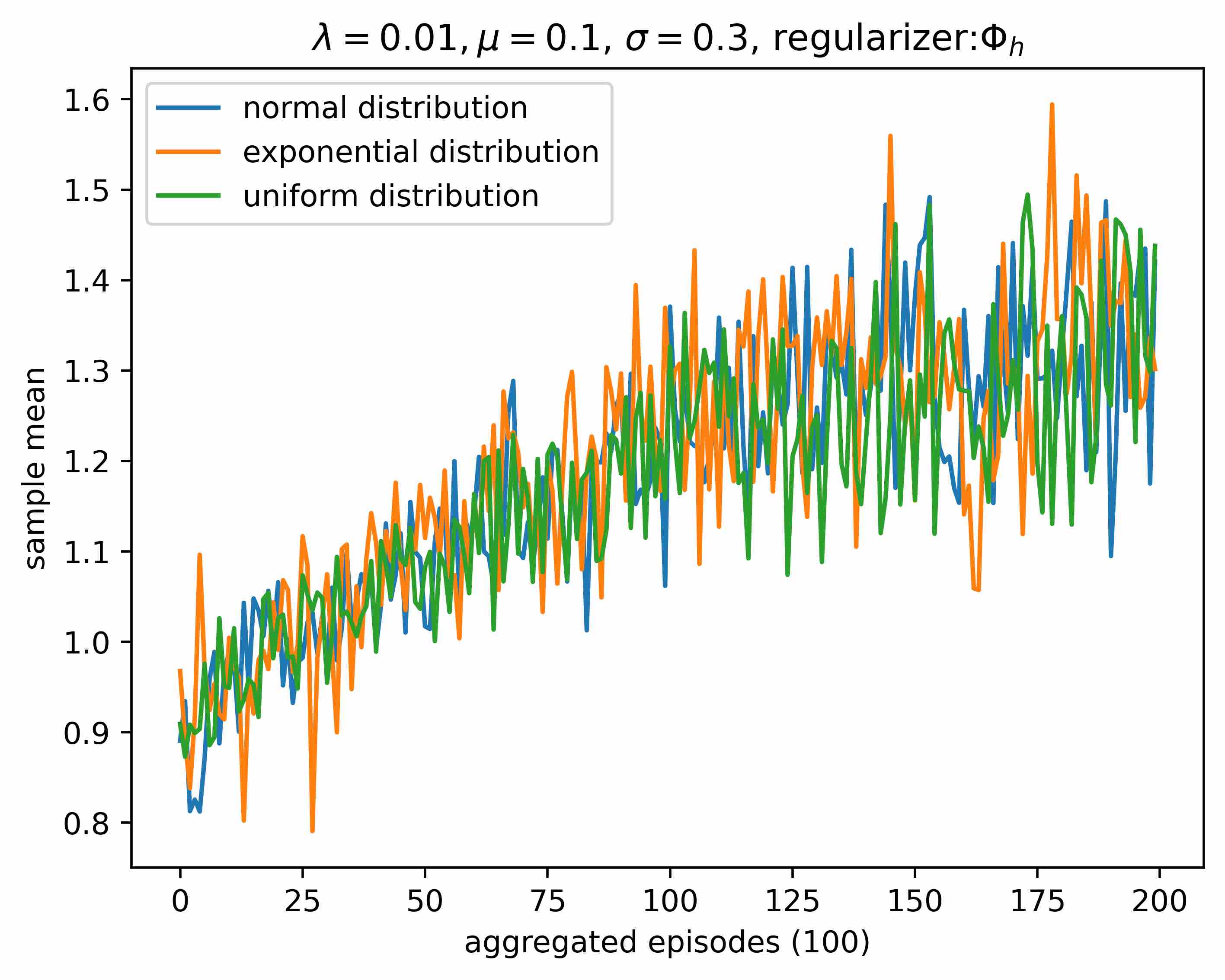}
\end{minipage}
\begin{minipage}[t]{0.48\textwidth}
\centering
\includegraphics[width=8cm]{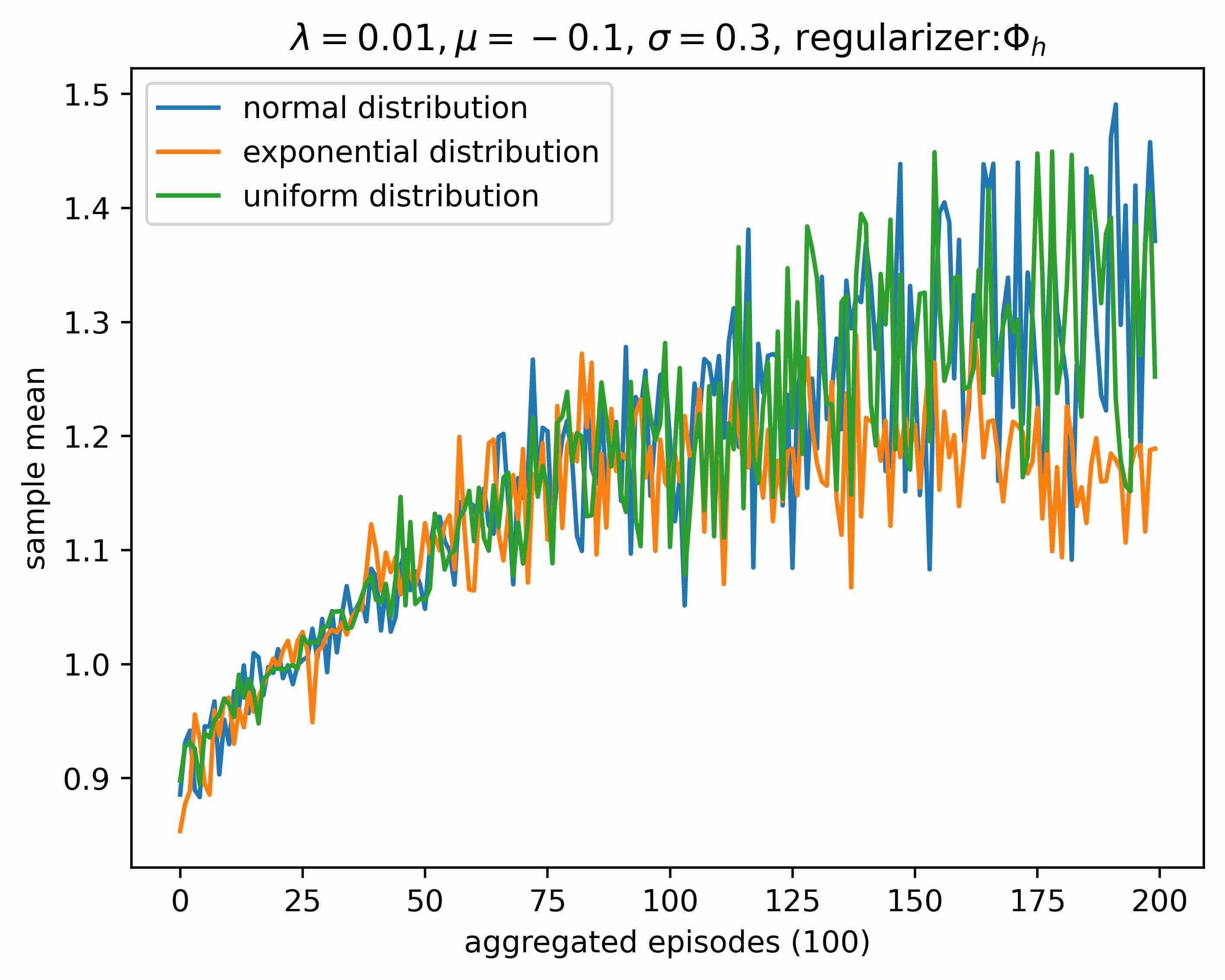}
\end{minipage}
\caption{The effect of $\mu$ and $\sigma$ on the exploration for the regularizer $\Phi_h$}
\label{nologtend}
\end{figure}

 \begin{figure}[htbp!]
\centering
\begin{minipage}[t]{0.48\textwidth}
\centering
\includegraphics[width=8cm]{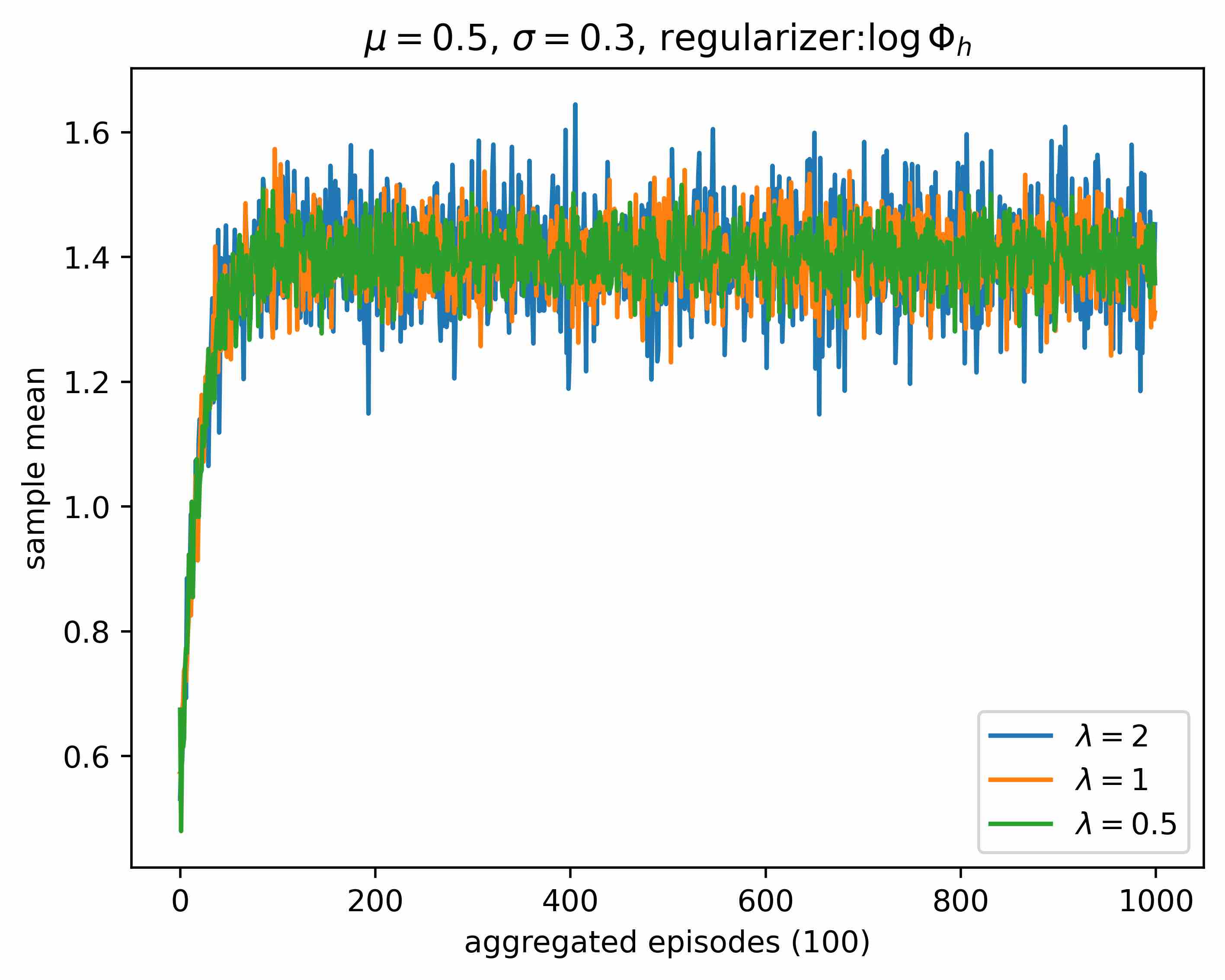}
\end{minipage}
\begin{minipage}[t]{0.48\textwidth}
\centering
\includegraphics[width=8cm]{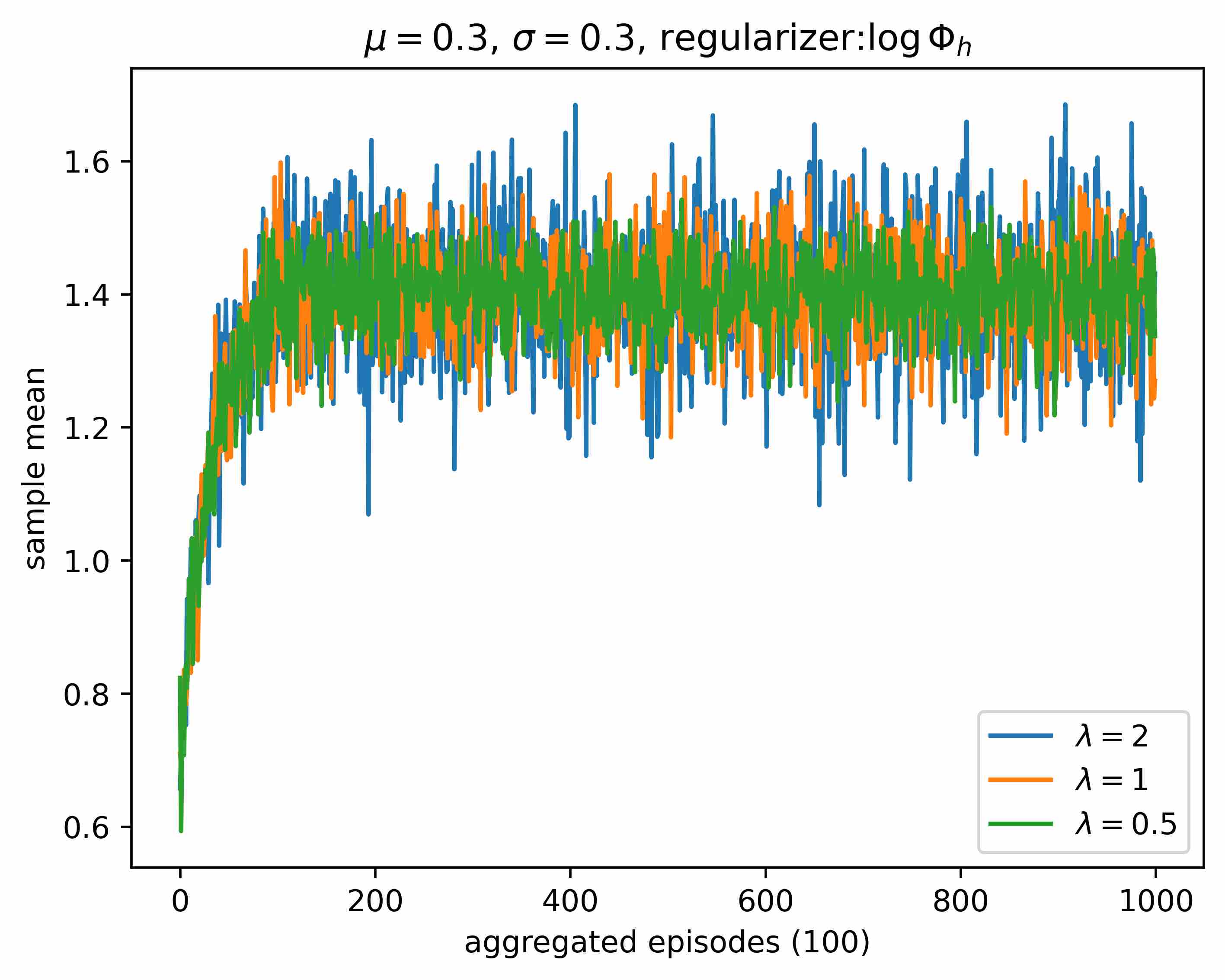}
\end{minipage}
 \quad   
 \begin{minipage}[t]{0.48\textwidth}
\centering
\includegraphics[width=8cm]{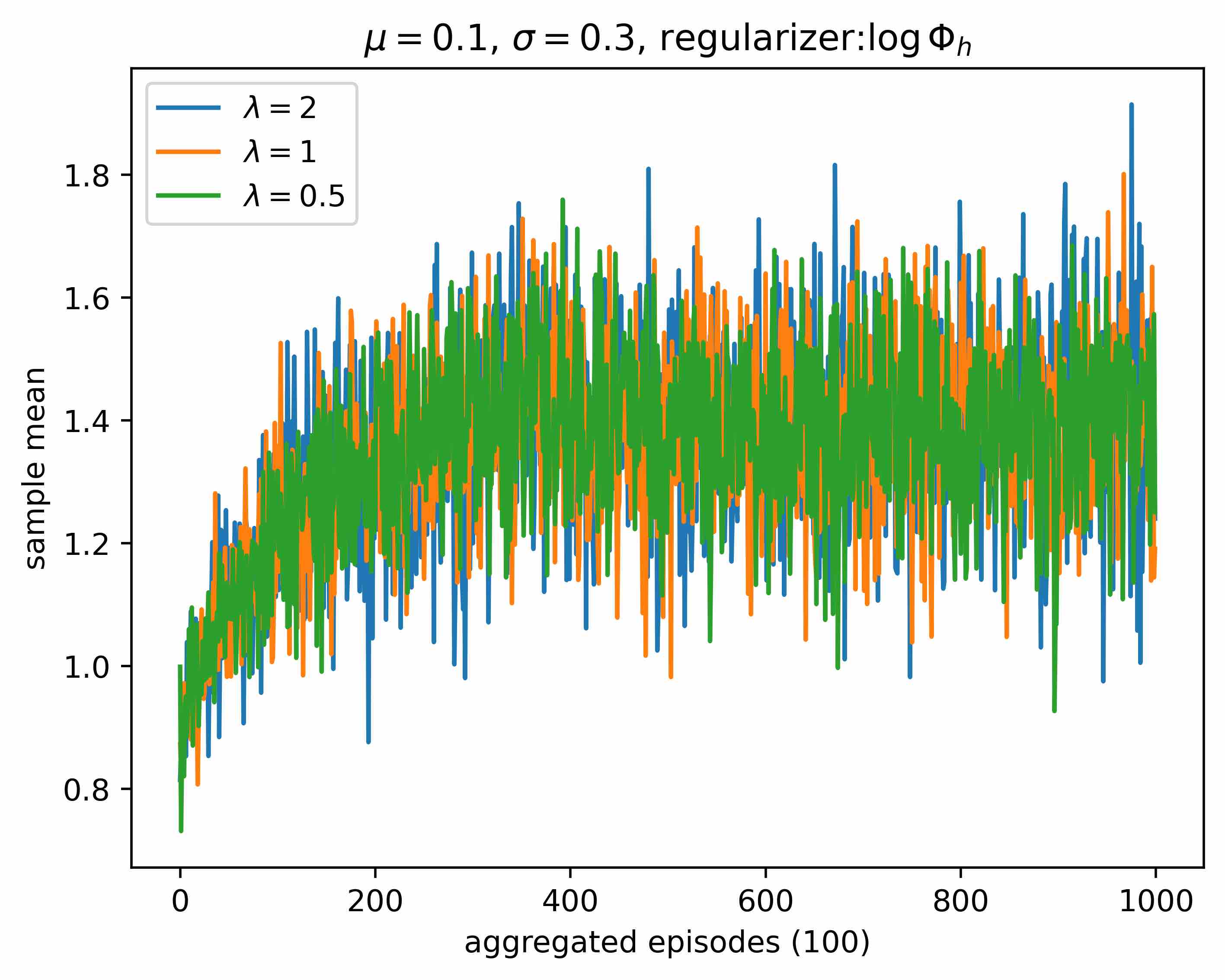}
\end{minipage}
\caption{The effect of  $\lambda$ on the exploration for the regularizer $\log\Phi_h$}
\label{lambdalog}
\end{figure}

\begin{figure}[htbp!]
\centering
\begin{minipage}[t]{0.48\textwidth}
\centering
\includegraphics[width=8cm]{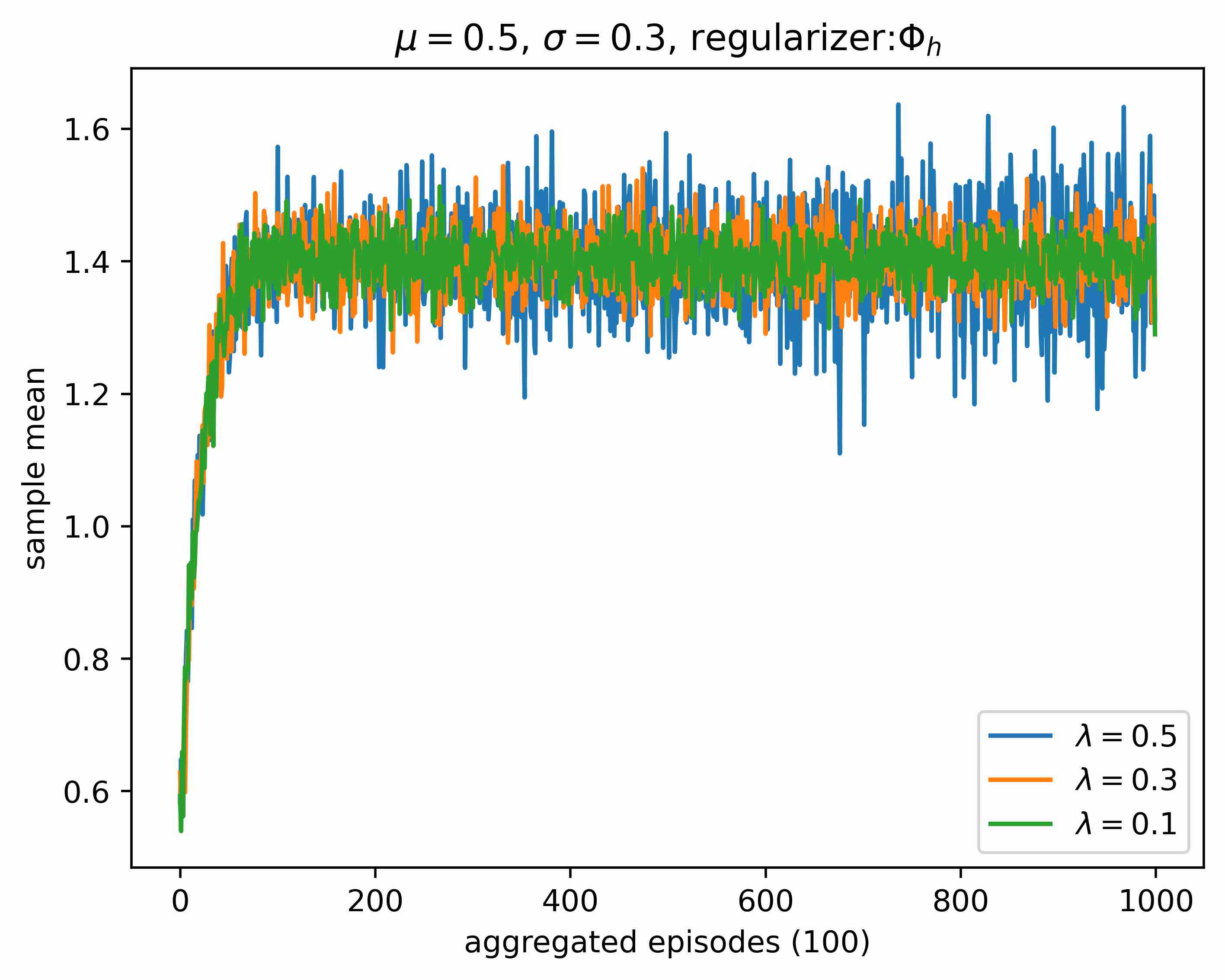}
\end{minipage}
\begin{minipage}[t]{0.48\textwidth}
\centering
\includegraphics[width=8cm]{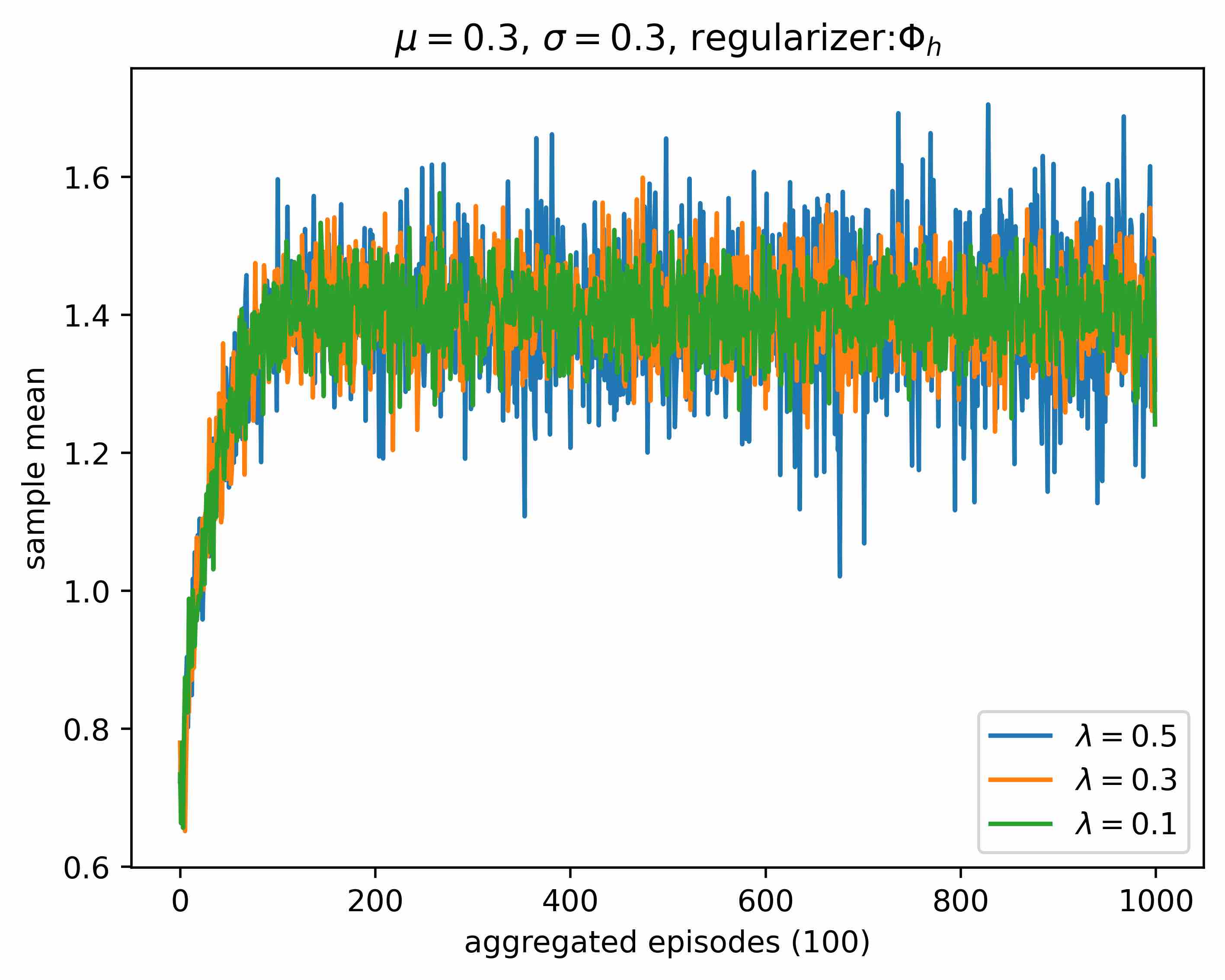}
\end{minipage}
 \quad   
 \begin{minipage}[t]{0.48\textwidth}
\centering
\includegraphics[width=8cm]{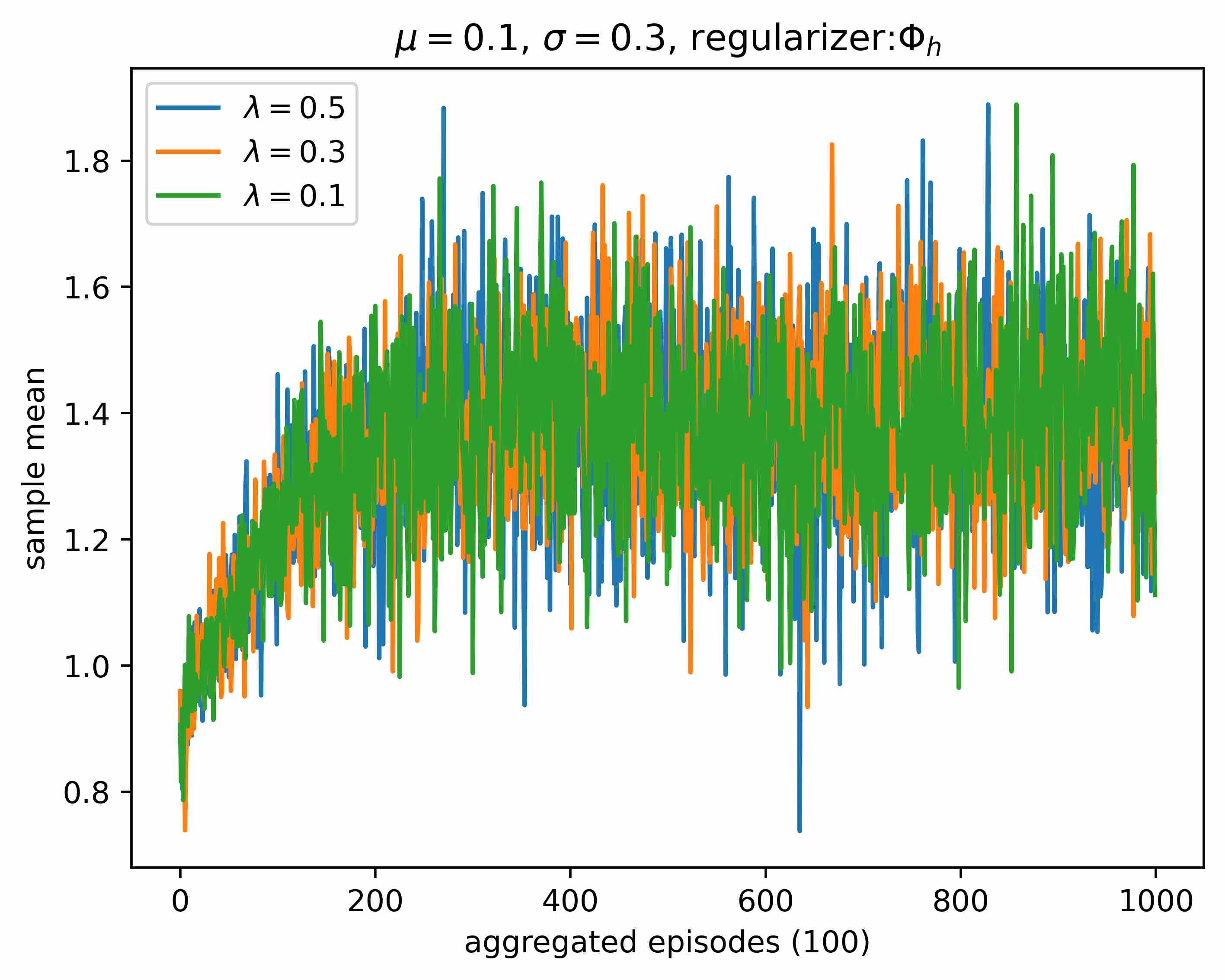}
\end{minipage}
\caption{The effect of  $\lambda$ on the exploration for the regularizer $\Phi_h$}
\label{lambdanolog}
\end{figure}

\begin{figure}[htbp!]
\centering
\begin{minipage}[t]{0.48\textwidth}
\centering
\includegraphics[width=8cm]{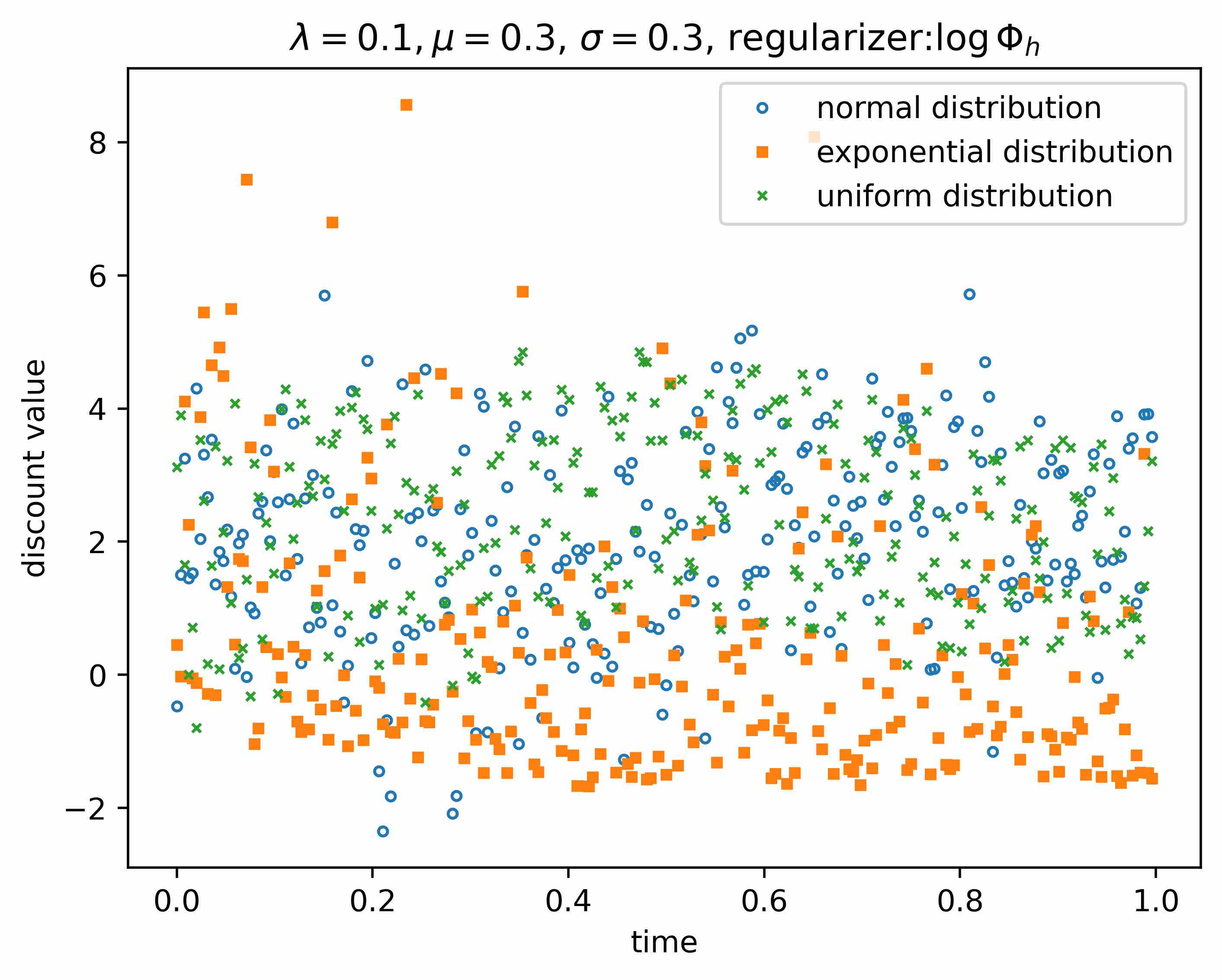}
\end{minipage}
\begin{minipage}[t]{0.48\textwidth}
\centering
\includegraphics[width=8cm]{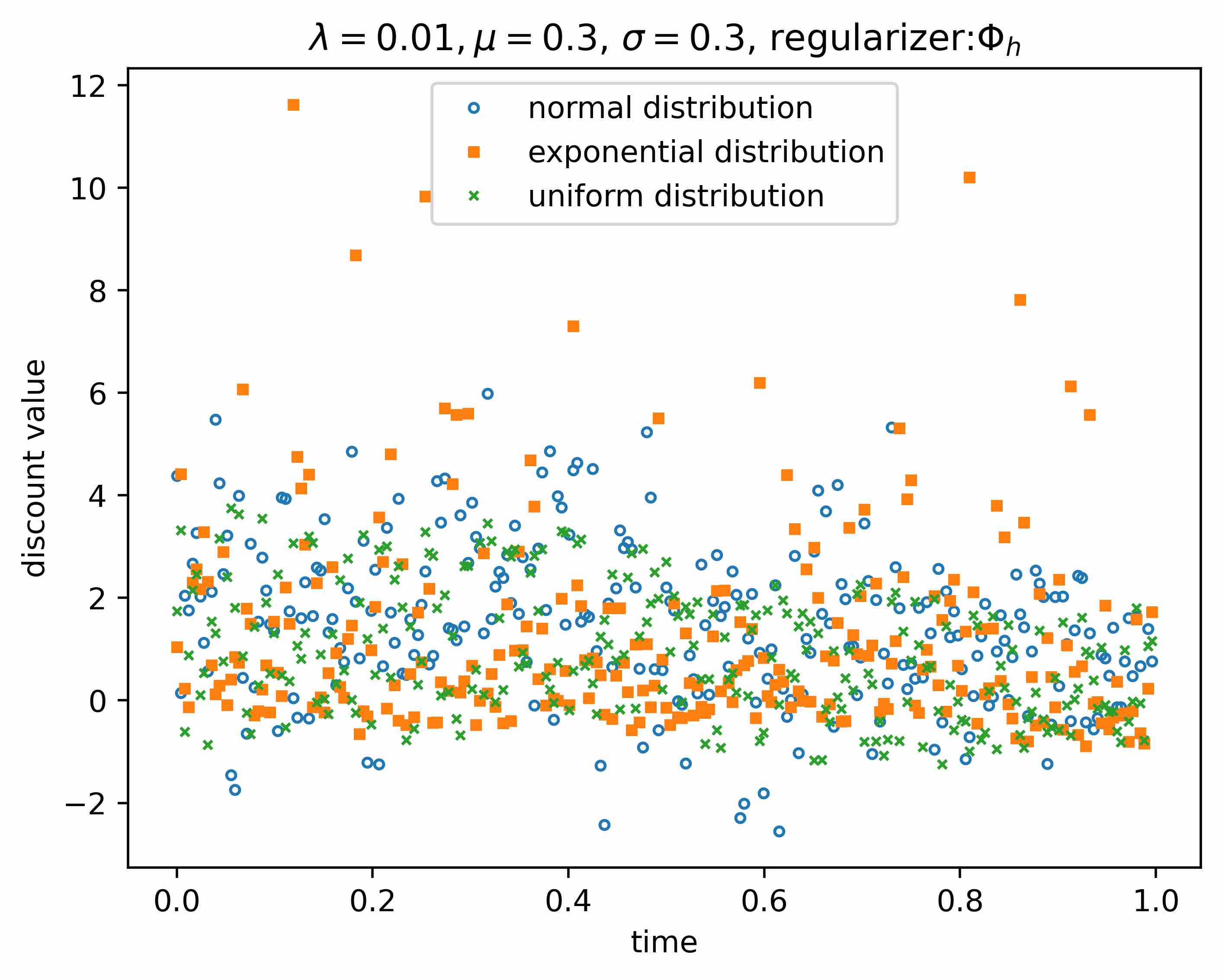}
\end{minipage} 
\caption{Samples of $u_{t_i}$ for the regularizer $\log\Phi_h$  and $\Phi_h$}
\label{sampledistribution}
\end{figure}

\section{CONCLUSION}\label{sec:7}

For the first time, we applied the Choquet-regularized continuous-time RL framework proposed by \cite{HWZ23} to practical problems. We studied the MV problem under Choquet regularization and its logarithmic form. Several different optimal exploration distributions of different $h$ were given, and when  $\Vert h'\Vert_2$ is fixed, the optimal exploration distributions have the same mean and variance. Unlike the infinite time horizon results in \cite{HWZ23},  the variance decreases over time in the finite time horizon problem. At the same time, the mean of the optimal exploration distribution is related to the current state $x$ and independent of $\lambda$ and $h$, which is equal to the optimal action of the classical MV problem. The variance of the optimal exploration distribution is related to $\lambda$ and $h$ and independent of state $x$, and even independent of $h$ under logarithmic regularization. These also showed the perfect separation between exploitation and exploration in the mean and variance of the optimal  distributions as in \cite{WZ20} when entropy is used as a regularizer. 

Further, we have obtained that the two regularization problems converge to the traditional MV problem, and compared the exploration costs of the two regularizations. We found that the exploration cost under the logarithmic Choquet regularization is consistent with the exploration cost under the entropy regularization,  only related to $\lambda$ and time range $T$, while the exploration cost under Choquet regularization is also related to market parameters. Through simulation, we compared the two kinds of regularization. In general, when the market fluctuates greatly and the willingness to explore is not strong, the cost of Choquet regularization is lower. On the contrary, it may be better to use logarithmic Choquet regularizers  for regularization. 

There are still some open questions. First of all, we regard $\lambda$ as an exogenous variable. From the perspective of exploration cost, turning $\lambda$ into endogenous and changeable can help us better control the exploration cost. As time goes by, the information we obtain through exploration will also increase, so the willingness to explore will also change, which also implies the rationality of the changing $\lambda$ to time-related. Secondly, the current Choquet integral can only deal with one-dimensional action space, thus how to extend the Choquet regularizers  to multi-dimensional situations to adapt to more problems is still a challenging problem. We will study these issues in the future.

\vspace{0.7cm}
\noindent\textbf{Acknowledgements.}~~This work was supported by the National Natural Science Foundation of China (No. 11931018 and 12271274)

\appendix

\end{document}